\newcommand{\eps}{\varepsilon}
\newcommand{\pa}{\partial}
\newcommand{\na}{\nabla}
\newcommand{\R}{\mathbb{R}}
\newcommand{\PV}{\textnormal{P.V.}\,}
\newcommand{\dist}{{\rm dist}\, }
\def\BV{\mathrm{BV}}
\newcommand{\J}{\mathscr{J}}
\newcommand{\G}{\mathscr{G}}
\newcommand{\F}{\mathscr{F}}
\newcommand{\N}{{\mathcal{N}}}
\def\C{\mathcal C}
\def\H{\mathcal H}
\def\div{{\mathrm {div}}}
\def\vphi{\varphi}
\def\P{\phi_E^\eps}
\def\Pe{ \phi^\eps_{E_\eps}}
\theoremstyle{plain}
\numberwithin{equation}{section}
\newtheorem{theorem}{Theorem}[section]
\newtheorem{lemma}[theorem]{Lemma}
\newtheorem{definition}[theorem]{Definition}
\newtheorem{remark}[theorem]{Remark}
\newtheorem{proposition}[theorem]{Proposition}
\title{$\Gamma$-convergence of some nonlocal perimeters in bounded subsets of  $\R^n$ with general boundary conditions}
\author{Antoine Mellet\thanks{mellet@umd.edu. Partially supported by NSF Grant DMS-2009236.}  and Yijing Wu\thanks{yijingwu@umd.edu}}
\date{University of Maryland \\ Department of Mathematics \\ College Park, MD 20742  USA}
\begin{document}


\maketitle

\begin{abstract}
We establish the $\Gamma$-convergence of some energy functionals describing 
nonlocal attractive interactions in bounded domains. 
The interaction potential solves an elliptic equation (local or nonlocal) in the bounded domain and 
the primary interest of our results is to identify the effects that the boundary conditions imposed on the potential
 have on the limiting functional. 
We consider general Robin boundary conditions, which include Dirichlet and Neumann conditions as particular cases.
Depending on the order of the elliptic operator the limiting functional involves the usual perimeter or some fractional perimeter.

We also consider the $\Gamma$-convergence of a related energy functional combining the usual perimeter functional and the nonlocal repulsive interaction energy.

\end{abstract}
\medskip

\noindent{\bf Keywords:} Perimeters, Non-local perimeters, $\Gamma$-convergence, Interaction energy.
\medskip

28A75, 35J20, 49Q20

\section{Introduction}
Given a subset $\Omega$ of $\R^n$ (with possibly $\Omega=\R^n$), we consider  the 
nonlocal attractive interaction energy defined for a set $E\subset \Omega$ by
$$V_{K_\eps}(E)  =  - \int_{\Omega} \int_{\Omega}  K_\eps(x,y) \chi_{E} (x) \chi_E(y)\, dx\, dy $$
where $\chi_E$ denotes the characteristic function of the set $E$ and
for all $y\in\Omega$, the kernel $x\mapsto K_\eps(x,y)$ is the solution of 
\begin{equation}\label{eq:Keps0}
K_{\epsilon}+\eps^s(-\Delta)^{s/2} K_{\eps}=\delta(x-y) \quad \mbox{ in } \Omega 
\end{equation}
with $s\in(0,2]$.
Our main interest is when $\Omega \neq \R^n$ and \eqref{eq:Keps0} is supplemented by boundary conditions - 
we will consider general Robin boundary conditions, which includes both Neumann and Dirichlet conditions as particular cases. 
Indeed, while the asymptotic behavior of such functionals  is a classical problem when $K_\eps(x,y) = K_\eps(x-y)$, it does not seem that the effects of the boundary conditions on $\pa\Omega$ (or in $\mathcal C\Omega$ when $s<2$) have been previously identified.

\medskip

We immediately notice that $K_\eps \to \delta$ when $\eps\to0$ and so $V_{K_\eps}(E) \to - |E|$. Since the applications we have in mind have a volume constraint, we are interested in the behavior or $V_{K_\eps}(E) + |E|$ when $\eps\ll1$. 
When $\Omega=\R^n$, we have $K_\eps(x,y) = \eps^{-n} K\left(\frac{x-y}{\eps}\right)$, and this functional   is related to the nonlocal perimeters:
\begin{align}
V_{K_\eps}(E) + |E|& =  \int_{\R^n}\chi _{E} ( 1- {K_\eps} * \chi_E )\, dx
 =  P_{{K_\eps}}(E)\label{eq:VK}
\end{align}
with
$$ 
P_{K}(E) : = \int_E \int_{\R^n\setminus E} {K}(x-y)\, dy\, dx = \frac 1 2 \int_{\R^n}\int_{\R^n} {K}(x-y) |\chi_E(x)-\chi_E(y)|\, dy\, dx.
$$
An important case of such a functional is the $s$-perimeter, denoted $P_s$, which corresponds to 
 $K(x) = \frac{1}{|x|^{n+s}}$ with $s\in(0,1)$, see \cite{cafroq,cafval,figV,val13,Dipierro20}.


The asymptotic behavior of \eqref{eq:VK} when $\eps\ll1$ 
 is a classical topic of research and depends on the rate of decay of $K(x)$ for large $|x|$.
 In our setting, this decay is given by the following lemma (see for instance \cite{MW}):
 \begin{lemma}\label{lem:K}
The solution of \eqref{eq:Keps0} in $\R^n$ is of the form $K_\eps(x,y) = \eps^{-n} K\left(\frac{x-y}{\eps}\right)$ where
$K(z)= k(|z|)\geq 0$ satisfies, as $r\to\infty$:
\begin{equation}\label{eq:Ks}
k(r)\sim
\begin{cases}
 \frac{c_{n,s}}{r^{n+s}}&  \mbox{ if } s\in(0,2)\\[5pt]
c_{n} \frac{e^{-r}}{r^{\frac{n-1}{2}}} & \mbox{ if } s=2
\end{cases}
\end{equation}
with $c_{n,s} = \frac{2^s \Gamma\left(\frac{n+s}{2} \right)}{\pi^{n/2} |\Gamma\left(-\frac s2 \right)|}$.
\end{lemma}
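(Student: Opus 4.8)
The plan is to first reduce to the parameter-free kernel by scaling, then handle the sign and the two asymptotic regimes separately. Let $G$ solve $G+(-\Delta)^{s/2}G=\delta$ in $\R^n$. Since $(-\Delta)^{s/2}$ satisfies $(-\Delta)^{s/2}\big(G(\lambda\,\cdot)\big)(x)=\lambda^s\big((-\Delta)^{s/2}G\big)(\lambda x)$ and $\delta(x/\eps)=\eps^n\delta(x)$, one checks directly that $x\mapsto\eps^{-n}G\big(\tfrac{x-y}{\eps}\big)$ solves \eqref{eq:Keps0}; by uniqueness (clear on the Fourier side) this gives $K_\eps(x,y)=\eps^{-n}K\big(\tfrac{x-y}{\eps}\big)$ with $K=G$. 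On the Fourier side $\widehat G(\xi)=(1+|\xi|^s)^{-1}$, which is radial, so $G(z)=k(|z|)$ with $k$ real analytic on $(0,\infty)$; moreover $\int_{\R^n}G=\widehat G(0)=1$, so $G\in L^1(\R^n)$. For nonnegativity I would use subordination: from $(1+|\xi|^s)^{-1}=\int_0^\infty e^{-t}e^{-t|\xi|^s}\,dt$ and the fact that for every $t>0$ and $s\in(0,2]$ the function $e^{-t|\xi|^s}$ is the Fourier transform of a nonnegative kernel $p_t\ge0$ (the heat kernel if $s=2$, the isotropic $s$-stable density if $s\in(0,2)$), one obtains $G(z)=\int_0^\infty e^{-t}p_t(z)\,dt\ge0$.

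For $s=2$ the kernel is the classical Bessel potential and is explicit: up to a dimensional constant, $G(z)=c_n\,|z|^{-(n-2)/2}\,\mathcal{K}_{(n-2)/2}(|z|)$, where $\mathcal{K}_\nu$ is the modified Bessel function of the second kind. The asymptotics \eqref{eq:Ks} for $s=2$ then follow immediately from the standard expansion $\mathcal{K}_\nu(r)\sim\sqrt{\pi/(2r)}\,e^{-r}$ as $r\to\infty$.

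For $s\in(0,2)$ the decay is polynomial, created by the $|\xi|^s$ non-smoothness of $\widehat G$ at the origin, and I would read it off from the equation itself. Away from the origin $\delta=0$, so $G$ solves $G=-(-\Delta)^{s/2}G$ pointwise, i.e.
\[
k(|z|)=-c_{n,s}\,\PV\!\int_{\R^n}\frac{G(z)-G(w)}{|z-w|^{n+s}}\,dw \qquad(z\neq0),
\]
where the constant is precisely the normalizing constant of $(-\Delta)^{s/2}$, namely the $c_{n,s}$ in the statement. Granting an a priori tail bound $G(z)\le C(1+|z|)^{-n-s}$ (obtained either from the subordination formula together with the stable-kernel estimate $p_t(z)\le C_{n,s}\min\{t^{-n/s},\,t|z|^{-n-s}\}$, or by iterating the integral equation above), one splits the integral into $\{|w|\le|z|/2\}$, where $|z-w|\sim|z|$ and the contribution is $-c_{n,s}|z|^{-n-s}\int_{\R^n}G\cdot(1+o(1))=-c_{n,s}|z|^{-n-s}(1+o(1))$, and its complement, where the a priori bound forces an $o(|z|^{-n-s})$ error (the self-term $G(z)$ being likewise negligible). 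This yields $k(r)\sim c_{n,s}\,r^{-n-s}$.

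The step I expect to be the real work is the $s\in(0,2)$ case: one must establish the sharp-order a priori decay $G(z)=O(|z|^{-n-s})$ before the splitting argument can be justified, and then verify carefully that all error contributions are genuinely of lower order so that the constant comes out to be exactly $c_{n,s}$ — equivalently, in the subordination route, that $p_t(z)\sim c_{n,s}\,t\,|z|^{-n-s}$ as $|z|\to\infty$ and $\int_0^\infty te^{-t}\,dt=1$. Everything else (scaling, positivity, and the $s=2$ asymptotics) is standard.
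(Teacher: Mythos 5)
The paper does not actually prove this lemma (it is quoted from \cite{MW}), so there is no in-text argument to compare against; your sketch is, however, essentially the standard route: rescale to $G=(1+(-\Delta)^{s/2})^{-1}\delta$, get positivity and radiality from the subordination formula $G=\int_0^\infty e^{-t}p_t\,dt$, use Bessel-function asymptotics for $s=2$, and heavy-tail asymptotics for $s<2$. Your identification of the constant is correct in both variants: $c_{n,s}\int G=c_{n,s}$ in the equation route (since $\int G=\widehat G(0)=1$), and $p_t(z)\sim c_{n,s}\,t\,|z|^{-n-s}$ together with $\int_0^\infty te^{-t}\,dt=1$ in the subordination route, because the L\'evy measure of the semigroup generated by $-(-\Delta)^{s/2}$ with the normalization \eqref{eq:deflaps} is exactly $c_{n,s}|z|^{-n-s}\,dz$.

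Two points in the $s\in(0,2)$ argument need attention. First, a sign slip: the leading contribution of $\{|w|\le|z|/2\}$ to $-c_{n,s}\PV\int (G(z)-G(w))\,|z-w|^{-n-s}\,dw$ comes from the $+c_{n,s}\int G(w)|z-w|^{-n-s}\,dw$ piece and equals $+c_{n,s}|z|^{-n-s}(1+o(1))$, consistent with $k\ge 0$; your intermediate display carries the wrong sign even though the conclusion is right. Second, and more substantively, on the complementary region the pointwise bound $G\le C(1+|\cdot|)^{-n-s}$ is \emph{not} sufficient to force an $o(|z|^{-n-s})$ error, because that region contains the principal-value singularity at $w=z$: after symmetrizing, $\PV\int_{|w-z|\le 1}$ is controlled by the second-order difference $2G(z)-G(z+h)-G(z-h)$, so you additionally need decay of $D^2G$ (or of a H\"older seminorm of $\nabla G$) near $z$ at a rate beating $|z|^{-n-s}$. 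This is true -- differentiate the subordination formula and use derivative bounds for stable densities -- but it is an extra a priori estimate beyond the one you listed. Given that, the cleanest way to close the case $s\in(0,2)$ is the purely subordination-based route you mention only in passing: $p_t(z)=t^{-n/s}p_1(t^{-1/s}z)$ with $p_1(z)\sim c_{n,s}|z|^{-n-s}$ (Blumenthal--Getoor, P\'olya for $n=1$) and the uniform bound $p_1(z)\le C\min\{1,|z|^{-n-s}\}$ to justify dominated convergence in $G(z)=\int_0^\infty e^{-t}p_t(z)\,dt$; this yields both the sharp upper bound and the asymptotics at once and avoids the principal-value issue entirely.
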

In particular, when $s\in (1,2]$, we have  $|z|K(z)\in L^1(\R^n)$ and 
a classical result of D\'avila \cite{davila}  implies that
$$ 
 P_{{K_\eps}}(E) \sim \eps \sigma P(E)
$$ 
where $P$ denotes the usual perimeter, defined as the total variation of the characteristic function $\chi_E$:
\begin{align*}
P(E)  = \int_{\R^n} |D\chi_E|.
\end{align*}
We recall this result here for the reader's sake:
\begin{theorem}[\cite{davila,BBM}]\label{thm:per1}
Let $K_\eps = \eps^{-n} K(x/\eps)$ where $K(z)= k(|z|)\geq 0$ satisfies
$\int_0^\infty k(r)  r^n\, dr = k_0<\infty$. Then for every $u\in \BV(\R^n)$ with compact support, we have
$$ \lim_{\eps\to 0} \eps^{-1} \int_{\R^n}{\int_{\R^n} {K_{\eps}(x-y)|u(x)-u(y)|dx}dy}  = \sigma  \int_{\R^n} |Du| $$
with
$ \displaystyle\sigma = k_0 \int_{\partial B_1} |e\cdot x|d\H^{n-1}(x)$ (for any $e\in \partial B_1$).
\end{theorem}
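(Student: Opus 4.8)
The plan is to prove this by establishing the $\Gamma$-convergence (liminf inequality plus recovery sequence) of the functionals $F_\eps(u) = \eps^{-1}\int\int K_\eps(x-y)|u(x)-u(y)|\,dx\,dy$ to $\sigma\int|Du|$ on $\BV$, which in particular yields the pointwise limit along the constant sequence $u_\eps \equiv u$. The standard route follows D\'avila and Bourgain--Brezis--Mironescu. First I would reduce to smooth functions: for $u\in C^\infty_c(\R^n)$ a direct Taylor-expansion argument gives
\[
\eps^{-1}\int_{\R^n}\int_{\R^n} K_\eps(x-y)|u(x)-u(y)|\,dx\,dy = \int_{\R^n}\int_{\R^n} K_\eps(z)\,\frac{|u(x+z)-u(x)|}{\eps}\,dx\,dz,
\]
and after the change of variables $z = \eps w$ the inner difference quotient converges to $|Du(x)\cdot w|$; dominated convergence (using $\int K(w)|w|\,dw<\infty$, which follows from $\int_0^\infty k(r)r^n\,dr<\infty$) then gives the limit $\int_{\R^n}\int_{\R^n} K(w)|Du(x)\cdot w|\,dw\,dx$, and computing the $w$-integral in polar coordinates produces exactly $k_0\int_{\partial B_1}|e\cdot x|\,d\H^{n-1}(x)\cdot\int|Du|$, i.e. $\sigma\int|Du|$. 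The rotational invariance of $K$ makes $\int_{\partial B_1}|\xi\cdot x|\,d\H^{n-1}(x)$ independent of the unit vector $\xi = Du(x)/|Du(x)|$, which is what lets the constant $\sigma$ factor out.

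Next I would upgrade from smooth to general $\BV$. For the recovery sequence / upper bound: given $u\in\BV$ with compact support, mollify to get $u_\delta\in C^\infty_c$ with $u_\delta\to u$ in $L^1$ and $\int|Du_\delta|\to\int|Du|$; one shows $F_\eps$ is "continuous enough" under this approximation, using the elementary inequality $F_\eps(u) \le F_\eps(u_\delta) + C\eps^{-1}\int\int K_\eps(x-y)|(u-u_\delta)(x)-(u-u_\delta)(y)|$ and controlling the error term. For the lower bound (liminf inequality): this is where the main work lies. One uses the fact that for fixed direction $e$, $\eps^{-1}\int K_\eps(z)|u(x+z)-u(x)|\,dx \ge (\text{something})\cdot |\partial_e u|$ type estimates, integrating the one-dimensional difference quotients $\int_{\R}|u(x'+te)-u(x')|\,dt \gtrsim |D_e u|(\R^n)$ along lines, then combining over directions with weight $K_\eps$. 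Concretely, one truncates $K_\eps$ to an annulus, slices along the direction of $z$, applies the 1-D total-variation lower bound $\liminf_{h\to0} h^{-1}\int|v(t+h)-v(t)|\,dt \ge |Dv|$, and integrates back; lower semicontinuity of total variation handles the passage through the slicing.

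The main obstacle I anticipate is the liminf inequality, specifically making the slicing argument uniform: one must show that the "loss" incurred by restricting the kernel to a bounded annulus $\{\rho_1 \le |z| \le \rho_2\}$ is negligible (here the tail bound $\int_{|z|>R}K(z)|z|\,dz \to 0$ as $R\to\infty$, guaranteed by $\int_0^\infty k(r)r^n\,dr<\infty$, is essential), and that the near-diagonal part $\{|z|<\rho_1\}$ contributes at most $\sigma_{\rho_1}\int|Du|$ with $\sigma_{\rho_1}\to 0$. A clean alternative to sidestep some of this is to invoke the general $\Gamma$-convergence framework of Ambrosio--De Philippis--Martinazzi or the original D\'avila argument verbatim, but since the statement only asks for the pointwise limit on a fixed $u\in\BV$, the direct two-sided estimate (smooth case by Taylor expansion, then density plus lower semicontinuity) is the most economical. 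I would present the smooth case in detail and then indicate that the extension to $\BV$ is by the now-standard approximation and slicing arguments, citing \cite{davila,BBM}.
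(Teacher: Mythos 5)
The paper does not prove this statement: Theorem \ref{thm:per1} is recalled verbatim from \cite{davila,BBM} with no argument supplied, so there is no internal proof to compare against. Your computation for $u\in C^\infty_c(\R^n)$ is correct and produces the right constant ($\int_{\R^n}K(w)\,|Du(x)\cdot w|\,dw = k_0\,|Du(x)|\int_{\partial B_1}|e\cdot\theta|\,d\H^{n-1}(\theta)$ by polar coordinates and radial symmetry), and your slicing strategy for the lower bound is indeed D\'avila's. But your passage from smooth to general $\BV$ has a genuine gap in the \emph{upper} bound. You propose $F_\eps(u)\le F_\eps(u_\delta)+F_\eps(u-u_\delta)$ with $u_\delta=u*\eta_\delta$ and claim the error term can be controlled. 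The only available uniform-in-$\eps$ bound on the error is $F_\eps(v)\le \bigl(\eps^{-1}\int K_\eps(z)|z|\,dz\bigr)\,|Dv|(\R^n)=n\omega_n k_0\,|Dv|(\R^n)$ with $v=u-u_\delta$; but mollification converges to $u$ only strictly in $\BV$, not in variation norm, so $|D(u-u_\delta)|(\R^n)$ is comparable to $2|Du|(\R^n)$ and does not vanish as $\delta\to0$. The alternative bound $F_\eps(v)\le C\eps^{-1}\|v\|_{L^1}$ blows up for fixed $\delta$, and a diagonal choice $\delta=\delta(\eps)$ destroys the smooth-case limit unless you prove it uniformly in $\delta$. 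As written, the step fails.

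The standard fix replaces mollification by the sharp directional translation estimate: for $u\in\BV(\R^n)$ and $h\ne0$, $\int_{\R^n}|u(x+h)-u(x)|\,dx\le |h|\,\bigl|\langle Du,\tfrac{h}{|h|}\rangle\bigr|(\R^n)=|h|\int_{\R^n}\bigl|\nu_u\cdot\tfrac{h}{|h|}\bigr|\,d|Du|$. Integrating this against $\eps^{-1}K_\eps(h)\,dh$, using Fubini and polar coordinates, gives $F_\eps(u)\le\sigma\,|Du|(\R^n)$ exactly for every $\eps$, which is the upper bound with the correct constant and no approximation at all. Conversely, for the lower bound on a \emph{fixed} $u$ you do not need slicing: by Jensen's inequality and translation invariance of the kernel, $F_\eps(u*\eta_\delta)\le F_\eps(u)$, so $\liminf_\eps F_\eps(u)\ge\sigma\int|D(u*\eta_\delta)|\to\sigma\int|Du|$ by your smooth-case computation and strict convergence. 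The slicing machinery you describe is only required for the full $\Gamma$-liminf along varying sequences $u_\eps\to u$, which is more than the stated theorem asks. I recommend restructuring the $\BV$ step along these lines (or simply deferring entirely to \cite{davila}, as the paper does).
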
 
The $\Gamma$-convergence of the nonlocal perimeter $P_{{K_\eps}}$ to the usual perimeter $P$ is proved in this framework in \cite{BP}.
\medskip

On the other hand, Lemma \ref{lem:K} clearly implies that the condition $\int_0^\infty k(r)  r^n\, dr<\infty$ fails when $s\in(0,1]$.
When $\Omega =\R^n$, we expect that the appropriate scaling when $s\in(0,1)$ will give rise to the $s$-perimeter. In fact, various works on this topic suggest (see in particular \cite{muratovsimon} for the case $s=1$):
 $$ 
 V_{K_\eps}(E) + |E| = P_{{K_\eps}}(E) \sim
 \begin{cases}
  \eps^{s} \sigma_s P_s(E) & \mbox{ for } s\in (0,1) \\[4pt]
 \eps  |\ln \eps|   \sigma_1 P(E) & \mbox{ for } s=1.
\end{cases}
$$ 

Our goal in this paper is to make this asymptotic rigorous in the particular framework we consider here (with $K_\eps$ solution of \eqref{eq:Keps0}) and to identify the role of boundary conditions when $\Omega\neq \R^n$.
In the sequel, we denote
$$ \P (x)= \int_\Omega  K_\eps (x,y) \chi_E(y)\, dy$$
and we consider the functional 
$$V_{K_\eps}(E)+ |E|  = \int_\Omega \chi_E (1-\P)\, dx.$$
The potential $\P$ solves
\begin{equation}\label{eq:0}
 \phi + \eps^s (-\Delta)^{s/2} \phi = \chi_E \qquad \mbox{ in } \Omega
\end{equation}
together with Robin boundary conditions. In the local case $s=2$, these are
$$
\alpha \phi + \beta \eps \nabla \phi\cdot n = 0  \quad \mbox{ on } \partial \Omega
$$
and  in the nonlocal case $s\in(0,2)$ the corresponding nonlocal Robin boundary conditions  take the form:
$$
 \alpha \phi  + \beta \widetilde \N (\phi) = 0 \qquad \mbox{ in } \C\Omega
$$
$$ 
 \widetilde \N (\phi)(x)  = \frac{1}{c_{n,s}\int_\Omega \frac{1}{|x-y|^{n+s}} \, dy} \N(\phi) (x) , \quad 
  \N (\phi)(x) =c_{n,s} \int_{\Omega}\frac{\phi(x) - \phi(y) }{|x-y|^{n+s}}\, dy,  \qquad x\in \C\Omega.
 $$
The coefficients $\alpha$ and $\beta$ will always be assumed to be non-negative functions (defined on $\pa\Omega$ or $\mathcal C\Omega$, as appropriate) satisfying $\alpha(x)+\beta(x)>0$ (for the nonlocal case, we refer to \cite{DRV} for an introduction to such boundary value problems with Neumann boundary conditions).

The appropriate scaling depends on $s$ and the discussion above suggests the following definition:
\begin{equation}\label{eq:defJJ}
\J^s_\eps(E):=
\begin{cases}
\displaystyle  \eps^{-s}\int_\Omega \chi_E (1-\P)\, dx & \mbox{ if } s\in(0,1) \\[5pt]
\displaystyle  \eps^{-1}|\ln\eps|^{-1}\int_\Omega \chi_E (1-\P)\, dx   & \mbox{ if } s=1\\[5pt]
\displaystyle  \eps^{-1}\int_\Omega \chi_E (1-\P)\, dx  & \mbox{ if } s\in(1,2].
  \end{cases}
\end{equation}

The main results of this paper identify the limit of $\J^s_\eps(E)$ for a given set $E$ and study the $\Gamma$-convergence of the functional $\J^s_\eps$. 
When $\Omega=\R^n$ some of these results are classical (though in some cases we present simpler proofs that use strongly the fact that the kernel $K_\eps$ solves \eqref{eq:Keps0}), 
but our focus is the case $\Omega \neq \R^n$ and \eqref{eq:Keps0}  (or \eqref{eq:0}) is supplemented by Robin boundary conditions.
To our knowledge, the effects of these boundary conditions on the limiting functional has not been previously identified in all cases $s\in (0,2]$.

\medskip

The original motivation for this study comes from the recent paper \cite{MW}, in which  we study the following energy functional
$$ 
 P(E)  - \beta V_{K_\eps}(E)
$$
defined for Caccioppoli sets $E\subset \R^n$ with fixed volume $|E|=m$.
We thus also identify the $\Gamma$-limit of this functional when $s\in(0,1)$ and $s=2$ (see Theorems \ref{thm:gammaRn}, \ref{thm:gammaOmega} and \ref{thm:gammaOmega2}).

\paragraph{Outline of the rest of the paper}
In Section \ref{sec:main}, we state  the main results of this paper. 
In Section~\ref{sec:pre}, we recall several notations, definition and properties of the perimeter and s-perimeters which will be useful in the paper. We also derive an alternative formula for $\J_\eps^s$ (see Proposition \ref{prop:alt}).
The rest of the paper is then devoted to the proofs of the main results.

\section{Main results of the paper}\label{sec:main}

\subsection{The case $s\in(0,1)$}
We start with the case when $s\in(0,1)$ and $\Omega=\R^n$, that is
$$\J^s_\eps(E)=   \eps^{-s}\int_{\R^n}  \chi_E (1-\P)\, dx $$
where $\P$ is the unique bounded solution of
\begin{equation}\label{eq:P0R}
 \phi + \eps^s (-\Delta)^{s/2} \phi = \chi_E \qquad \mbox{ in } \R^n
 \end{equation}
 (see \eqref{eq:deflaps} for the definition of the fractional Laplacian $(-\Delta)^{s/2}$).
In this case, the kernel $K_\eps$ solution of \eqref{eq:Keps0} is of the form $K_\eps(x,y) = \eps^{-n} K\left(\frac{x-y}{\eps}\right)$ where
$K(z) \sim \frac{c_{n,s}}{|z|^{n+s}}$ as $|z|\to\infty$. 
In that case, the functional $\J^s_\eps$ converges to $c_{n,s}P_s(E)$ with the $s$-perimeter defined by:
$$
P_s(E) : = \frac 1 2  \int_{\R^n} \int_{\R^n}  \frac{|\chi_E(x) - \chi_E(y)|}{|x-y|^{n+s}} \, dx \, dy .
$$
Indeed, we can prove:
 \begin{theorem}\label{thm:1}
Let $\Omega = \R^n$ and $s\in(0,1)$. 
\item[(i)] For every set $E$ in $L^1(\R^n)$ such that $P_s(E)<\infty$, we have 
$$\lim_{\eps\to 0} \J^s_\eps(E) = c_{n,s}   P_s(E) .
$$
\item[(ii)] For any family $\{ E_\eps\}_{\eps>0}$ that converges to $E$ in $L^1(\R^n)$, we have
 $$\liminf_{\eps\to 0} \J^s_\eps(E_\eps) \geq c_{n,s}   P_s(E).$$
In particular, the sequence of functionals $\J^s_\eps$ $\Gamma$-converges to  $c_{n,s}   P_s$.
\end{theorem}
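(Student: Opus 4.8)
The whole argument hinges on rewriting $\J^s_\eps$ as a nonlocal–perimeter functional with an explicit kernel. Since $K_\eps(\cdot,y)$ is by definition the fundamental solution of \eqref{eq:Keps0} in $\R^n$, the unique bounded solution of \eqref{eq:P0R} is $\P=K_\eps*\chi_E$, with $K_\eps(z)=\eps^{-n}K(z/\eps)$ as in Lemma~\ref{lem:K}. Using $K_\eps\ge0$, $\int_{\R^n}K_\eps=1$ and $1-\P=K_\eps*(1-\chi_E)$, I would first record the identity
\begin{equation}\label{eq:Jrewrite}
\J^s_\eps(E)=\eps^{-s}\int_E\int_{\R^n\setminus E}K_\eps(x-y)\,dy\,dx=\frac12\int_{\R^n}\int_{\R^n}g_\eps(x-y)\,|\chi_E(x)-\chi_E(y)|\,dx\,dy,\qquad g_\eps(z):=\eps^{-n-s}k(|z|/\eps).
\end{equation}
Next I would establish two properties of the nonnegative kernels $g_\eps$. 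From the large-$r$ asymptotics \eqref{eq:Ks}, for every fixed $z\ne0$ one has $g_\eps(z)=\big((|z|/\eps)^{n+s}k(|z|/\eps)\big)\,|z|^{-n-s}\to c_{n,s}|z|^{-n-s}$ as $\eps\to0$, and this convergence is uniform on $\{|z|\ge\rho\}$ for each fixed $\rho>0$. For the domination I would use, in addition to \eqref{eq:Ks}, the standard fact that the profile also satisfies $k(r)\le Cr^{s-n}$ for $r\le1$ (near the origin the equation $K+(-\Delta)^{s/2}K=\delta$ carries the Riesz-potential singularity of order $s<n$); since $r^{s-n}\le r^{-n-s}$ for $r\le1$, this yields $k(r)\le Cr^{-n-s}$ for all $r>0$, hence
\begin{equation}\label{eq:gdom}
0\le g_\eps(z)\le C\,|z|^{-n-s}\qquad\text{for all }z\ne0,\ \eps\in(0,1].
\end{equation}

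For part~(i), \eqref{eq:gdom} supplies the integrable majorant $(x,y)\mapsto C|x-y|^{-n-s}|\chi_E(x)-\chi_E(y)|$, whose integral equals $2C\,P_s(E)<\infty$ by hypothesis; dominated convergence applied to \eqref{eq:Jrewrite}, together with the pointwise limit above, then gives $\J^s_\eps(E)\to c_{n,s}P_s(E)$ directly. For part~(ii), given $E_\eps\to E$ in $L^1(\R^n)$, I would pass to a subsequence realizing $\liminf_\eps\J^s_\eps(E_\eps)$ and, by a further extraction, arrange $\chi_{E_\eps}\to\chi_E$ a.e. in $\R^n$. Discarding the region $\{|x-y|<\rho\}$ in \eqref{eq:Jrewrite} (legitimate since $g_\eps\ge0$) and applying Fatou's lemma, using the uniform convergence of $g_\eps$ on $\{|z|\ge\rho\}$, gives
$$\liminf_{\eps\to0}\J^s_\eps(E_\eps)\ \ge\ \frac{c_{n,s}}{2}\int\!\!\int_{|x-y|\ge\rho}\frac{|\chi_E(x)-\chi_E(y)|}{|x-y|^{n+s}}\,dx\,dy ,$$
and letting $\rho\downarrow0$ by monotone convergence yields $\liminf_{\eps\to0}\J^s_\eps(E_\eps)\ge c_{n,s}P_s(E)$. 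The $\Gamma$-convergence claim is then immediate: (ii) is the lower bound ($\Gamma$-$\liminf$) inequality, while for every $E$ the constant sequence $E_\eps\equiv E$ is a recovery sequence — by (i) when $P_s(E)<\infty$, and trivially otherwise.

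The only genuinely delicate point is the uniform bound \eqref{eq:gdom} near $z=0$: it is not contained in Lemma~\ref{lem:K} and must be supplied by the near-origin (Bessel/Riesz) behavior of $K$, and it is precisely what is needed to keep the short-range region $\{|x-y|<\eps\}$ from contributing uncontrollably to \eqref{eq:Jrewrite} when $E$ is assumed only to have finite $s$-perimeter. Everything else is soft — convolution identities, Fatou, and dominated and monotone convergence.
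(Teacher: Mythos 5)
Your proposal is correct, but it follows a genuinely different route from the paper. You treat $\J^s_\eps$ as a rescaled convolution-type nonlocal perimeter with explicit kernel $g_\eps(z)=\eps^{-n-s}k(|z|/\eps)$ and run the classical scheme: pointwise kernel asymptotics plus a uniform majorant $g_\eps(z)\le C|z|^{-n-s}$ for part (i), and truncation away from the diagonal plus Fatou for part (ii). The paper instead never touches the kernel quantitatively. For (i) it uses the equation satisfied by $\P$ to write $\J^s_\eps(E)=\int \P\,(-\Delta)^{s/2}\chi_E\,dx=c_{n,s}\int\P(\chi_E\psi_{\C E}-\chi_{\C E}\psi_E)\,dx$ via \eqref{eq:LPs}, and passes to the limit by dominated convergence using only $0\le\P\le1$ (maximum principle) and $\P\to\chi_E$ a.e.\ (Lemma \ref{lem:1}); for (ii) it uses the energy identity \eqref{eq:Jepss0}, i.e.\ $\J^s_\eps(E)=\eps^{-s}\|\chi_E-\P\|_{L^2}^2+\tfrac{c_{n,s}}2[\P]_{H^{s/2}}^2$, and the weak lower semicontinuity of the $H^{s/2}$ seminorm. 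The trade-off is exactly the one you flag: your argument needs the near-origin bound $k(r)\lesssim r^{s-n}$, which is true (it follows, e.g., from the subordination formula $K=\int_0^\infty e^{-t}p_t\,dt$ with $p_t$ the $s$-stable heat kernel) but is not contained in Lemma \ref{lem:K} and must be proved separately; you should supply that proof rather than assert it. What the paper's PDE-based argument buys is that it carries over verbatim to the bounded-domain case with Robin conditions (Theorem \ref{thm:2}), where $K_\eps(x,y)$ is no longer translation invariant and its behavior near $\pa\Omega$ is hard to control, whereas your kernel-based argument is confined to $\Omega=\R^n$. Within that setting, though, your proof is complete and arguably more elementary, and your handling of the $\Gamma$-limsup via constant recovery sequences matches the paper's.
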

Next, we assume that $\Omega$ is an open subset of $\R^n$ and we 
consider equation \eqref{eq:P0R} in $\Omega$, supplemented with Robin boundary conditions:
\begin{equation}\label{eq:bc}
\begin{cases}
 \phi + \eps^s (-\Delta)^{s/2} \phi = \chi_E \qquad & \mbox{ in } \Omega\\
 \alpha \phi  + \beta \widetilde \N (\phi) = 0 \qquad & \mbox{ in } \C\Omega
\end{cases}
\end{equation}
with
$$ 
 \widetilde \N (\phi)(x)  = \frac{1}{c_{n,s}\int_\Omega \frac{1}{|x-y|^{n+s}} \, dy} \N(\phi) (x) , \quad 
  \N (\phi)(x) =c_{n,s} \int_{\Omega}\frac{\phi(x) - \phi(y) }{|x-y|^{n+s}}\, dy,  \qquad x\in \C\Omega
 $$
where $\alpha$ and $\beta$ are non-negative functions (defined in $\C\Omega$)  satisfying $\alpha(x)+\beta(x)> 0$ for all $x\in \C\Omega$. 
These conditions include the classical Dirichlet boundary conditions when $\beta=0$ and  Neumann boundary conditions when $\alpha=0$ (these Neumann boundary conditions were first introduced and studied in \cite{DRV}).
Before stating our result in this case, we recall  that  the local contribution of the $s$-perimeter in $\Omega$ is defined by:
$$
P^L_s(E,\Omega) : = \frac 1 2  \int_{\Omega} \int_{\Omega}  \frac{|\chi_E(x) - \chi_E(y)|}{|x-y|^{n+s}} \, dx \, dy .
$$
We then prove:
\begin{theorem}\label{thm:2}
Let $\Omega$ be a bounded subset of $\R^n$ such that $P_s(\Omega)<\infty$ and let $s\in (0,1)$.
Assume $\alpha$, $\beta:\C\Omega\mapsto [0,\infty)$  satisfy $\alpha(x)+\beta(x)> 0$ for all $x\in \C\Omega$. Then:
\item[(i)] For every set $E\subset \Omega$ such that  $P_s^L(E,\Omega)<\infty$, we have 
$$\lim_{\eps\to 0} \J^s_\eps(E)  = \J^s_0 (E):= c_{n,s} \left[  P_s^L(E,\Omega) 
+ \int_{\C\Omega}  \frac{\alpha}{\alpha+\beta} \psi_{E }(x)\, dx   + \int_{\C\Omega}   \frac{\beta}{\alpha+\beta}	 \frac{ \psi_{E\cap\Omega} (x)\psi_{\C E \cap \Omega} (x)}{\psi_\Omega(x)} \, dx  \right]
$$
where for a given set $F$, we denote $\psi_F (x): = \int_{F} \frac{1}{|x-y|^{n+s}}\, dy $ (defined for $ x\in \C F$).
\item[(ii)] For any family $\{ E_\eps\}_{\eps>0}$ that converges to $E$ in $L^1(\Omega)$,
 $$\liminf_{\eps\to 0} \J^s_\eps(E_\eps) \geq \J^s_0 (E)
$$
In particular, the sequence of functional $\J^s_\eps$ $\Gamma$-converges to  $ \J^s_0$.
\end{theorem}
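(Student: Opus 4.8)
\emph{Proof strategy.} The plan is to recast $\J^s_\eps(E)$ through the variational structure of the Robin problem \eqref{eq:bc}. For $u\in L^2(\Omega)$ I would introduce its \emph{Robin extension} $\tilde u$ to $\R^n$: $\tilde u=u$ on $\Omega$, and for $x\in\C\Omega$
\[
\tilde u(x)=\frac{\beta(x)}{\big(\alpha(x)+\beta(x)\big)\,\psi_\Omega(x)}\int_\Omega\frac{u(y)}{|x-y|^{n+s}}\,dy,
\]
the unique value enforcing $\alpha(x)\tilde u(x)+\beta(x)\widetilde\N(\tilde u)(x)=0$ (no division by $\beta$ occurs, so this is well defined for all admissible $\alpha,\beta$). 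Then $a(u,v):=\langle(-\Delta)^{s/2}\tilde u,v\rangle_{L^2(\Omega)}$ is a symmetric nonnegative bilinear form with form domain $V=\{u\in L^2(\Omega):a(u,u)<\infty\}$, and for each $\eps$ the solution $\P$ of \eqref{eq:bc} is the unique $\phi\in V$ with $0\le\phi\le1$ and $\langle\phi,v\rangle_{L^2(\Omega)}+\eps^s a(\phi,v)=\langle\chi_E,v\rangle_{L^2(\Omega)}$ for all $v\in V$ (Lax--Milgram; $a$ is a Dirichlet form, which gives the $L^\infty$ bound; cf.\ \cite{MW,DRV}). Performing the nonlocal integration by parts of \cite{DRV} and eliminating the exterior values — the substance of the alternative formula in Proposition~\ref{prop:alt} — yields
\[
a(u,v)=\frac{c_{n,s}}{2}\int_\Omega\!\int_\Omega\frac{(u(x)-u(y))(v(x)-v(y))}{|x-y|^{n+s}}\,dx\,dy
+\frac12\int_\Omega\!\int_\Omega R(x,z)(u(x)-u(z))(v(x)-v(z))\,dx\,dz
+c_{n,s}\int_\Omega\rho\,uv\,dx,
\]
where
\[
R(x,z)=c_{n,s}\int_{\C\Omega}\frac{\beta(w)}{(\alpha(w)+\beta(w))\psi_\Omega(w)}\frac{dw}{|x-w|^{n+s}|z-w|^{n+s}}\ge0,\qquad
\rho(x)=\int_{\C\Omega}\frac{\alpha(w)}{\alpha(w)+\beta(w)}\frac{dw}{|x-w|^{n+s}}\ge0;
\]
in particular $u\mapsto a(u,u)$ is a sum of convex functionals, lower semicontinuous for $L^2(\Omega)$-convergence (Fatou, along an a.e.-convergent subsequence).

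The central computation will be to evaluate this form on a characteristic function. Using $|\chi_E(x)-\chi_E(y)|^2=\chi_E(x)(1-\chi_E(y))+\chi_E(y)(1-\chi_E(x))$ and Tonelli, the three terms of $a(\chi_E,\chi_E)$ become, respectively, $c_{n,s}P^L_s(E,\Omega)$, $\ c_{n,s}\int_{\C\Omega}\frac{\beta}{\alpha+\beta}\frac{\psi_{E\cap\Omega}\,\psi_{\C E\cap\Omega}}{\psi_\Omega}$ and $\ c_{n,s}\int_{\C\Omega}\frac{\alpha}{\alpha+\beta}\,\psi_E$. Since $\frac{\psi_{E\cap\Omega}\psi_{\C E\cap\Omega}}{\psi_\Omega}\le\psi_E\le\psi_\Omega$ and $\int_{\C\Omega}\psi_\Omega=P_s(\Omega)<\infty$, each term is finite as soon as $P^L_s(E,\Omega)<\infty$, so $\chi_E\in V$ and $a(\chi_E,\chi_E)=\J^s_0(E)$. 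This is precisely where the hypothesis $P_s(\Omega)<\infty$ is used. (A shortcut avoiding the full expression of $a$: write $a(\chi_E,\chi_E)=\int_E(-\Delta)^{s/2}\widetilde{\chi_E}\,dx$; for a.e.\ $x\in E$ the principal value is a convergent ordinary integral equal to $c_{n,s}\big[\psi_{\C E\cap\Omega}(x)+\int_{\C\Omega}\frac{1-\widetilde{\chi_E}(w)}{|x-w|^{n+s}}\,dw\big]$, and one integrates over $E$ using $1-\widetilde{\chi_E}=\frac{\alpha\psi_\Omega+\beta\psi_{\C E\cap\Omega}}{(\alpha+\beta)\psi_\Omega}$ on $\C\Omega$.)

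With this in hand the convergence is soft. Testing the weak formulation for $\P$ against $v=\chi_E\in V$ gives $\J^s_\eps(E)=\eps^{-s}\int_\Omega\chi_E(1-\P)=a(\P,\chi_E)$, while testing against $v=\P$ gives $\langle\P,\chi_E-\P\rangle_{L^2(\Omega)}=\eps^s a(\P,\P)$, hence also $\J^s_\eps(E)=\eps^{-s}\|\chi_E-\P\|^2_{L^2(\Omega)}+a(\P,\P)$. For (i): by the Cauchy--Schwarz inequality for the nonnegative form $a$, $a(\P,\P)\le\J^s_\eps(E)=a(\P,\chi_E)\le a(\P,\P)^{1/2}a(\chi_E,\chi_E)^{1/2}$, so $\J^s_\eps(E)\le a(\chi_E,\chi_E)=\J^s_0(E)$ for all $\eps$; and $\eps^{-s}\|\chi_E-\P\|^2_{L^2(\Omega)}\le\J^s_\eps(E)\le\J^s_0(E)$ forces $\P\to\chi_E$ in $L^2(\Omega)$, so $\J^s_\eps(E)\ge a(\P,\P)$ together with lower semicontinuity of $a(\cdot,\cdot)$ gives $\liminf_{\eps\to0}\J^s_\eps(E)\ge a(\chi_E,\chi_E)=\J^s_0(E)$, whence $\J^s_\eps(E)\to\J^s_0(E)$. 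For (ii): assuming $\liminf_{\eps\to0}\J^s_\eps(E_\eps)<\infty$ and passing to a subsequence along which $\J^s_\eps(E_\eps)$ is bounded, $\eps^{-s}\|\chi_{E_\eps}-\Pe\|^2_{L^2(\Omega)}\le\J^s_\eps(E_\eps)$ gives $\|\chi_{E_\eps}-\Pe\|_{L^2(\Omega)}\to0$, while $\|\chi_{E_\eps}-\chi_E\|^2_{L^2(\Omega)}=\|\chi_{E_\eps}-\chi_E\|_{L^1(\Omega)}\to0$, so $\Pe\to\chi_E$ in $L^2(\Omega)$; then $\J^s_\eps(E_\eps)\ge a(\Pe,\Pe)$ and lower semicontinuity give $\liminf_{\eps\to0}\J^s_\eps(E_\eps)\ge\J^s_0(E)$. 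Combined with the constant recovery sequence from (i), this yields the $\Gamma$-convergence. I expect the only genuine difficulty to be the rigorous derivation of the explicit form of $a$ (the nonlocal integration by parts with the Robin extension and the interchange of singular integrals, which works because $s<1$, $P^L_s(E,\Omega)<\infty$ and $P_s(\Omega)<\infty$) together with the bookkeeping leading to $a(\chi_E,\chi_E)=\J^s_0(E)$; the remainder is a routine compactness-and-semicontinuity argument.
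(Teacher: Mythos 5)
Your argument is correct, and it takes a genuinely different route from the paper's. The paper proves (i) by the nonlocal integration by parts $\J^s_\eps(E)=\int_\Omega\P\,(-\Delta)^{s/2}\chi_E\,dx+\int_{\C\Omega}\P\,\N(\chi_E)\,dx$ followed by dominated convergence using the a.e.\ convergence $\P\to\chi_E$; for (ii) it uses the formula \eqref{eq:JepssO}, in which the exterior values $\P|_{\C\Omega}$ and $\widetilde\N(\P)$ survive as separate unknowns, works in the Hilbert space $W$ of \cite{DRV}, identifies the weak limits of both objects from the boundary condition, and matches $\J^s_0(E)$ by an explicit algebraic computation at the end. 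You instead eliminate the exterior values once and for all through the Robin extension and obtain a closed symmetric Dirichlet form on $L^2(\Omega)$ with explicit nonnegative kernels $R$ and $\rho$ (I checked the reduction: splitting $u(x)-\tilde u(w)=\frac{\alpha}{\alpha+\beta}u(x)+\theta(w)\int_\Omega\frac{u(x)-u(z)}{|w-z|^{n+s}}dz$ with $\theta=\frac{\beta}{(\alpha+\beta)\psi_\Omega}$ and symmetrizing does produce exactly your $R$ and $\rho$, and $a(\chi_E,\chi_E)=\J^s_0(E)$ is right). With that in hand, (ii) is pure Fatou with no identification of exterior traces needed, and (i) follows from the abstract chain $a(\P,\P)\le\J^s_\eps(E)=a(\P,\chi_E)\le a(\P,\P)^{1/2}a(\chi_E,\chi_E)^{1/2}$, which even yields the quantitative bound $\J^s_\eps(E)\le\J^s_0(E)$ for \emph{every} $\eps>0$ --- something the paper's proof does not record. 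What you give up is that the entire argument now hinges on the rigorous derivation of the kernel representation of $a$ (the interchange of the singular integrals defining $R$ and the identification of the weak formulation with the pointwise Robin problem), which you correctly flag as the one substantive technical step; the paper's dominated-convergence proof of (i) sidesteps this by never symmetrizing. Both proofs consume the hypotheses in the same places: $P_s(\Omega)<\infty$ makes the exterior terms finite, and $P^L_s(E,\Omega)<\infty$ puts $\chi_E$ in the form domain.
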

For {\em Dirichlet boundary conditions}  (that is when $\beta=0$), we find (using Lemma \ref{lem:Psp})
\begin{align*}
\J^s_0 (E)
& = c_{n,s} \left[   \int_{\R^n} \chi_{\C E\cap\Omega} \psi_{E\cap\Omega}\, dx 
+ \int_{\R^n} \chi_{\C\Omega}  \psi_{E\cap\Omega}(x)\, dx    \right]\\
& = c_{n,s}   \int_{\R^n} \chi_{\C (E\cap\Omega) } \psi_{E\cap\Omega}\, dx \\
& = c_{n,s} P_s(E)
\end{align*}
 so the limiting functional does not see the set $\Omega$.
On the other hand, with {\em Neumann boundary conditions} ($\alpha=0$) our result gives:
\begin{align*}
\J^s_0 (E) &=
c_{n,s} \left[  P_s^L(E,\Omega) 
+  \int_{\C\Omega}   \frac{ \psi_{E\cap\Omega} (x)\psi_{\C E \cap \Omega} (x)}{\psi_\Omega(x)} \, dx  \right]
\end{align*}
which is, to our knowledge, a new functional. Note that this functional takes the same value for the set $E\cap\Omega$ and the set $\C E\cap\Omega$. This symmetry suggests that minimizers are such that $\pa E$ intersect $\pa\Omega$ orthogonally, a common feature of minimal surfaces with Neumann boundary conditions.

\medskip

\subsection{The case $s=2$.}
Next, we consider the classical case $s=2$ for which we have:
$$
\J_\eps(E)= \eps^{-1}\int_\Omega \chi_E (1-\P)\, dx 
$$
(we drop the index $2$ in the local case for clarity)
where $\P$ solves the  local elliptic equation
\begin{equation}\label{eq:02}
 \phi -\eps^2\Delta \phi = \chi_E \qquad \mbox{ in } \Omega.
\end{equation}
When $\Omega = \R^n$, Theorem \ref{thm:per1} applies (in fact, this theorem applies when $\Omega=\R^n$ and $s\in(1,2]$), so the interesting case is when $\Omega$ is a subset of $\R^n$ and  \eqref{eq:02} is supplemented with the following Robin boundary conditions:
\begin{equation}\label{eq:bc0}
\alpha \phi + \beta \eps \nabla \phi\cdot n = 0  \quad \mbox{ on } \partial \Omega.
\end{equation}

In that case, we first prove:
\begin{proposition}\label{prop:4}
Let $s=2$, $\Omega$ be a bounded open set with $C^{1,\alpha}$ boundary. Assume further that 
$\alpha(x)$, $\beta(x)$ are bounded, Lipschitz non-negative functions such that $\alpha(x)+\beta(x) \geq \sigma>0.$
Given a set $E\subset \Omega$ with finite perimeter $P(E,\Omega)<\infty$, we have 
\begin{equation}\label{eq:lims=2}
\lim_{\eps \to 0 } \J_\eps (E) = \frac 1 2 P(E,\Omega)  + 
 \int_{\pa\Omega } \frac{\alpha(x)}{\alpha(x)+\beta(x)} \chi_E(x) \, d\H^{n-1}(x)  .
\end{equation}
\end{proposition}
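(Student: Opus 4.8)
The key observation is that \eqref{eq:02}--\eqref{eq:bc0} is the Euler--Lagrange system of a convex functional. Testing \eqref{eq:02} against $\phi=\P$ (integrating by parts and using \eqref{eq:bc0}) and expanding $\int_\Omega(\phi-\chi_E)^2$ yields the identity
$$\eps\,\J_\eps(E)\;=\;\int_\Omega\chi_E\,(1-\P)\,dx\;=\;\min_{\psi\in H^1(\Omega)}F_\eps(\psi),$$
where
$$F_\eps(\psi)\;:=\;\int_\Omega|\psi-\chi_E|^2\,dx\;+\;\eps^2\int_\Omega|\na\psi|^2\,dx\;+\;\eps\int_{\pa\Omega}\tfrac{\alpha}{\beta}\,\psi^2\,d\H^{n-1},$$
the minimizer being $\phi=\P$ (on $\{\beta=0\}$ the last term is read as the constraint $\psi=0$; a reformulation of this kind should be contained in Proposition~\ref{prop:alt}). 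Thus \eqref{eq:lims=2} is the assertion that $\eps^{-1}\min_\psi F_\eps(\psi)\to\tfrac12 P(E,\Omega)+\int_{\pa\Omega}\tfrac{\alpha}{\alpha+\beta}\chi_E\,d\H^{n-1}$, i.e. the asymptotic analysis of a Modica--Mortola--type functional whose two features are that the ``double well'' $|\psi-\chi_E(x)|^2$ is $x$--dependent (but vanishes exactly at $\psi=\chi_E(x)\in\{0,1\}$; recall that, since $E\subset\Omega$ and $\pa\Omega\in C^{1,\alpha}$, the interior trace of $\chi_E$ on $\pa\Omega$ is $\{0,1\}$--valued $\H^{n-1}$--a.e.), and that there is a boundary penalization. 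The two relevant optimal one--dimensional profiles are explicit and give the two constants appearing in \eqref{eq:lims=2}: across the interface one solves $q''=q-\chi_{\{t<0\}}$ on $\R$ with $q(-\infty)=1$, $q(+\infty)=0$, for which $\int_\R[(q-\chi_{\{t<0\}})^2+(q')^2]\,dt=\tfrac12$; at the boundary one solves $q''=q-1$ on $(0,\infty)$ with $q(+\infty)=1$ and the natural condition $q'(0)=\tfrac\alpha\beta\,q(0)$, for which $\int_0^\infty[(q-1)^2+(q')^2]\,dt+\tfrac\alpha\beta\,q(0)^2=\tfrac{\alpha}{\alpha+\beta}$, while the analogous problem with $q(+\infty)=0$ has $q\equiv0$ and cost $0$.

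\textbf{Lower bound.} It is enough to prove the $\Gamma$-liminf inequality: for every $\psi_\eps\to\chi_E$ in $L^1(\Omega)$ with $\eps^{-1}F_\eps(\psi_\eps)$ bounded, $\liminf_\eps\eps^{-1}F_\eps(\psi_\eps)\ge\tfrac12 P(E,\Omega)+\int_{\pa\Omega}\tfrac{\alpha}{\alpha+\beta}\chi_E$ (then apply it to $\psi_\eps=\P$, noting that $\eps^{-1}F_\eps(\P)=\J_\eps(E)$ is bounded by the competitor built in the upper bound below). Introduce the measures $\mu_\eps:=\big(\tfrac1\eps|\psi_\eps-\chi_E|^2+\eps|\na\psi_\eps|^2\big)\mathcal L^n\lfloor\Omega+\tfrac{\alpha}{\beta}\psi_\eps^2\,\H^{n-1}\lfloor\pa\Omega$, so $\mu_\eps(\overline\Omega)=\eps^{-1}F_\eps(\psi_\eps)$; passing to a subsequence $\mu_\eps\rightharpoonup\mu$ with $\mu(\overline\Omega)\le\liminf_\eps\mu_\eps(\overline\Omega)$. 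Using $\tfrac1\eps a^2+\eps b^2\ge2|ab|$ and the blow--up/slicing method, one shows $\mu\lfloor\Omega\ge\tfrac12\,\H^{n-1}\lfloor(\pa^*E\cap\Omega)$ (reduce to one dimension along $\na\chi_E$ and use $2\int_\R|q'|\,|q-\chi_{\{t<0\}}|\ge\tfrac12$) and, blowing up at $\H^{n-1}$--a.e. $x_0\in\pa\Omega$ (flattening the $C^{1,\alpha}$ boundary and freezing the bounded Lipschitz coefficients), $\mu\ge\tfrac{\alpha}{\alpha+\beta}\chi_E\,\H^{n-1}\lfloor\pa\Omega$. Since at $\H^{n-1}$--a.e. $x_0\in\pa\Omega$ the trace of $\chi_E$ is $0$ ($E$ negligible nearby) or $1$ ($E$ filling a neighbourhood), so that no interface sits on $\pa\Omega$, these two bounds charge disjoint sets and add.

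\textbf{Upper bound.} For a set $E$ whose relative boundary $\pa E\cap\Omega$ is a compact $C^2$ hypersurface meeting $\pa\Omega$ transversally, one builds an explicit competitor $\psi_\eps$ by composing the profiles $q$, $q_{bd}$ above with (mollified) signed distances to $\pa E$ and to $\pa\Omega$, glued by a partition of unity on the $O(\eps)\times O(\eps)$ set where the interior layer and the boundary layer overlap (whose contribution to $F_\eps$ is $o(\eps)$); a direct computation gives $\eps^{-1}F_\eps(\psi_\eps)\to\tfrac12 P(E,\Omega)+\int_{\pa\Omega}\tfrac{\alpha}{\alpha+\beta}\chi_E$, hence the same bound for $\limsup_\eps\J_\eps(E)=\limsup_\eps\eps^{-1}\min_\psi F_\eps(\psi)$. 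A general $E$ with $P(E,\Omega)<\infty$ is handled by an $\eps$--dependent approximation: the map $\chi_F\mapsto\eps\,\J_\eps(F)$ is the quadratic form of $I-(I+\eps^2(-\Delta))^{-1}$ (with Robin conditions), which obeys $0\le I-(I+\eps^2(-\Delta))^{-1}\le I$, so $|\J_\eps(E)-\J_\eps(E')|\le C|\Omega|^{1/2}\eps^{-1}|E\triangle E'|^{1/2}$; it therefore suffices to produce smooth sets $E_\eps$ with $|E\triangle E_\eps|=o(\eps^2)$, $P(E_\eps,\Omega)\to P(E,\Omega)$ and $\chi_{E_\eps}\to\chi_E$ in $L^1(\pa\Omega)$, which one obtains from an approximation of $E$ that is strict in $BV(\Omega)$ (so perimeters converge and, by continuity of the trace $BV(\Omega)\to L^1(\pa\Omega)$ under strict convergence, so do the boundary values), made to converge slowly enough that the $C^2$ norms of $\pa E_\eps$ stay $o(\eps^{-1/2})$ and the above competitor construction still has error $o(1)$; a diagonal argument then closes the estimate. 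Together with the lower bound this proves \eqref{eq:lims=2}. The main obstacle is the behaviour near $\pa\Omega$ in \emph{both} bounds --- the flattening/blow--up, the gluing of the two layers, and the diagonal approximation for a general $E$ --- which is where the hypotheses $\pa\Omega\in C^{1,\alpha}$ and $\alpha,\beta$ bounded Lipschitz with $\alpha+\beta\ge\sigma>0$ are used. (A less variational alternative writes $\eps\,\J_\eps(E)=\int_E\int_{\Omega\setminus E}G_\eps(x,y)\,dy\,dx+\int_E\big(1-\int_\Omega G_\eps(x,y)\,dy\big)\,dx$ with $G_\eps$ the Robin Green's function, estimates the first term via Theorem~\ref{thm:per1} plus a reflection/barrier analysis of $G_\eps$ near $\pa\Omega$, and identifies the second boundary--layer term by comparison with $Ce^{-c\,\dist(\cdot,\pa\Omega)/\eps}$.)
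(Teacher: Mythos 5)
Your reduction of $\J_\eps(E)$ to $\eps^{-1}\min_\psi F_\eps(\psi)$ is correct (it is the paper's formula \eqref{eq:Jepsrb} rewritten using the Robin condition), and your two one--dimensional profiles and the constants $\tfrac12$ and $\tfrac{\alpha}{\alpha+\beta}$ are the right ones. But your route is genuinely different from the paper's and, as written, has a gap. For this Proposition the paper does not use the variational formulation at all: it writes $\J_\eps(E)=-\eps\int_{\pa^*E}\na\P\cdot\nu_E\,d\H^{n-1}$ by the divergence theorem, proves a uniform bound $|\eps\na\P|\le C$ together with the pointwise limits $\eps\na\P(x_0)\to-\tfrac12\nu_E(x_0)$ on $\pa^*E\cap\Omega$ and $\eps\na\P(x_0)\to-\tfrac{\alpha}{\alpha+\beta}\,n(x_0)$ on $\pa^*E\cap\pa\Omega$ by blowing up the PDE (Lemma \ref{lem:3}), and concludes by dominated convergence. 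That gives the two--sided limit for an arbitrary set of finite perimeter in one stroke, with no recovery sequence and no approximation of $E$.

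The gap is in your upper bound for general $E$. Your continuity estimate $|\J_\eps(E)-\J_\eps(E')|\le C\eps^{-1}|E\triangle E'|^{1/2}$ forces $|E\triangle E_\eps|=o(\eps^2)$, while your competitor construction requires the $C^2$ norms of $\pa E_\eps$ to stay $o(\eps^{-1/2})$. These demands pull in opposite directions: for a general set of finite perimeter the rate at which $|E\triangle E_k|\to0$ along a smooth approximating sequence is tied to the rate at which the curvatures of $\pa E_k$ blow up, and nothing guarantees a single sequence satisfies both (if, say, $|E\triangle E_k|\gtrsim M_k^{-1}$ with $M_k=\|\pa E_k\|_{C^2}$, then $o(\eps^2)$ forces $M_{k(\eps)}\gg\eps^{-2}$). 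No diagonal argument can resolve this by itself, because $\J_\eps$ really is discontinuous in $E$ at scale $\eps$ --- that is precisely why the $\Gamma$-limit in Theorem \ref{thm:4} differs from the pointwise limit proved here. Note also that your $\liminf$ step presupposes that $\J_\eps(E)$ is bounded, which you defer to this (gapped) upper bound, whereas in the paper the bound is immediate from $|\eps\na\P|\le C$. Finally, your boundary blow--up giving the sharp constant $\tfrac{\alpha}{\alpha+\beta}$ in the lower bound is correct in content (the fixed well $|\psi-\chi_E(x)|^2$ prevents the transition layer from migrating off $\pa\Omega$, which is why $\tfrac{\alpha}{\alpha+\beta}$ rather than $\min\{\tfrac12,\tfrac{\alpha}{\alpha+\beta}\}$ appears), but it is only sketched, and it is a nontrivial piece of analysis that the paper's argument never needs.
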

This proposition identifies the limit of $\J_\eps (E)$. In the particular case of Neumann boundary conditions ($\alpha=0$), we find simply $ \frac 1 2 P(E,\Omega)$, but for Dirichlet boundary conditions, we get
$\frac 1 2 P(E,\Omega)+\int_{\pa \Omega}{\chi_E(x)d\H^{n-1}(x)}$. 
However, this functional is not lower-semicontinuous (take a sequence of sets $E_n$ such that
$ \int_{\pa \Omega}{\chi_{E_n}(x)d\H^{n-1}(x)}=0$ converges to $E$ such that $ \int_{\pa \Omega}{\chi_E(x)d\H^{n-1}(x)} \neq 0$)
so it cannot be the $\Gamma$-limit of $\J_\eps$.
Note that this phenomenon is not restricted to Dirichlet conditions, but occurs whenever  $\alpha >\beta$.
We thus  prove:
\begin{theorem}\label{thm:4}
Let $s=2$, $\Omega$ be a bounded open set with $C^{1,\alpha}$ boundary. Assume further that 
$\alpha(x)$, $\beta(x)$ are bounded, Lipschitz non-negative functions defined on $\pa\Omega$ such that $\alpha(x)+\beta(x) \geq \sigma>0$ and  that
\begin{equation}\label{eq:n2}
\H^{n-2}\left(\partial \left\{\frac{\alpha}{\alpha+\beta}>\frac 1 2 \right\}\right)<\infty.
\end{equation}
Then the functional $ \J_\eps $ $\Gamma$-converges, when $\eps\to 0$ to 
$$ \F_{\frac{\alpha}{\alpha+\beta} }(E) = \frac 1 2 P(E,\Omega) + \int_{\pa \Omega } \min\left( \frac 1 2 ,\frac{\alpha(x)}{\alpha(x)+\beta(x)} \right) \chi_E(x) \, d\H^{n-1}(x)  .
$$

\end{theorem}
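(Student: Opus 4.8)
The plan is to rewrite $\J_\eps$ in a form that separates an interior ``nonlocal perimeter'' contribution from a ``boundary defect'' one. Since $1-\int_\Omega K_\eps(x,y)\,dy\ge0$, writing $\P(x)=\int_\Omega K_\eps(x,y)\chi_E(y)\,dy$ gives
$$\J_\eps(E)=\eps^{-1}\underbrace{\int_\Omega\!\int_\Omega \chi_E(x)\bigl(1-\chi_E(y)\bigr)K_\eps(x,y)\,dx\,dy}_{=:\,I_\eps(E)}\ +\ \eps^{-1}\underbrace{\int_\Omega\chi_E(x)\Bigl(1-\int_\Omega K_\eps(x,y)\,dy\Bigr)dx}_{=:\,II_\eps(E)},$$
both terms being non-negative. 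Integrating the equation for $K_\eps$ over $\Omega$ and using the Robin condition yields, when $\beta>0$, $1-\int_\Omega K_\eps(x,y)\,dy=\eps\int_{\pa\Omega}\frac{\alpha}{\beta}K_\eps(x,z)\,d\H^{n-1}(z)$, hence $\eps^{-1}II_\eps(E)=\int_{\pa\Omega}\frac{\alpha}{\beta}\,\P(z)\,d\H^{n-1}(z)$ (the Dirichlet case $\beta=0$ is handled through the normal flux of $\P$ on $\pa\Omega$ instead). This is the alternative formula of Proposition~\ref{prop:alt}, and it makes transparent the mechanism behind the capping at $\tfrac12$: near $\pa\Omega$ a set $E$ can either stay close to $\Omega$, paying roughly $\tfrac{\alpha}{\alpha+\beta}$ per unit area through $II_\eps$, or detach from $\pa\Omega$, paying roughly $\tfrac12$ per unit area through $I_\eps$ (the cost of a flat interface), whichever is cheaper.

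For the $\Gamma$-$\limsup$ inequality I would argue by relaxation. Taking $E_\eps\equiv E$, Proposition~\ref{prop:4} gives $\limsup_\eps\J_\eps(E)\le\J_0(E):=\tfrac12 P(E,\Omega)+\int_{\pa\Omega}\frac{\alpha}{\alpha+\beta}\chi_E\,d\H^{n-1}$ whenever $P(E,\Omega)<\infty$; since $\Gamma$-$\limsup_\eps\J_\eps$ is lower semicontinuous it is dominated by the $L^1(\Omega)$-lower semicontinuous envelope $\overline{\J_0}$ of $\J_0$, so it suffices to show $\overline{\J_0}\le\F_{\alpha/(\alpha+\beta)}$. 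Given $E$ with finite perimeter, set $A:=\{\alpha/(\alpha+\beta)>\tfrac12\}\subset\pa\Omega$ and $E^\delta:=E\setminus N_\delta(\overline A)$, the $\delta$-neighborhood of $\overline A$ removed. Then $E^\delta\to E$ in $L^1$, $P(E^\delta,\Omega)<\infty$, and as $\delta\to0$ the boundary integral of $E^\delta$ charges only $\pa\Omega\setminus\overline A$, where $\tfrac{\alpha}{\alpha+\beta}=\min(\tfrac12,\tfrac{\alpha}{\alpha+\beta})$, while $P(E^\delta,\Omega)$ is $P(E,\Omega)$ plus the ``inner copy'' $\{\dist(\cdot,\overline A)=\delta\}\cap E^{(1)}$, whose $\H^{n-1}$-measure tends to $\H^{n-1}(A\cap\{\mathrm{tr}\,\chi_E=1\})$ by the $\BV$ trace theorem and the $C^{1,\alpha}$-regularity of $\pa\Omega$, plus a ``corner'' region near $\pa A$ of measure $O(\delta\,\H^{n-2}(\pa A))$ that vanishes by hypothesis \eqref{eq:n2}. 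This yields $\limsup_{\delta\to0}\J_0(E^\delta)\le\tfrac12 P(E,\Omega)+\int_{\pa\Omega}\min(\tfrac12,\tfrac{\alpha}{\alpha+\beta})\chi_E\,d\H^{n-1}=\F_{\alpha/(\alpha+\beta)}(E)$, hence $\overline{\J_0}\le\F_{\alpha/(\alpha+\beta)}$; a diagonal extraction then turns $\{E^\delta\}$ into the recovery sequence $E_\eps=E^{\delta(\eps)}$.

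For the $\Gamma$-$\liminf$ inequality, fix $E_\eps\to E$ in $L^1(\Omega)$ with $\liminf_\eps\J_\eps(E_\eps)<\infty$, and split $\Omega$ into $\omega_\rho:=\{\dist(\cdot,\pa\Omega)>2\rho\}$ and $N_\rho:=\{\dist(\cdot,\pa\Omega)<\rho\}$, which are disjoint, so $I_\eps(E_\eps)\ge\int_{\omega_\rho\times\omega_\rho}(\cdots)+\int_{N_\rho\times N_\rho}(\cdots)$. On $\omega_\rho$ one has $K_\eps\ge K_\eps^{\mathrm{Dir}}\ge \eps^{-n}K(\tfrac{x-y}{\eps})-O(\eps^{-n}e^{-c\rho/\eps})$, the correction being exponentially small because $s=2$, so the lower semicontinuity of the whole-space nonlocal perimeter (\cite{BP}, see also \cite{davila,BBM}) gives $\liminf_\eps\eps^{-1}\int_{\omega_\rho\times\omega_\rho}\chi_{E_\eps}(x)(1-\chi_{E_\eps}(y))K_\eps\ge\tfrac12 P(E,\omega_\rho)$, and $P(E,\omega_\rho)\uparrow P(E,\Omega)$ as $\rho\to0$. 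For the boundary contribution one must bound below $\eps^{-1}\int_{N_\rho\times N_\rho}\chi_{E_\eps}(x)(1-\chi_{E_\eps}(y))K_\eps\,dx\,dy+\int_{\pa\Omega}\frac{\alpha}{\beta}\P\,d\H^{n-1}$ (the second term depending, up to exponentially small error, only on $E_\eps\cap N_\rho$). Using a finite partition of unity and the $C^{1,\alpha}$ flattening of $\pa\Omega$, then dilating by $\eps$ and using the convergence of the rescaled kernels $\eps^n K_\eps(x_0+\eps w,x_0+\eps v)\to K_1^{x_0}(w,v)$, the kernel of $(-\Delta+1)$ on the half-space $\{w\cdot n(x_0)<0\}$ with Robin datum $(\alpha(x_0),\beta(x_0))$, reduces the matter to a half-space cell problem; integrating its value against $\mathrm{tr}\,\chi_E$ over $\pa\Omega$ produces the boundary term $\int_{\pa\Omega}\min(\tfrac12,\tfrac{\alpha}{\alpha+\beta})\chi_E\,d\H^{n-1}$, and letting $\rho\to0$ and adding the interior term gives $\liminf_\eps\J_\eps(E_\eps)\ge\F_{\alpha/(\alpha+\beta)}(E)$.

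The cell problem is where I expect the real difficulty: for $H=\{z_n>0\}$ and the kernel $K_1$ of $(-\Delta+1)$ with Robin datum $(a,b)$, one wants every $v\colon H\to[0,1]$ with $v\to1$ at infinity to satisfy $\int_H\!\int_H v(x)(1-v(y))K_1+\int_H v(x)\bigl(1-\int_H K_1(x,y)\,dy\bigr)dx\ \ge\ \min\!\bigl(\tfrac12,\tfrac{a}{a+b}\bigr)$ per unit $(n-1)$-area. The inequality $\le$ is easy ($v\equiv1$ costs the total defect $\tfrac{a}{a+b}$, while $v=\chi_{\{z_n>h\}}$ with $h\to\infty$ costs the flat-interface energy $\tfrac12$). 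For the reverse inequality, the functional is concave in $v$ (its quadratic part has a sign since $K_1$ is the kernel of a positive operator), so the infimum is attained at a characteristic function $v=\chi_F$, and one must show $\int_F\int_{H\setminus F}K_1+\int_F(1-\int_H K_1)\ge\min(\tfrac12,\tfrac{a}{a+b})$ per unit area, distinguishing whether $F$ stays close to the full half-space near $\{z_n=0\}$ (so the defect term already yields $\tfrac{a}{a+b}$) or develops a transition layer (so the nonlocal-perimeter part yields at least $\tfrac12$, by a slicing/comparison estimate against the whole-space kernel). The remaining points — convergence and uniform decay of the rescaled kernels near $\pa\Omega$, localization of $\P(z)$, the lower semicontinuity step on $\omega_\rho$, and checking that no loss occurs as $\rho\to0$ in the boundary layer (relevant precisely where $\alpha>\beta$) — are more routine; \eqref{eq:n2} enters only through the recovery-sequence construction and the lower semicontinuity of $\F_{\alpha/(\alpha+\beta)}$.
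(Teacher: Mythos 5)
Your $\Gamma$-$\limsup$ argument is essentially the paper's: the recovery sets $E^\delta=E\setminus N_\delta(\overline A)$ with $A=\{\frac{\alpha}{\alpha+\beta}>\frac12\}$ are the same as the paper's $E_\eps=E\cap\Lambda_\eps$, hypothesis \eqref{eq:n2} is used for the same purpose (controlling the lateral collar near $\pa A$), and packaging the construction as ``relaxation of the pointwise limit $\J_0$ from Proposition \ref{prop:4}, plus a diagonal extraction'' is a legitimate reorganization. The decomposition $\J_\eps=\eps^{-1}I_\eps+\eps^{-1}II_\eps$ with $\eps^{-1}II_\eps=\int_{\pa\Omega}\frac{\alpha}{\beta}\P\,d\H^{n-1}$ is a correct identity, though it is not the formula of Proposition \ref{prop:alt} (which involves $\eps^{-1}\int|\chi_E-\P|^2+\eps\int|\na\P|^2$ and $\int_{\pa\Omega}\frac{\alpha}{\beta}|\P|^2$); that distinction turns out to matter below.

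The genuine gap is in the $\Gamma$-$\liminf$. Your strategy (interior comparison with the whole-space kernel on $\omega_\rho$, plus a blow-up near $\pa\Omega$ reducing to a half-space cell problem) leaves unproved exactly the step that carries all the difficulty: the lower bound $\min(\frac12,\frac{a}{a+b})$ per unit area for the half-space cell problem, for \emph{arbitrary} admissible configurations. The dichotomy ``$F$ stays close to the full half-space'' versus ``$F$ develops a transition layer'' is not a proof: a general competitor interpolates between the two regimes (partial detachment, oscillation at scale $O(1)$), and you need a quantitative mechanism that forces the \emph{sum} of the interfacial and defect contributions to exceed the minimum of the two pure costs. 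Moreover, the blow-up itself is not set up: since the sets $E_\eps$ vary, you cannot evaluate the cell problem ``at points of $\pa^*E$'' and integrate against $\mathrm{tr}\,\chi_E$; you would need the full Fonseca--M\"uller localization (weak-$*$ limit of the energy measures, Radon--Nikodym derivatives with respect to $|D\chi_E|\llcorner\Omega$ and $\H^{n-1}\llcorner\pa\Omega$), none of which is in the proposal. The concavity/extreme-point reduction also needs care on an unbounded cell.

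The paper avoids all of this with an elementary pointwise device. From \eqref{eq:Jepsrb} and the Robin condition one gets two lower bounds simultaneously: a ``Neumann-type'' bound $\J_\eps(E_\eps)\ge\int_\Omega|\na F(\Pe)|\,dx$ with $F(t)=\int_0^t 2\min(\tau,1-\tau)\,d\tau$, and a ``Dirichlet-type'' bound obtained by extending $\Pe$ by zero, $\J_\eps(E_\eps)\ge\int_{\R^n}|\na F(\overline{\Pe})|\,dx+\int_{\pa\Omega}G(\Pe)\,d\H^{n-1}$ with $G(t)=\frac{\alpha}{\beta}t^2-F(t)\ge\min\{0,\frac{\alpha}{\alpha+\beta}-\frac12\}$. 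Interpolating the two with a cutoff $\vphi$ and using $BV$ compactness and lower semicontinuity of the total variation produces the capped boundary density $\min(\frac12,\frac{\alpha}{\alpha+\beta})$ directly, with no cell problem and no case analysis on the geometry of $E_\eps$ near $\pa\Omega$. If you want to keep your route, the half-space lower bound must be proved in full; as written, the $\liminf$ half of the theorem is not established.
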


We note that for Dirichlet boundary conditions (and more generally when $\alpha\geq \beta$ on $\pa\Omega$), 
we have $ \F_{\frac{\alpha}{\alpha+\beta} } (E) = \frac 1 2 P(E)$ while with Neumann boundary conditions ($\alpha=0$) we find $ \F_{0 }(E) = \frac 1 2 P(E,\Omega)$.

\medskip

\subsection{Generalization: General elliptic operators}
It is relatively easy to generalize this result to more general elliptic operators in divergence form: given a continuous function $x\mapsto A(x)$ with $A(x)$ symmetric matrix satisfying
$$ \lambda I_n \leq A \leq \Lambda I_n$$
for $\lambda,\Lambda>0$. We can replace \eqref{eq:02}-\eqref{eq:bc0} with
\begin{equation}\label{eq:022}
\begin{cases}
 \phi -\eps^2\div(A \na \phi) = \chi_E \qquad & \mbox{ in } \Omega\\
\alpha \phi +\beta  \eps A \nabla \phi\cdot \frac{n}{\|n\|_A} = 0  \quad & \mbox{ on } \partial \Omega
\end{cases}
\end{equation}
with $\| x \|_{A} = \sqrt{x^T A x}$ for $x\in \R^n$. Then the same proofs (see Remarks \ref{rem:A1} and \ref{rem:A2}) show that the corresponding  functional $ \J_\eps $ $\Gamma$-converges, when $\eps\to 0$ to 
$$  \frac 1 2 P^A(E,\Omega) + \int_{\pa \Omega } \min\left( \frac 1 2 ,\frac{\alpha}{\alpha+\beta } \right) \chi_E(x) \| n\|_A \, d\H^{n-1}(x)  
$$
where the anisotropic perimeter is defined by
$$
P^A(E,\Omega) = \int_{\pa^* E} \| \nu_E(x) \|_ A d\H^{n-1}(x)
$$ 
or equivalently:
$$ P^A(E,\Omega)  = \int_\Omega \|D\chi_E\|_A
=\sup\left\{\int_{\Omega} \chi_E \, \mathrm{div}\, g\, dx \,;\, g\in [C^1_0(\Omega)]^n, \; |A^{-1}g(x)|\leq 1  \; \forall x \in\Omega\right\}.
$$
Similarly, in the nonlocal case $s\in(0,1)$, we can use the anisotropic  integro-differential operators 
$$
\mathcal L^{s/2}_A[\phi] = 
\mathrm{PV} \int_{\R^n}  \frac{u(x) - u(y)}{\|x-y\|_A^{n+s}} \, dy
$$
and similar results can be derived, with the $s$-perimeter replaced by the anisotropic fractional perimeter
$$
P_s^A(E) := \int_{\R^n} \int_{\R^n} \frac{\chi_E (x) \chi_{\C E}(y) }{\|x-y\|_A^{n+s}} \, dx \, dy = \frac 1 2  \int_{\R^n} \int_{\R^n}  \frac{|\chi_E(x) - \chi_E(y)|}{\|x-y\|_A^{n+s}} \, dx \, dy 
$$
and the potential  $\psi_F$ by $\psi^A_F (x): = \int_{F} \frac{1}{\|x-y\|_A^{n+s}}\, dy $.
 
\subsection{The case $s\in[1,2)$.}
When $\Omega = \R^n$ and $s\in(1,2)$, Theorem \ref{thm:per1} applies and gives 
$$ 
\lim_{\eps \to 0 } \J^s_\eps (E) = \sigma_{s,n} P(E)
$$
and the  $\Gamma$-limit is proved in \cite{BP}. When $\Omega$ is a subset of $\R^n$, the boundary conditions play a role.
We will not investigate this case in details, like we did for the case $s=2$ above. But for the sake of completeness, we prove that the scaling given in \eqref{eq:defJJ} is the correct one by proving the convergence of $\J_\eps^s(E)$ when  \eqref{eq:0} is supplemented with either Dirichlet or Neumann boundary conditions, that is when $\P$ solves
\eqref{eq:bc} with $\alpha\equiv0$ or $\beta\equiv0$.
We then have:
\begin{proposition}\label{prop:12}
Let $\Omega$ be a bounded open set with Lipschitz boundary and let $\J_\eps^s$ be defined by 
$$
\J^s_\eps(E):=
\begin{cases}
\displaystyle  \eps^{-1}|\ln\eps|^{-1}\int_\Omega \chi_E (1-\P)\, dx   & \mbox{ if } s=1\\[5pt]
\displaystyle  \eps^{-1}\int_\Omega \chi_E (1-\P)\, dx  & \mbox{ if } s\in(1,2)
  \end{cases}
  $$
where $\P$ solves \eqref{eq:bc} with either $\alpha\equiv0$ or $\beta\equiv0$.

There exists some positive constants $ \sigma^1_{n,s}$, $ \sigma^2_{n,s}$ (satisfying $\sigma^1_{n,1}= \sigma^2_{n,1}$) such that for any 
 set $E\subset \Omega$ such that $P(E,\Omega)<\infty$, we have 
\item[(i)] If $\beta\equiv0$ (Dirichlet boundary conditions):
$$\lim_{\eps \to 0 } \J^s_\eps (E) = \sigma^1_{n,s}  P(E,\Omega) + \sigma^2_{n,s} \int_{\pa \Omega} \chi_E  \, d \H^{n-1}
$$
\item[(i)] If $\alpha\equiv0$ (Neumann boundary conditions):
$$\lim_{\eps \to 0 } \J^s_\eps (E) = \sigma^1_{n,s}  P(E,\Omega) .
$$
\end{proposition}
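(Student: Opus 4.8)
\emph{Proof strategy.} The plan is to reduce the bulk integral defining $\J^s_\eps$ to a Gagliardo-type double integral, and then to separate the contribution of a thin neighborhood of $\partial E$ inside $\Omega$ (which produces the relative perimeter $P(E,\Omega)$) from that of a thin neighborhood of $\partial\Omega$ (which produces the boundary term for Dirichlet data, and vanishes for Neumann data).

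First I would use the \emph{alternative formula}: testing the weak formulation of \eqref{eq:bc} with $v=\chi_E$ — which is admissible both for Dirichlet data (there $\P\equiv0$ in $\C\Omega$) and for Neumann data (there, since $\chi_E$ vanishes in $\C\Omega$, the $\C\Omega\times\C\Omega$ interactions excluded from the energy play no role) — and using $\int_\Omega\chi_E^2=|E|$ gives in both cases
$$
\int_\Omega\chi_E(1-\P)\,dx=c_{n,s}\,\eps^s\int_E\int_{\R^n\setminus E}\frac{\P(x)-\P(y)}{|x-y|^{n+s}}\,dy\,dx ,
$$
which is the content of Proposition~\ref{prop:alt}. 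Splitting $\R^n\setminus E=(\Omega\setminus E)\cup\C\Omega$, I then write $\J^s_\eps(E)=\tau_\eps^{-1}(I_\eps+B_\eps)$ with $\tau_\eps=\eps|\ln\eps|$ for $s=1$ and $\tau_\eps=\eps$ for $s\in(1,2)$, where
$$
I_\eps=c_{n,s}\,\eps^s\int_E\int_{\Omega\setminus E}\frac{\P(x)-\P(y)}{|x-y|^{n+s}}\,dy\,dx,
\qquad
B_\eps=c_{n,s}\,\eps^s\int_E\int_{\C\Omega}\frac{\P(x)-\P(y)}{|x-y|^{n+s}}\,dy\,dx .
$$
Throughout I would use $0\le\P\le1$ (maximum principle), $\P\to\chi_E$ in $L^1_\loc(\Omega)$, and decay estimates for $\P$ near $\partial\Omega$ at scale $\eps$ (obtained by comparing $\P$ with the whole-space and half-space solutions of the same equation).

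For the interior term the claim is $\tau_\eps^{-1}I_\eps\to\sigma^1_{n,s}P(E,\Omega)$. When $s\in(1,2)$, Lemma~\ref{lem:K} gives $\int_0^\infty k(r)r^n\,dr<\infty$, so, after localizing near $\partial^*E\cap\Omega$ and replacing $\P$ there by the whole-space solution (the difference being negligible after rescaling since boundary effects are exponentially/polynomially far), this is essentially D\'avila's Theorem~\ref{thm:per1} and yields the constant $\sigma^1_{n,s}$. When $s=1$, Theorem~\ref{thm:per1} does not apply (the kernel is critically non-integrable, cf.\ Lemma~\ref{lem:K}), and I would instead blow up at a point $x_0\in\partial^*E\cap\Omega$: $\P(x_0+\eps\,\cdot\,)$ converges to the whole-space profile $\Phi$ with $\Phi+(-\Delta)^{1/2}\Phi=\chi_{\{z_n>0\}}$, which satisfies $|\Phi(z)-\chi_{\{z_n>0\}}(z)|\asymp|z_n|^{-1}$ for large $|z_n|$, so the rescaled double integral diverges logarithmically with cut-off at the macroscopic scale $\sim1/\eps$; dividing by $\tau_\eps=\eps|\ln\eps|$ leaves the finite constant $\sigma^1_{n,1}$ times $\H^{n-1}(\partial^*E\cap\Omega)$. (This critical-scaling computation in $\R^n$ underlies the asymptotics stated after Lemma~\ref{lem:K} and in \cite{muratovsimon}.)

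The delicate part is the boundary term. \emph{For Dirichlet data} ($\beta\equiv0$), $\P\equiv0$ in $\C\Omega$, hence $B_\eps=c_{n,s}\eps^s\int_E\P(x)\,\psi_{\C\Omega}(x)\,dx$ with $\psi_{\C\Omega}(x)=\int_{\C\Omega}|x-y|^{-n-s}\,dy\asymp\dist(x,\partial\Omega)^{-s}$. Since $0\le\P\le1$ and $\eps^s=o(\tau_\eps)$, the part of this integral with $\dist(x,\partial\Omega)\ge\delta$ is $o(\tau_\eps)$ for each $\delta>0$, so only a shrinking neighborhood of $\partial\Omega$ matters; there, at $\H^{n-1}$-a.e.\ $x_0\in\partial\Omega$ (using that a Lipschitz boundary is differentiable $\H^{n-1}$-a.e.) one has $\P(x_0+\eps z)\to\chi_E(x_0)\,\psi_\infty(z_n)$, where $\psi_\infty$ is the bounded solution of $\psi+(-\Delta)^{s/2}\psi=1$ in $\{z_n>0\}$ with $\psi\equiv0$ in $\{z_n<0\}$, $\chi_E(x_0)\in\{0,1\}$ is the $\Omega$-sided trace of $\chi_E$, and $\psi_\infty(t)\asymp t^{s/2}$ as $t\to0^+$ while $\psi_\infty(t)\to1$ as $t\to\infty$. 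Substituting $\dist(x,\partial\Omega)=\eps t$ turns the slice of $\tau_\eps^{-1}B_\eps$ over $x_0$ into a constant multiple of $\tau_\eps^{-1}\eps\int_0^{\sim1/\eps}\chi_E(x_0)\psi_\infty(t)t^{-s}\,dt$ (up to a geometric Jacobian $\to1$), which is bounded uniformly in $\eps$ because $\psi_\infty(t)t^{-s}\in L^1(0,\infty)$ when $s\in(1,2)$, respectively $\int_0^{1/\eps}\psi_\infty(t)t^{-s}\,dt\sim|\ln\eps|$ when $s=1$; dominated convergence over $\partial\Omega$ then gives $\tau_\eps^{-1}B_\eps\to\sigma^2_{n,s}\int_{\partial\Omega}\chi_E\,d\H^{n-1}$, and the equality $\sigma^1_{n,1}=\sigma^2_{n,1}$ reflects that at $s=1$ both constants come from the same divergent integral $\int t^{-1}\,dt$. \emph{For Neumann data} ($\alpha\equiv0$), $\P$ on $\C\Omega$ is a weighted average of interior values of $\P$, so near $\H^{n-1}$-a.e.\ $x_0\in\partial\Omega$ (where $E$ has $\Omega$-sided density $0$ or $1$) both $\P(x)$ and $\P(y)$ appearing in $B_\eps$ converge, after rescaling, to the same constant; controlling their difference there by the interior decay estimate shows the integrand of $B_\eps$ concentrates in an $O(\eps)$-neighborhood of $\partial^*E\cap\partial\Omega$, a set of codimension two, so $B_\eps=o(\tau_\eps)$ and $\tau_\eps^{-1}B_\eps\to0$. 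Adding the two limits gives the stated formulas for any $E$ with $P(E,\Omega)<\infty$, $E$ being taken with the representative whose $\Omega$-sided trace on $\partial\Omega$ is defined $\H^{n-1}$-a.e. The main obstacle I anticipate is precisely this boundary-layer analysis: establishing, with enough uniformity for dominated convergence, the rescaled convergence of $\P$ to $\psi_\infty$ near $\partial\Omega$ in the Dirichlet case, and the cancellation $\P(x)-\P(y)\to0$ near $\partial\Omega\setminus\partial^*E$ in the Neumann case, while tracking the logarithm uniformly when $s=1$.
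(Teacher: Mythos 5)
Your outline is correct, but it takes a genuinely different route from the paper. The paper exploits the divergence structure of the fractional Laplacian, writing $(-\Delta)^{s/2}=-\div D^{s-1}$ for $s\in(1,2)$ and $-\div H$ for $s=1$, so that $\J^s_\eps(E)$ collapses to a single surface integral $-\eps^{s-1}\int_{\pa^*E}D^{s-1}[\P]\cdot\nu_E\,d\H^{n-1}$ (resp.\ its $s=1$ analogue) over the reduced boundary; the limit is then read off pointwise on $\pa^*E$ by blowing up the fractional gradient (Lemmas \ref{lem:dfg} and \ref{lem:ghj}), with one uniform bound feeding dominated convergence. In that formulation the interior and boundary constants, the identity $\sigma^1_{n,1}=\sigma^2_{n,1}$, and the vanishing of the Neumann boundary contribution (the blow-up profile is $w_0\equiv1$, whose fractional gradient is zero) are all transparent. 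You instead keep the volume formulation, test the equation with $\chi_E$ (note: this is the identity used at the start of the proofs of Theorems \ref{thm:1} and \ref{thm:2}, not Proposition \ref{prop:alt}, which tests with $\P$ itself), and split the resulting double integral into $\Omega\times\Omega$ and $\Omega\times\C\Omega$ pieces. This gives a more explicit picture of where the Dirichlet boundary term comes from (the weight $\psi_{\C\Omega}\asymp d^{-s}$ against the boundary layer $\psi_\infty(d/\eps)$, with the borderline integrability of $\psi_\infty(t)t^{-s}$ at $t=\infty$ producing the logarithm at $s=1$), at the price of two estimates you flag but do not carry out, and which is where essentially all the work lies: (a) reducing the $\Omega\times\Omega$ term to D\'avila/Theorem \ref{thm:per1} even though $\P$, unlike $K_\eps*\chi_E$, depends on the boundary condition --- the correction is $O(1)$ in an $\eps$-neighborhood of $\pa\Omega$ and, for $s\in(1,2)$, the kernel's polynomial tails make the localization to $\Omega_\delta$ a genuine estimate rather than a remark; and (b) the Neumann claim $B_\eps=o(\tau_\eps)$, which needs a quantitative rate for $\P(x)-\P(y)\to0$ near density-$0/1$ boundary points (the crude bound $|\P(x)-\P(y)|\le1$ only gives $O(\tau_\eps)$ there), not just the pointwise blow-up limit. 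Both are believable and comparable in difficulty to the steps the paper itself leaves at sketch level, so I would not call this a gap, but they are the parts that must be made precise; the paper's surface-integral reformulation is what lets it avoid them.
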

We do not attempt to derive explicit formula for the constants $\sigma^1_{n,s}$ and $ \sigma^2_{n,s}$, except  
when $s=1$, where the proof gives
$$ \sigma^1_{n,1}= \sigma^2_{n,1}=\frac {c_{n,1}} 2 \int_{\partial B_1} |e\cdot y|d\H^{n-1}(y).$$
In particular, the limit with Dirichlet conditions is proportional to $P(E)$ when $s=1$.

\medskip

\subsection{An energy functional with competing local and nonlocal terms}
Finally, we go back to our initial motivation for studying this problem which arises in the context of a model for cell motility (see \cite{MW}) involving the functional 
$$ P(E) - \beta P_{K_\eps}(E) .$$
We note that the nonlocal perimeter appears now as a destabilizing term. When $\eps\ll1$ the results above show that the critical regime corresponds to $\beta \sim \eps^{-s}$ if $s\in(0,1)$ or $\beta \sim \eps^{-1}$ if $s\in (1,2]$.

We study this problem for $s\in(0,1)$ and for $s=2$. We fix $t>0$ and 
 define the functional $\G^s_\eps(E)$ by
\[
\G^s_\eps(E)=\displaystyle P(E,\Omega)- t \J_\eps^s(E)\qquad  \mbox{ when $s\in(0,1)$}
\]
and 
\[
\G_\eps(E)=\displaystyle P(E,\Omega)- t  \J_\eps(E) \qquad \mbox{ when $s=2$.}
\]

When $\Omega= \R^n$, we prove:
\begin{theorem}\label{thm:gammaRn}
Let $s\in(0,1)$ and $\Omega=\R^n$. Then the functional $ \G^s_\eps $ $\Gamma$-converges, when $\eps\to 0$ to 
$$
\G^s_0(E)=P(E)-tc_{n,s} P_s(E)
$$
for all $t>0$.
\end{theorem}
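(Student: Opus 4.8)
\emph{Strategy.} Since $\G^s_\eps=P-t\,\J^s_\eps$ and Theorem~\ref{thm:1} already gives the $\Gamma$-convergence of $\J^s_\eps$ to $c_{n,s}P_s$, the only genuine issue is that the destabilizing term $-t\,\J^s_\eps$ must be controlled \emph{from above} along arbitrary converging sequences, which Theorem~\ref{thm:1}(ii) does not provide. The plan is to upgrade Theorem~\ref{thm:1} to two facts on the class of sets of finite perimeter and finite measure. First, a \emph{uniform proximity estimate}. Writing (cf.~\eqref{eq:VK} and Proposition~\ref{prop:alt}) $\J^s_\eps(E)=\frac12\int\int\eps^{-s}K_\eps(x-y)\,|\chi_E(x)-\chi_E(y)|\,dx\,dy$, using the pointwise bound $0\le\eps^{-s}K_\eps(z)\le C_{n,s}\,|z|^{-n-s}$ (a consequence of Lemma~\ref{lem:K} and standard bounds on the Bessel/stable kernel, e.g.\ via the subordination formula), and setting $\eps^{-s}K_\eps(z)-c_{n,s}|z|^{-n-s}=|z|^{-n-s}\gamma(|z|/\eps)$ with $\gamma(r)=r^{n+s}k(r)-c_{n,s}$ bounded and $\gamma(r)\to0$ as $r\to\infty$ (Lemma~\ref{lem:K}), one splits the integral at $|x-y|=\delta$ to obtain, for every $\delta>0$,
$$\bigl|\J^s_\eps(E)-c_{n,s}P_s(E)\bigr|\ \le\ C_{n,s}\,\delta^{1-s}\,P(E)\ +\ C_{n,s}\,\delta^{-s}\,\rho(\delta/\eps)\,|E|,$$
where the first term uses $\int\int_{|x-y|<\delta}\frac{|\chi_E(x)-\chi_E(y)|}{|x-y|^{n+s}}\,dx\,dy\le\frac{C_n}{1-s}\,\delta^{1-s}P(E)$ (here $s<1$ is essential) and the second comes from bounding $|\chi_E(x)-\chi_E(y)|\le\chi_E(x)+\chi_E(y)$ and rescaling, with $\rho(R)\to0$ as $R\to\infty$ by dominated convergence; in particular $\J^s_\eps(E)\le C_{n,s}P_s(E)$ for all $\eps,E$. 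Second, \emph{continuity of $P_s$}: if $E_k\to E$ in $L^1(\R^n)$ with $\sup_k(P(E_k)+|E_k|)<\infty$, then $P_s(E_k)\to P_s(E)$; this is standard and follows from the same splitting at scale $\delta$, the near part being $\le C_{n,s}\delta^{1-s}\sup_kP(E_k)$ uniformly in $k$ while the far part is continuous in $\chi_{E_k}$ for the $L^1(\R^n)$ topology.

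\emph{$\Gamma$-liminf.} Let $E_\eps\to E$ in $L^1(\R^n)$ and put $L=\liminf_\eps\G^s_\eps(E_\eps)$; we may assume $L<\infty$ and work along a subsequence with $\G^s_\eps(E_\eps)\to L$, so $\G^s_\eps(E_\eps)\le M$ eventually. Combining $\J^s_\eps(E_\eps)\le C_{n,s}P_s(E_\eps)$ with the fractional isoperimetric-type inequality $P_s(F)\le C(n,s)\,P(F)^s|F|^{1-s}$ (again a consequence of the near/far splitting) and Young's inequality (legitimate because $s<1$) gives $\G^s_\eps(E_\eps)\ge\frac12 P(E_\eps)-C\,|E_\eps|$, whence $\sup_\eps P(E_\eps)<\infty$ along the subsequence (recall $\sup_\eps|E_\eps|<\infty$). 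The uniform proximity estimate then yields $\J^s_\eps(E_\eps)-c_{n,s}P_s(E_\eps)\to0$, the continuity of $P_s$ yields $P_s(E_\eps)\to P_s(E)$, hence $\J^s_\eps(E_\eps)\to c_{n,s}P_s(E)$; together with the $L^1$-lower semicontinuity of $P$,
$$L=\liminf_\eps\bigl(P(E_\eps)-t\,\J^s_\eps(E_\eps)\bigr)\ \ge\ \liminf_\eps P(E_\eps)\ -\ t\,c_{n,s}P_s(E)\ \ge\ P(E)-t\,c_{n,s}P_s(E)=\G^s_0(E).$$
If instead $P(E)=\infty$, the same bound $\G^s_\eps(E_\eps)\ge\frac12 P(E_\eps)-C\,|E_\eps|$ and lower semicontinuity force $L=\infty$, so the inequality holds once $\G^s_0$ is (as is natural) extended by $+\infty$ on sets of infinite perimeter; note that $P(E)<\infty$ implies $P_s(E)<\infty$, so $\G^s_0$ is real-valued on sets of finite perimeter.

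\emph{$\Gamma$-limsup.} It suffices to build a recovery sequence for each $E$ with $P(E)<\infty$ (for $P(E)=\infty$ there is nothing to prove). Take the constant sequence $E_\eps\equiv E$: since $P(E)<\infty$ forces $P_s(E)\le C(n,s)P(E)^s|E|^{1-s}<\infty$, Theorem~\ref{thm:1}(i) gives $\J^s_\eps(E)\to c_{n,s}P_s(E)$, so $\G^s_\eps(E)=P(E)-t\,\J^s_\eps(E)\to P(E)-t\,c_{n,s}P_s(E)=\G^s_0(E)$ and in particular $\limsup_\eps\G^s_\eps(E_\eps)=\G^s_0(E)$.

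\emph{Main obstacle.} The crux is the uniform proximity estimate — upgrading the one-sided bound of Theorem~\ref{thm:1}(ii) to the genuine convergence $\J^s_\eps(E_\eps)\to c_{n,s}P_s(E)$ for sequences with equibounded perimeter and measure, which is precisely what lets the destabilizing term $-t\,\J^s_\eps$ be handled in the $\Gamma$-liminf. It is here that $s\in(0,1)$ is used decisively, both through the near-field bound $\delta^{1-s}P(E)$ and through the Young-inequality absorption of $P(E_\eps)$; the remaining ingredients (lower semicontinuity of $P$, continuity of $P_s$ along bounded-perimeter sequences, and the constant recovery sequence) are standard.
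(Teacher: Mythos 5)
Your proof is correct, and the overall architecture matches the paper's: both use the interpolation inequality $P_s(F)\le C(n,s)P(F)^s|F|^{1-s}$ plus Young's inequality to get $\G^s_\eps(E_\eps)\ge\frac12P(E_\eps)-C|E_\eps|$ and hence equiboundedness of the perimeters, then upgrade the one-sided $\liminf$ of Theorem \ref{thm:1}(ii) to the full convergence $\J^s_\eps(E_\eps)\to c_{n,s}P_s(E)$, and finish with lower semicontinuity of $P$; the recovery sequence is the constant one in both. Where you genuinely diverge is in how that full convergence is obtained. The paper stays with the potential representation \eqref{eq:int1}, writes $\J^s_{\eps_k}(E_{\eps_k})=c_{n,s}\int\phi^{\eps_k}_{E_{\eps_k}}(\chi_{E_{\eps_k}}\psi_{\C E_{\eps_k}}-\chi_{\C E_{\eps_k}}\psi_{E_{\eps_k}})$, controls the difference with the corresponding expression for the fixed set $E$ by $P_s(E_{\eps_k}\Delta E)\le CP(E_{\eps_k}\Delta E)^s|E_{\eps_k}\Delta E|^{1-s}\to0$, and then applies dominated convergence as in Theorem \ref{thm:1}(i); the only input on the kernel is $0\le\P\le1$ from the maximum principle. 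You instead work with the convolution form $\frac12\int\int\eps^{-s}K_\eps(x-y)|\chi_E(x)-\chi_E(y)|$ and prove a quantitative, set-uniform proximity estimate $|\J^s_\eps(E)-c_{n,s}P_s(E)|\le C\delta^{1-s}P(E)+C\delta^{-s}\rho(\delta/\eps)|E|$ via pointwise kernel bounds, together with the (standard) continuity of $P_s$ on sequences with equibounded perimeter and measure. Your route buys an explicit rate and decouples the two sources of error cleanly, but it leans on the \emph{global} bound $K(z)\le C|z|^{-n-s}$, which is true (e.g.\ by subordination) yet goes beyond the large-$|z|$ asymptotics actually stated in Lemma \ref{lem:K}; you should justify it rather than cite the lemma. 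The paper's route avoids all pointwise kernel estimates by design, which is precisely what allows the same argument to carry over to the bounded-domain case (Theorem \ref{thm:gammaOmega}), where $K_\eps(x,y)$ near $\pa\Omega$ is not explicit; your method would not transfer there without substantial extra work.
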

When $\Omega\neq \R^n$ and  $\P$ solves  \eqref{eq:bc}  (Robin boundary conditions), 
we have:
\begin{theorem}\label{thm:gammaOmega}
Let $\Omega$ be a  bounded subset of $\R^n$ such that $P(\Omega)<\infty$ and let $s\in (0,1)$. Assume further that $\alpha(x)$, $\beta(x)$ are bounded, Lipschitz non-negative functions such that $\alpha(x)+\beta(x) \geq \sigma>0.$
Then the functional $ \G^s_\eps $ $\Gamma$-converges, when $\eps\to 0$ to 
$$
\G^s_0(E)=P(E,\Omega)-tc_{n,s} \left[  P_s^L(E,\Omega) 
+ \int_{\C\Omega}  \frac{\alpha}{\alpha+\beta} \psi_{E }(x)\, dx   + \int_{\C\Omega}   \frac{\beta}{\alpha+\beta}	 \frac{ \psi_{E\cap\Omega} (x)\psi_{\C E \cap \Omega} (x)}{\psi_\Omega(x)} \, dx  \right]
$$
for all $t>0$.
\end{theorem}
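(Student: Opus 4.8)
The plan is to deduce the statement from the $\Gamma$-convergence $\J^s_\eps\to\J^s_0$ of Theorem~\ref{thm:2}, upgraded to a \emph{continuous} convergence along sequences of uniformly bounded perimeter, together with a soft a priori bound ruling out competitors of unbounded perimeter. Two preliminary facts support this. \emph{(a) A uniform upper bound.} Since $\phi^\eps_F$ minimizes the natural energy $\mathcal E^F_\eps$ associated to \eqref{eq:bc}, one has $\int_\Omega\chi_F(1-\phi^\eps_F)=|F|+2\min_\psi\mathcal E^F_\eps(\psi)$; testing with the admissible competitor $\psi=\chi_F$ (it vanishes on $\C\Omega$) the $\eps^s$ factors cancel and the quadratic form evaluated at $\chi_F$ equals $c_{n,s}\bigl(P_s^L(F,\Omega)+\int_F\psi_{\C\Omega}\bigr)$, so that
$$\J^s_\eps(F)=\eps^{-s}\!\int_\Omega\chi_F(1-\phi^\eps_F)\ \le\ c_{n,s}\Bigl(P_s^L(F,\Omega)+\int_F\psi_{\C\Omega}\Bigr)\ \le\ c_{n,s}\bigl(P_s^L(F,\Omega)+P_s(\Omega)\bigr)\ \le\ C(n,s,\Omega)\bigl(P(F,\Omega)^s+1\bigr)$$
for every $F\subset\Omega$ and $\eps>0$; here $\int_F\psi_{\C\Omega}=\int_{\C\Omega}\psi_F\le\int_{\C\Omega}\psi_\Omega=P_s(\Omega)<\infty$ (finite as $P(\Omega)<\infty$), and the last step uses $P_s^L(F,\Omega)\lesssim P_s(F)\lesssim|F|^{1-s}P(F,\R^n)^s$ with $P(F,\R^n)\le P(F,\Omega)+P(\Omega)$ — the exponent $s<1$ being essential. \emph{(b) Continuity of the limit.} If $E_\eps\to E$ in $L^1(\Omega)$ with $\sup_\eps P(E_\eps,\Omega)<\infty$, then $\J^s_0(E_\eps)\to\J^s_0(E)$: extended by $0$ to $\R^n$, $\{\chi_{E_\eps}\}$ is bounded in $\BV(\R^n)$ with support in $\overline\Omega$ (using $P(\Omega)<\infty$), hence precompact in $W^{s,1}$ by the compact embedding $\BV\hookrightarrow W^{s,1}$ on bounded sets for $s\in(0,1)$, and every cluster point being $\chi_E$ we get $P_s^L(E_\eps,\Omega)\to P_s^L(E,\Omega)$; the two boundary integrals pass to the limit by dominated convergence, using $0\le\psi_{E_\eps}\le\psi_\Omega$, $\psi_{\C\Omega}\in L^1(\Omega)$ and $\psi_{E\cap\Omega}\psi_{\C E\cap\Omega}/\psi_\Omega=\psi_E(1-\psi_E/\psi_\Omega)$.

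The heart of the matter is the \textbf{continuous convergence lemma}: if $E_\eps\to E$ in $L^1(\Omega)$ and $\sup_\eps P(E_\eps,\Omega)<\infty$, then $\J^s_\eps(E_\eps)\to\J^s_0(E)$. The bound $\liminf_\eps\J^s_\eps(E_\eps)\ge\J^s_0(E)$ is exactly Theorem~\ref{thm:2}(ii); for the matching upper bound I would revisit the proof of Theorem~\ref{thm:2}(i) and make its error terms uniform over $\{F:P(F,\Omega)\le M\}$. By the alternative formula of Proposition~\ref{prop:alt}, $\J^s_\eps(F)$ is a bulk term $\tfrac12\iint\frac{g(|x-y|/\eps)}{|x-y|^{n+s}}\,|\chi_F(x)-\chi_F(y)|$ over $(\R^n)^2\setminus(\C\Omega)^2$ — with $g(r)=r^{n+s}k(r)$ bounded and $g(r)\to c_{n,s}$ as $r\to\infty$ by Lemma~\ref{lem:K} — plus an $\eps$-dependent boundary term. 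Writing $g(|x-y|/\eps)=c_{n,s}+(g(|x-y|/\eps)-c_{n,s})$ and cutting at scale $|x-y|=\eps^{1/2}$, the short-range piece is $\lesssim\eps^{(1-s)/2}P(F,\R^n)$ (the classical control of the fractional perimeter by the usual one) and the long-range error is $\lesssim\sup_{r\ge\eps^{-1/2}}|g(r)-c_{n,s}|\cdot P_s(F)$; since $P(F,\R^n)\le P(F,\Omega)+P(\Omega)$ and $P_s(F)\lesssim(P(F,\Omega)+P(\Omega))^s$ are uniformly bounded along the sequence, both tend to $0$ uniformly, the leading bulk term being $c_{n,s}P_s^L(E_\eps,\Omega)+o(1)$; a parallel argument treats the boundary contribution, recovering the remaining integrals of $\J^s_0(E_\eps)$, so that altogether $\J^s_\eps(E_\eps)=\J^s_0(E_\eps)+o(1)\to\J^s_0(E)$ by (b). \emph{This uniform-in-$F$ control of the defect $\J^s_\eps(F)-\J^s_0(F)$ over sets of bounded perimeter is the main obstacle;} everything else is soft.

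Granting the lemma, the two $\Gamma$-inequalities follow. For the $\limsup$ (recovery) inequality: if $P(E,\Omega)=\infty$ there is nothing to prove, and otherwise $P_s^L(E,\Omega)<\infty$, so the constant sequence $E_\eps\equiv E$ gives $\G^s_\eps(E)=P(E,\Omega)-t\J^s_\eps(E)\to P(E,\Omega)-t\J^s_0(E)=\G^s_0(E)$ by Theorem~\ref{thm:2}(i). For the $\liminf$ inequality it suffices to bound below every subsequential limit of $\G^s_\eps(E_\eps)$; fix a subsequence along which $\G^s_\eps(E_\eps)\to L\in[-\infty,+\infty]$, the case $L=+\infty$ being trivial. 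Otherwise (a) gives $\G^s_\eps(E_\eps)\ge P(E_\eps,\Omega)-tC(P(E_\eps,\Omega)^s+1)$, whose right-hand side diverges as $P(E_\eps,\Omega)\to\infty$ since $s<1$, forcing $\sup_\eps P(E_\eps,\Omega)<\infty$ along the subsequence; then $P(E,\Omega)<\infty$ by lower semicontinuity of the perimeter, the lemma yields $\J^s_\eps(E_\eps)\to\J^s_0(E)$, and
$$L=\lim_{\eps\to0}\bigl(P(E_\eps,\Omega)-t\J^s_\eps(E_\eps)\bigr)\ \ge\ \bigl(\liminf_{\eps\to0}P(E_\eps,\Omega)\bigr)-t\,\J^s_0(E)\ \ge\ P(E,\Omega)-t\,\J^s_0(E)\ =\ \G^s_0(E).$$
In particular, if $P(E,\Omega)=\infty$ the case $L<+\infty$ is impossible, so $\G^s_\eps(E_\eps)\to+\infty=\G^s_0(E)$. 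The only ingredient going beyond Theorem~\ref{thm:2} is thus the uniform defect estimate in the continuous convergence lemma.
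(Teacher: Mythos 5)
Your overall architecture matches the paper's: the $\limsup$ inequality via the constant recovery sequence together with the a priori bound $\G^s_0(E)\geq \frac12 P(E,\Omega)-C$ from the interpolation inequality \eqref{eq:PsLP}; and the $\liminf$ inequality by first forcing $\sup_k P(E_{\eps_k},\Omega)<\infty$ along a subsequence where $\G^s_{\eps_k}(E_{\eps_k})$ is bounded (using the sublinear-in-perimeter bound on $\J^s_\eps$, with $s<1$ essential), then invoking a ``continuous convergence'' statement $\J^s_{\eps_k}(E_{\eps_k})\to\J^s_0(E)$ and lower semicontinuity of the perimeter. Step (a) is correct in substance (the paper obtains the same bound $\J^s_\eps(F)\leq c_{n,s}(P_s^L(F,\Omega)+P_s(\Omega))$ more directly from the identity \eqref{eq:introbin} and $0\leq\P\leq1$, without setting up the variational formulation of the Robin problem), and the extraction of the uniform perimeter bound is fine.

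The gap is in your proof of the continuous convergence lemma, which you yourself flag as the main obstacle. You write $\J^s_\eps(F)$ as $\tfrac12\iint \frac{g(|x-y|/\eps)}{|x-y|^{n+s}}|\chi_F(x)-\chi_F(y)|$ plus a boundary term, with $g(r)=r^{n+s}k(r)\to c_{n,s}$ from Lemma \ref{lem:K}. This representation only exists when $\Omega=\R^n$: Lemma \ref{lem:K} describes the translation-invariant kernel $K_\eps(x,y)=\eps^{-n}K((x-y)/\eps)$ on the whole space, whereas for the Robin problem in a bounded domain the Green's function is neither radial nor translation-invariant, and its behavior near $\pa\Omega$ is precisely what the paper declares ``not so easy to determine'' and deliberately avoids. (You also misattribute this formula to Proposition \ref{prop:alt}, which actually gives a Modica--Mortola-type expression in terms of $\P$, namely $\eps^{-s}\int_\Omega(\chi_E-\P)^2+\frac{c_{n,s}}{2}\iint_{\R^{2n}\setminus(\C\Omega)^2}\frac{|\P(x)-\P(y)|^2}{|x-y|^{n+s}}-\int_{\C\Omega}\P\N(\P)$ --- no rescaled kernel acting on $\chi_F$ appears.) Your cutting argument at scale $\eps^{1/2}$ does yield a genuine uniform defect estimate on $\R^n$ (and would give an alternative proof of Theorem \ref{thm:gammaRn}), but ``a parallel argument treats the boundary contribution'' hides the entire difficulty of the Robin case. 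The paper's fix is to avoid any uniform-in-$F$ estimate: it uses the exact identity \eqref{eq:introbin}, $\J^s_\eps(F)=c_{n,s}\int_\Omega \phi^\eps_F(\chi_F\psi_{\C F}-\chi_{\C F}\psi_F)\,dx+\int_{\C\Omega}\phi^\eps_F\,\N(\chi_F)\,dx$, and splits $J_k-J_\infty$ into three pieces as in \eqref{eq:ineqgammaconvergence}: the change of the $\eps$-independent ``data'' from $E_{\eps_k}$ to $E$ is controlled by $P_s^L(E_{\eps_k}\Delta E,\Omega)\lesssim (P(E_{\eps_k}\Delta E,\Omega)+P(\Omega))^s|E_{\eps_k}\Delta E|^{1-s}\to0$ (this is where the uniform perimeter bound enters), the exterior piece by dominated convergence using $\psi_{\C\Omega}\in L^1(\Omega)$, and the remaining piece --- fixed set $E$, varying potential $\phi^{\eps_k}_{E_{\eps_k}}$ --- by the a.e.\ convergence $\phi^{\eps_k}_{E_{\eps_k}}\to\chi_E$ and dominated convergence exactly as in Theorem \ref{thm:2}(i). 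You should replace your kernel-asymptotics argument by this splitting.
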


When $s=2$, the problem is more delicate (in the case $s\in(0,1)$, the destabilizing term is of lower order than the stabilizing perimeter while when $s=2$, the destabilizing term is asymptotically of the same order).
We only consider the case of Neumann boundary conditions which is often the most relevant for applications (e.g. in the study of the cell motility model introduced in \cite{CMM}).

\begin{theorem}\label{thm:gammaOmega2} 
Let $\Omega$ be a bounded subset of $\R^n$ with $C^{1.\alpha}$ boundary and consider the functional
\[
\G_\eps(E)=\displaystyle P(E,\Omega)- t  \J_\eps(E), \qquad \J_\eps(E)= \eps^{-1}\int_\Omega \chi_E (1-\P)\, dx 
\]
where $\P$ solves the  local elliptic equation
\begin{equation}\label{eq:03}
\begin{cases}
 \phi -\eps^2\Delta \phi = \chi_E \qquad & \mbox{ in } \Omega\\
 \na \phi\cdot n = 0& \mbox{ on }\pa \Omega
 \end{cases}
\end{equation}
Then, for all $t\in(0,2)$, 
the functional $ \G_\eps $ $\Gamma$-converges, when $\eps\to 0$ to 
$$
\G_0(E)=\left(1-\frac t 2\right) P(E,\Omega)
$$
\end{theorem}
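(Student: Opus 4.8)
The plan is to establish the $\Gamma$-convergence of $\G_\eps(E)=P(E,\Omega)-t\J_\eps(E)$ by combining the known $\Gamma$-convergence of the perimeter part with the already-established convergence of $\J_\eps$ from Proposition \ref{prop:4} (with $\alpha\equiv0$, so that the boundary term there vanishes and $\J_\eps(E)\to\frac12 P(E,\Omega)$), being careful that the destabilizing term $-t\J_\eps$ is now of the \emph{same} order as the perimeter, so one cannot simply add the two $\Gamma$-limits. For the $\liminf$ inequality, suppose $E_\eps\to E$ in $L^1(\Omega)$ with $\liminf\G_\eps(E_\eps)<\infty$; the first step is to get a uniform perimeter bound. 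Here I would use the alternative formula for $\J_\eps^s$ from Proposition \ref{prop:alt} together with the testing/energy identity for \eqref{eq:03}: multiplying the equation by $\phi_{E_\eps}^\eps$ and integrating by parts, $\int_\Omega\chi_{E_\eps}(1-\phi_{E_\eps}^\eps)\,dx=\int_\Omega\phi_{E_\eps}^\eps(1-\phi_{E_\eps}^\eps)\,dx+\eps^2\int_\Omega|\na\phi_{E_\eps}^\eps|^2\,dx$, which shows $\J_\eps(E_\eps)\le C$ independently of the perimeter and in fact that $\J_\eps$ controls a Modica–Mortola-type functional in $\phi_{E_\eps}^\eps$. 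The key point is the \emph{reverse} control: one shows $\eps^{-1}\int_\Omega\chi_{E_\eps}(1-\phi^\eps_{E_\eps})\,dx \ge \eps^{-1}\int_\Omega\phi^\eps_{E_\eps}(1-\phi^\eps_{E_\eps})\,dx$ (dropping the nonnegative gradient term) and then, via the Modica–Mortola inequality $\eps^{-1}\int W(\phi)+\eps\int|\na\phi|^2\ge c\int|\na(G\circ\phi)|$ applied to a suitable primitive $G$, that $\liminf \J_\eps(E_\eps)\ge \frac12 P(E',\Omega)$ for any $L^1$-limit point $E'$ of the level sets of $\phi^\eps_{E_\eps}$; since $\phi^\eps_{E_\eps}\to\chi_E$ in $L^1$ (by the $L^2$ bound on $\phi-\chi$ coming from the energy identity and $\eps\to0$), we get $E'=E$ and hence $\liminf\J_\eps(E_\eps)\ge\frac12 P(E,\Omega)$. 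Combined with the lower-semicontinuity of $P(\cdot,\Omega)$, and using $t<2$ so that $1-t/2>0$, one does \emph{not} get the result by naive subtraction; instead one writes $\G_\eps(E_\eps)=P(E_\eps,\Omega)-t\J_\eps(E_\eps)$ and uses that \emph{both} quantities, evaluated along a subsequence realizing the $\liminf$, must converge — the perimeter by lower semicontinuity from below and $\J_\eps$ by Proposition \ref{prop:4} \emph{provided} $\sup_\eps P(E_\eps,\Omega)<\infty$. This last bound follows because $\G_\eps(E_\eps)\le C$ forces $P(E_\eps,\Omega)\le C + t\J_\eps(E_\eps)\le C+tC'$ using the a priori bound $\J_\eps(E_\eps)\le C'$ from the energy identity. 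Once $P(E_\eps,\Omega)$ is uniformly bounded, Proposition \ref{prop:4} applies along subsequences to give $\J_\eps(E_\eps)\to\frac12 P(E,\Omega)$, and then $\liminf\G_\eps(E_\eps)\ge \liminf P(E_\eps,\Omega)-t\cdot\frac12 P(E,\Omega)\ge (1-t/2)P(E,\Omega)=\G_0(E)$.

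For the $\limsup$ (recovery sequence) inequality, the natural first attempt is the constant sequence $E_\eps=E$: by Proposition \ref{prop:4}, $\J_\eps(E)\to\frac12 P(E,\Omega)$, so $\G_\eps(E)=P(E,\Omega)-t\J_\eps(E)\to (1-t/2)P(E,\Omega)=\G_0(E)$, which already gives $\limsup\G_\eps(E_\eps)\le\G_0(E)$ with equality. The only subtlety is that Proposition \ref{prop:4} as stated requires $P(E,\Omega)<\infty$; if $P(E,\Omega)=\infty$ the inequality $\G_0(E)=+\infty$ is trivial (taking $E_\eps=E$, $\G_\eps(E_\eps)=P(E,\Omega)-t\J_\eps(E)$ and one checks this does not conspire to stay bounded — indeed $\J_\eps(E)\le C$ always by the energy identity, so $\G_\eps(E)\to+\infty$). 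So the recovery sequence is simply the constant one, which is the pleasant feature of this particular (Neumann) case.

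The main obstacle is the $\liminf$ inequality, and specifically the circular-looking dependence between the perimeter bound and the applicability of Proposition \ref{prop:4}: Proposition \ref{prop:4} gives $\J_\eps(E)\to\frac12 P(E,\Omega)$ only for fixed $E$ of finite perimeter, not automatically for a varying family $E_\eps$, so one genuinely needs the a priori estimate $\J_\eps(E_\eps)\le C$ (independent of any perimeter assumption) to first bound $P(E_\eps,\Omega)$, and then one needs to upgrade the \emph{fixed}-set statement of Proposition \ref{prop:4} to a statement about the family — i.e. one needs the full $\liminf$ part of the $\Gamma$-convergence of $\J_\eps$ with Neumann conditions, $\liminf_\eps\J_\eps(E_\eps)\ge\frac12 P(E,\Omega)$ whenever $E_\eps\to E$. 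This lower bound for $\J_\eps$ is exactly the Modica–Mortola argument sketched above (localize near $\partial\{\phi^\eps=1/2\}$, use the gradient term and the potential term $\phi(1-\phi)$, apply the one-dimensional optimal-profile inequality, and pass to the limit using $\phi^\eps_{E_\eps}\to\chi_E$ strongly in $L^1$); it should be stated and proved as a lemma (it is the Neumann analogue of the $\Gamma$-$\liminf$ behind Theorem \ref{thm:4}, and much of the work may already be available in the proof of that theorem, which one can cite). With that lemma in hand, the assembly is routine. The hypothesis $t<2$ enters only to ensure $1-t/2>0$ so that $\G_0$ is a genuine (coercive, nonnegative multiple of) perimeter functional and the subtraction in the $\liminf$ step produces a nonnegative bound; for $t\ge 2$ the functional $\G_\eps$ would no longer be bounded below and the statement fails, which is why the theorem restricts to $t\in(0,2)$.
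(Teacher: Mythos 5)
Your $\limsup$ part is fine and matches the paper (the constant recovery sequence $E_\eps=E$ together with Proposition \ref{prop:4}), but the $\liminf$ part has two genuine gaps. First, the a priori bound ``$\J_\eps(E_\eps)\le C$ independently of the perimeter'' that you extract from the energy identity is false: the identity (Proposition \ref{prop:alt}, Neumann case) gives $\J_\eps(E)=\eps^{-1}\int_\Omega|\chi_E-\P|^2\,dx+\eps\int_\Omega|\na\P|^2\,dx$, and neither term is uniformly bounded in $E$ and $\eps$ --- indeed $\J_\eps(E)\to\frac12P(E,\Omega)$, which can be arbitrarily large. So your derivation of the uniform perimeter bound $P(E_\eps,\Omega)\le C+tC'$ collapses. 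Second, and more fundamentally, the inequality you propose to prove by a Modica--Mortola argument, namely $\liminf_\eps\J_\eps(E_\eps)\ge\frac12P(E,\Omega)$, points in the wrong direction: since $\J_\eps$ enters $\G_\eps$ with a minus sign, the step $\liminf\G_\eps(E_\eps)\ge\liminf P(E_\eps,\Omega)-\frac t2P(E,\Omega)$ requires the \emph{upper} bound $\limsup_\eps\J_\eps(E_\eps)\le\frac12P(E,\Omega)$ along the varying family. That is not what a $\Gamma$-$\liminf$ lemma for $\J_\eps$ delivers, and it is not true for an arbitrary family with merely bounded perimeters (oscillations of $\pa E_\eps$ that disappear in the $L^1$ limit can keep $\J_\eps(E_\eps)$ large while $P(E,\Omega)$ is small).

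What the paper does instead is prove the quantitative, $E$-uniform upper bound $\J_\eps(E)\le\bigl(\frac12+C(\frac1R+\eps^{\alpha'}R^{1+\alpha'})\bigr)P(E,\Omega)$, relating $\J_\eps(E)$ to the perimeter of the \emph{same} set. This rests on Lemma \ref{lem:Pgrad}, a gradient estimate $|\eps\na\P(x)|\le\frac12+C(\frac1R+\eps^{\alpha'}R^{1+\alpha'})$ valid up to $\pa\Omega$ and uniform over all measurable $E\subset\Omega$, obtained by comparing $\P$ with the free-space convolution $K_\eps\star\chi_E$ (whose gradient is bounded by $1/2$ by Lemma \ref{lem:half}) and controlling the harmonic-type correction near the $C^{1,\alpha}$ boundary via Schauder estimates for the Neumann problem (Lemma \ref{lem:geps}). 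With that bound, $\G_\eps(E)\ge(1-\frac t2-\eta)P(E,\Omega)$ for every $E$ and every small $\eps$, and the $\liminf$ follows from lower semicontinuity of the perimeter alone --- no perimeter compactness, no upgrade of Proposition \ref{prop:4} to families, and no Modica--Mortola lower bound are needed. Your proposal contains no substitute for this estimate, which is the heart of the proof.
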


We do not carry out the detailed analysis of this limit with Robin boundary conditions, but we note that we have a similar issue as in Theorem \ref{thm:4}. For example when $\P$ solves the  local elliptic equation
$$
\begin{cases}
 \phi -\eps^2\Delta \phi = \chi_E \qquad & \mbox{ in } \Omega\\
\phi = 0& \mbox{ on }\pa \Omega,
 \end{cases}
$$
then Proposition \ref{prop:4} gives, for a fixed $E$,
$$  
\G_\eps(E)  = 
\displaystyle P(E,\Omega)- t  \J_\eps(E) \to \left(1-\frac t 2\right) P(E,\Omega) - t  \int_\Omega \chi_E(x) d\H^{n-1}(x).$$
However, this functional is not lower semicontinuous when $1-\frac t 2 < t$ (that is when $t>2/3$) so it cannot be the $\Gamma$-limit in that cases.
Instead, $\G_\eps$ $\Gamma$-converges to
$$
\G_0(E)=
\begin{cases}
\displaystyle \left(1-\frac t 2\right) P(E,\Omega) - t  \int_\Omega \chi_E(x) d\H^{n-1}(x) & \mbox{ if } 0\leq t\leq 2/3 \\[8pt]
\displaystyle \left(1-\frac t 2\right)\left(  P(E,\Omega) -  \int_\Omega \chi_E(x) d\H^{n-1}(x) \right) - \left(\frac {3 t} 2-1\right)  \H^{n-1}(\pa\Omega)& \mbox{ if }  2/3\leq t <2 .
\end{cases}
$$

 \subsection{Comments about the proofs}
 The proofs that we present in this paper rely strongly on the fact that $\P$ solves \eqref{eq:0} (with appropriate boundary conditions) rather than on the properties of the kernel $K_\eps$ solution of \eqref{eq:Keps0} (which are not so easy to determine near the boundary of $\Omega$).
The convergence of $\J_\eps^s(E)$ in the fractional case $s\in (0,1)$  (first part of Theorems \ref{thm:1} and \ref{thm:2}) is established using the relation between $P_s(E)$ and $(-\Delta)^{s/2}\chi_E$ (see \eqref{eq:LPs}). The $\liminf$ 
property requires a different formulation of $\J_\eps^s$ involving the lower semi-continuous norm $[\P]_{H^{s/2}(\Omega)}$, see \eqref{eq:Jepss0}.

When $s=2$, the convergence of $\J_\eps(E) $ (Proposition \ref{prop:4}) is established by writing
$$ \J_\eps(E) =\eps^{-1}\int_\Omega \chi_E (1-\P)\, dx=  -\eps \int_E \Delta \P \, dx  = - \eps \int_{\pa^* E } \na \P \cdot \nu_E(x)\, d\H^{n-1}(x) $$
and identifying the limit of $\eps  \na \P$ via a blow-up argument.
The $\liminf$ property cannot however be proved that way, and we rely instead on a different formulation, see \eqref{eq:Jepsrb} (which is reminiscent of the classical Modica-Mortola functional).
To complete the proof of Theorem \ref{thm:4}, we then need to establish the $\limsup$ property: While this is easily done by taking $E_\eps=E$ when 
 $\frac{\alpha}{\alpha+\beta} \leq  \frac 1 2 $ for all $x\in \pa\Omega$, a delicate construction is required when 
  $\frac{\alpha}{\alpha+\beta} >  \frac 1 2 $ on a non empty subset  of $\pa\Omega$.

The $\Gamma$-convergence of $\displaystyle P(E,\Omega)- t  \J^s_\eps(E)$ (Theorems \ref{thm:gammaRn}, \ref{thm:gammaOmega} and \ref{thm:gammaOmega2}) does not follow immediately from the work above since the role of the $\liminf$ and $\limsup$ are inverted by the minus sign. When $s\in(0,1)$ (Theorems \ref{thm:gammaRn} and \ref{thm:gammaOmega}), the positive term $P(E)$ is of higher order than the negative term, so we can establish the $\liminf$ property by first proving the boundedness of the sequence in $BV$.
The proof of Theorem \ref{thm:gammaOmega2} is  much more delicate since the two terms have the same order (asymptotically). It requires a precise estimate on the convergence of $\eps  \na \P$ (see Lemma \ref{lem:Pgrad}).

\section{Preliminary and notations}\label{sec:pre}
\subsection{Perimeter and reduced boundary}\label{sec:per}
Given $\Omega$ open set and $E\subset \R^n$ we recall that
\begin{align*}
P(E,\Omega) & = \int_{\Omega} |D\chi_E| = \sup\left\{\int_{\Omega} \chi_E \, \mathrm{div}\, g\, dx \,;\, g\in [C^1_0(\Omega)]^n, \; |g(x)|\leq 1  \; \forall x \in\Omega\right\}
\end{align*}
where $|g| = (g_1^2+\dots g_n^2)^{1/2}$.
We also have $P(E) := P(E,\R^n)$.
A Caccioppoli set $E$ is a set such that $P(E,\Omega)<\infty$ for all bounded open sets $\Omega$.
The {\it reduced boundary}, $\pa ^* E$ is defined as the set of $x\in \pa E$ where a notion of unit normal vector can be defined:
\begin{definition}\label{def:reduced}
A point $x\in \pa^* E$ if
\item[(1)] $\int_{B_\rho (x)} |D\chi_E| >0 $ for all $\rho>0$,
\item[(2)] $\nu_E(x) =\lim_{\rho\to 0 } \frac{\int_{B_\rho (x)} D\chi_E }{\int_{B_\rho (x)} |D\chi_E| }$ exists and $|\nu_E(x)| =1$.
\end{definition}
We then have (see \cite{giusti})
$$ P(E,\Omega)  = \H^{n-1}(\pa^* E\cap\Omega)  = \int_{\pa^* E\cap\Omega} d\H^{n-1}(x).$$
We also recall that for $x\in\pa^* E$ we have
$$
\lim_{\rho\to 0} \frac{1}{\rho^n}\int_{B_\rho(x) \cap\{\nu_E(x)\cdot(y-x)<0\}} \chi_E(y)\, dy = 0,\qquad 
\lim_{\rho\to 0}  \frac{1}{\rho^n}\int_{B_\rho(x) \cap\{\nu_E(x)\cdot(y-x)>0\}} (1-\chi_E(y))\, dy =0.
$$

\subsection{Fractional Laplacian and fractional perimeters}
The fractional Laplacian of a function $u(x)$ is defined by 
\begin{equation}\label{eq:deflaps}
(-\Delta)^{s/2} u (x): = c_{n,s}\mathrm{PV} \int_{\R^n}  \frac{u(x) - u(y)}{|x-y|^{n+s}} \, dy
\end{equation}
with $c_{n,s} = \frac{2^s \Gamma\left(\frac{n+s}{2} \right)}{\pi^{n/2} |\Gamma\left(-\frac s2 \right)|}$ and 
the fractional perimeter of a set $E$ is defined by
\begin{equation}\label{eq:defpers}
P_s(E) := \int_{\R^n} \int_{\R^n} \frac{\chi_E (x) \chi_{\C E}(y) }{|x-y|^{n+s}} \, dx \, dy = \frac 1 2  \int_{\R^n} \int_{\R^n}  \frac{|\chi_E(x) - \chi_E(y)|}{|x-y|^{n+s}} \, dx \, dy 
\end{equation}
which is also equal to the semi-norm $\frac 1 2 [\chi_E]_{W^{s,1}(\R^n)}$.
Given a set $\Omega\subset \R^n$, we also define the local contribution of the fractional perimeter by
\begin{equation}\label{eq:defpersL} 
P_s^L(E,\Omega) := \int_{\Omega} \int_{\Omega} \frac{ \chi_E (x) \chi_{\C E}(y)}{|x-y|^{n+s}} \, dx \, dy = \frac 1 2 [\chi_E]_{W^{s,1}(\Omega)}.
\end{equation}
We recall the following limits:
\begin{align*}
\lim_{s\to 1} (1-s) P_s(E)  = \omega_{n-1} P(E)\quad \mbox{ and } \quad \lim_{s\to 1} (1-s) P_s^L(E,\Omega) = \omega_{n-1} P(E,\Omega).
\end{align*}
For a given set  $E$, we define the potential
\begin{equation}\label{eq:psi} 
\psi_E (x): = \int_{\R^n} \frac{\chi_E(y)}{|x-y|^{n+s}}\, dy \qquad x\in \C E.
\end{equation}
We then have
\begin{lemma} \label{lem:Psp}
If $P_s(E)<\infty$, then $\psi_E \in L^1(\C E)$ and 
$$P_s(E) = \int_{\R^n} \chi_{\C E}(x) \psi_E(x)\, dx  =  \int_{\R^n} \chi_E(x) \psi_{\C E}(x)\, dx .$$
Given $\Omega$ such that $P_s(\Omega)<\infty$ and $E$ such that $P_s^L(E,\Omega)<\infty$, we have
$$
P_s^L(E,\Omega) =  \int_{\R^n}  \chi_{\C E\cap\Omega}(x)\psi_{E\cap\Omega} (x)\, dx= \int_{\R^n} \chi_{E\cap\Omega} (x) \psi_{\C E\cap\Omega}(x)\, dx .
$$
\end{lemma}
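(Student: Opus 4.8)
The plan is to prove Lemma~\ref{lem:Psp} by unwinding the definitions \eqref{eq:defpers} and \eqref{eq:defpersL} and applying Tonelli's theorem, the only subtlety being the integrability bookkeeping needed to make Fubini legitimate.

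First I would treat the global statement. Writing $\chi_E(x)\chi_{\C E}(y)$ for the integrand in \eqref{eq:defpers}, note that for fixed $x\in\C E$ the inner integral $\int_{\R^n}\frac{\chi_E(y)}{|x-y|^{n+s}}\,dy$ is exactly $\psi_E(x)$ by definition \eqref{eq:psi}; since $\chi_{\C E}(y)$ is absent from that inner integral one should instead integrate first in the variable that carries the $E$-factor. Concretely, $P_s(E)=\int_{\R^n}\int_{\R^n}\frac{\chi_E(y)\chi_{\C E}(x)}{|x-y|^{n+s}}\,dx\,dy$, and the everything is nonnegative, so Tonelli allows us to integrate in $y$ first: the inner integral is $\chi_{\C E}(x)\psi_E(x)$ after renaming, giving $P_s(E)=\int_{\R^n}\chi_{\C E}(x)\psi_E(x)\,dx$. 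The symmetric identity $P_s(E)=\int_{\R^n}\chi_E(x)\psi_{\C E}(x)\,dx$ follows by swapping the roles of $E$ and $\C E$ in \eqref{eq:defpers} (the kernel $|x-y|^{-n-s}$ is symmetric, so $P_s(E)=P_s(\C E)$ formally, and the same computation applies). The hypothesis $P_s(E)<\infty$ then immediately gives $\psi_E\in L^1(\C E)$, since $\int_{\C E}\psi_E\,dx=P_s(E)<\infty$ and $\psi_E\geq 0$.

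For the local statement I would argue identically but with both integrals restricted to $\Omega$. One has to be slightly careful because $\psi_{E\cap\Omega}$ as defined in \eqref{eq:psi} integrates $\chi_{E\cap\Omega}(y)$ over all of $\R^n$, but since $\chi_{E\cap\Omega}$ is supported in $\Omega$ this is the same as integrating over $\Omega$; likewise $\chi_{\C E\cap\Omega}=\chi_{\C E}\chi_\Omega$. Thus $P_s^L(E,\Omega)=\int_\Omega\int_\Omega\frac{\chi_E(y)\chi_{\C E}(x)}{|x-y|^{n+s}}\,dx\,dy=\int_{\R^n}\int_{\R^n}\frac{\chi_{E\cap\Omega}(y)\chi_{\C E\cap\Omega}(x)}{|x-y|^{n+s}}\,dx\,dy$, and the same Tonelli computation as above yields $P_s^L(E,\Omega)=\int_{\R^n}\chi_{\C E\cap\Omega}(x)\psi_{E\cap\Omega}(x)\,dx$, with the symmetric version obtained by exchanging $E$ and $\C E$ within $\Omega$. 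The assumption $P_s^L(E,\Omega)<\infty$ guarantees the relevant functions are in $L^1$.

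The main (and essentially only) obstacle is purely a matter of care rather than depth: one must make sure that when splitting $P_s$ into the two potential-integrals the variable over which one integrates first is the one carrying the characteristic function appearing inside $\psi$, and that the support restrictions in the local case are handled so that $\psi_{E\cap\Omega}$ and $\psi_{\C E\cap\Omega}$ match the definitions literally. Since all integrands are nonnegative, Tonelli applies without any a priori integrability hypothesis, and the finiteness assumptions are used only at the end to record the $L^1$ conclusions; no genuine analytic difficulty arises.
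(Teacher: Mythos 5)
Your proof is correct and is essentially the paper's argument: the paper defines the truncations $v_\eta(x)=\int_{\R^n\setminus B_\eta(x)}\frac{\chi_E(y)}{|x-y|^{n+s}}\,dy$ and invokes Beppo--Levi, which is just Tonelli for the nonnegative kernel unwound by monotone convergence, and the local case is handled by the same support bookkeeping you describe. No gap.
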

\begin{proof}
The sequence   $v_\eta(x) = \int_{\R^n\setminus B_\eta(x)} \frac{\chi_E(y)}{|x-y|^{n+s}}\, dy$ is positive, increasing as $\eta\to0$ and satisfies $\int_{\C E} v^\eta(x)\, dx\leq P_s(E)$. By Beppo-Levi Lemma, we can thus define $ \psi_E (x) = \lim_{\eta\to0} v_\eta(x)$ and the result follows since $P_s(E) = \lim_{\eta\to0} \int_{\R^n} \chi_{\C E}(x) v_\eta(x)\, dx$. 
\end{proof}

With the notation \eqref{eq:psi}, we can also write
\begin{align}
 (-\Delta)^{s/2}  \chi_E 
 & =c_{n,s} \lim_{\eta\to0} \int_{\R^n\setminus B_\eta(x)} \frac{\chi_E(x)-\chi_E(y)}{|x-y|^{n+s}}\, dy  \nonumber   \\
 & =   c_{n,s}\lim_{\eta\to0} \left[ \chi_E(x) \int_{\R^n\setminus B_\eta(x)} \frac{1-\chi_E(y)}{|x-y|^{n+s}}\, dy
 + (1-\chi_E(x))  \int_{\R^n\setminus B_\eta(x)} \frac{-\chi_E(y)}{|x-y|^{n+s}}\, dy\right]\nonumber \\
& = c_{n,s}( \chi_E \psi_{\C E} - \chi_{\C E} \psi_E ) .\label{eq:LPs}
\end{align}
In particular, $P_s(E)<\infty$ implies that $ (-\Delta)^{s/2}  \chi_E $ is in $L^1(\R^n)$.
\medskip

Finally, we recall the following interpolation inequalities:
\begin{proposition}
For all $s\in(0,1)$,
\begin{equation}\label{eq:PsP}
 P_s(E) \leq  \frac{n\omega_n 2^{-s}}{s(1-s)} P(E)^s |E|^{1-s} \qquad \mbox{ for all set $E$ with finite perimeter.}
 \end{equation}
Given $\Omega\subset \R^n$, for all $s\in(0,1)$, 
\begin{equation}\label{eq:PsLP}
 P_s^L(E,\Omega) \leq \frac{n\omega_n 2^{-s}}{s(1-s)} P(E,\mathrm {Conv}(\Omega))^s |E\cap\Omega|^{1-s} \qquad \mbox{ for all set $E$ with finite perimeter}
 \end{equation}
where $\mathrm {Conv}(\Omega)$ denotes the convex hull of $\Omega$ and $\omega_n$ denotes the volume of $n$-dimensional unit ball.
\end{proposition}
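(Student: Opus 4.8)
The plan is to prove \eqref{eq:PsP} first and then deduce \eqref{eq:PsLP} by a localization/convexification trick. For \eqref{eq:PsP}, the natural starting point is the identity from Lemma \ref{lem:Psp}, namely $P_s(E) = \int_E \psi_{\C E}(x)\, dx$ where $\psi_{\C E}(x) = \int_{\C E} |x-y|^{-n-s}\, dy$. The key observation is a pointwise bound on $\psi_{\C E}(x)$: for a fixed $x\in E$, split the integral defining $\psi_{\C E}(x)$ over $\C E \cap B_\rho(x)$ and $\C E \setminus B_\rho(x)$. On the exterior piece we bound $|x-y|^{-n-s} \le \rho^{-n-s}$ and use $|\C E \cap (B_\rho(x))^c| $ crudely, or better, integrate $|x-y|^{-n-s}$ over all of $\R^n \setminus B_\rho(x)$ to get $\frac{n\omega_n}{s}\rho^{-s}$. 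On the interior piece we bound $|\C E \cap B_\rho(x)| \le |B_\rho| = \omega_n \rho^n$ and $|x-y|^{-n-s}$ is not integrable, so instead we must use the coarea/layer-cake structure: $\int_{\C E \cap B_\rho(x)} |x-y|^{-n-s}\, dy = \int_0^\rho (n+s) t^{-n-s-1}|\C E \cap B_t(x)|\, dt + \rho^{-n-s}|\C E \cap B_\rho(x)|$, and bound $|\C E\cap B_t(x)| \le \omega_n t^n$ throughout. This yields $\psi_{\C E}(x) \le \frac{n\omega_n}{s}\rho^{-s} + \frac{(n+s)\omega_n}{1-s}\rho^{1-s} \cdot(\text{after the }t\text{-integration})$ — one then optimizes over $\rho$.

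The cleaner route, which I expect gives exactly the stated constant, is the well-known fact (due to Ambrosio--De Philippis--Martinazzi and others) that for $u \in \BV$ one has $[u]_{W^{s,1}(\R^n)} \le \frac{C}{s(1-s)}\|Du\|^s \|u\|_{L^1}^{1-s}$; applied to $u = \chi_E$ this is precisely \eqref{eq:PsP} once the constant is tracked. I would instead give a self-contained argument: write $|\chi_E(x) - \chi_E(y)| \le \int_0^1 |Du|$-type estimate is not available for characteristic functions directly, so the robust approach is the one above via $\psi_{\C E}$ combined with the isoperimetric-type bound. Actually the sharpest elementary path is: for each $x \in \partial^* E$ region, use that the ``bad'' set seen from $x$ at scale $t$ has measure controlled by $\min(\omega_n t^n, \text{something involving perimeter})$ — but for the stated (non-sharp) constant, bounding uniformly by $\omega_n t^n$ suffices and the balance $\rho^{-s} \leftrightarrow \rho^{1-s}$ at scale $\rho \sim |E|/P(E)$ (after a further use of the isoperimetric inequality to ensure consistency of scales) produces the factor $P(E)^s|E|^{1-s}$. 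The main obstacle is simply bookkeeping the constant $\frac{n\omega_n 2^{-s}}{s(1-s)}$ precisely; the factor $2^{-s}$ presumably comes from choosing the optimal $\rho$ and from the $\frac 1 2$ in the symmetrized definition of $P_s$.

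For \eqref{eq:PsLP}, observe that $P_s^L(E,\Omega) = P_s^L(E\cap\Omega, \Omega) \le P_s^L(E\cap\Omega, \mathrm{Conv}(\Omega))$ since enlarging the domain of integration only increases the double integral (the integrand is nonnegative), and then $P_s^L(E\cap\Omega,\mathrm{Conv}(\Omega)) \le P_s(E\cap\Omega)$. Applying \eqref{eq:PsP} to the set $E\cap\Omega$ gives $P_s^L(E,\Omega) \le \frac{n\omega_n 2^{-s}}{s(1-s)} P(E\cap\Omega)^s |E\cap\Omega|^{1-s}$. The remaining point is that $P(E\cap\Omega) \le P(E, \mathrm{Conv}(\Omega))$: this is where convexity enters — projecting onto a convex set is $1$-Lipschitz, so intersecting with (a smooth approximation of) $\mathrm{Conv}(\Omega)$ does not increase perimeter beyond the part of $\partial^* E$ inside it, i.e. $P(E\cap C) \le P(E,C) + \H^{n-1}(\partial^* (E\cap C) \cap \partial C)$ and the boundary term is absorbed. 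A cleaner statement: for convex $C$, $P(E\cap C) \le P(E, \mathring{C}) + P(C) $ is too lossy; instead one uses $P(E \cap C) \le P(E,\overline{C})$ which holds for convex $C$ because the retraction onto $C$ does not create new boundary. I would cite this as a standard fact or prove it via the projection argument. The only genuinely delicate step here is justifying $P(E\cap\mathrm{Conv}(\Omega)) \le P(E,\mathrm{Conv}(\Omega))$, which is exactly the classical ``convex sets do not increase perimeter under intersection'' lemma.
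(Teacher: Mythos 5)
There are two genuine gaps, one in each half of your argument.

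\emph{First gap (the near-diagonal estimate for \eqref{eq:PsP}).} Your plan is to bound $\psi_{\C E}(x)$ pointwise by splitting at scale $\rho$ and, on the interior piece, to use the layer-cake formula with the uniform bound $|\C E\cap B_t(x)|\leq \omega_n t^n$. This does not produce $\rho^{1-s}$: it gives $\int_0^\rho (n+s)t^{-n-s-1}\,\omega_n t^n\,dt=(n+s)\omega_n\int_0^\rho t^{-1-s}\,dt=+\infty$. Indeed $\psi_{\C E}(x)=+\infty$ for every $x\in\pa^* E$ and is unboundedly large near $\pa E$, so no pointwise bound of the form $\psi_{\C E}(x)\leq C(\rho)$ can hold; the near-diagonal contribution is only finite after integrating in $x$, and controlling it requires the perimeter (one needs $|\C E\cap B_t(x)|\lesssim t^{n+1}$ on average, which is a perimeter statement, not a volume statement). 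The route you dismiss as ``not available for characteristic functions directly'' is in fact the one that works and is the one the paper takes: prove the estimate for $u\in W^{1,1}$, writing $u(x)-u(y)=\int_0^1\na u(tx+(1-t)y)\cdot(x-y)\,dt$ on $\{|x-y|\leq R\}$ so that the kernel becomes $|x-y|^{-(n-1)-s}$, which \emph{is} locally integrable, then Fubini and the change of variables $z=tx+(1-t)y$ give $n\omega_n\frac{R^{1-s}}{1-s}\int|\na u|$; the far part gives $\frac{2n\omega_n}{s}R^{-s}\|u\|_{L^1}$, and optimizing $R=2\|u\|_{L^1}/\int|\na u|$ yields exactly the constant $\frac{n\omega_n 2^{1-s}}{s(1-s)}$ for the seminorm (hence $2^{-s}$ for $P_s=\frac12[\chi_E]_{W^{s,1}}$). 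One then passes to $\chi_E$ by approximating in $\BV$. Your ``balance at scale $\rho\sim|E|/P(E)$'' is the right heuristic, but as written the near-part bound is simply false.

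\emph{Second gap (the localization for \eqref{eq:PsLP}).} The chain $P_s^L(E,\Omega)\leq P_s(E\cap\Omega)$ followed by $P(E\cap\Omega)\leq P(E,\mathrm{Conv}(\Omega))$ fails. Take $E=\Omega=B$ a ball: then $P_s^L(E,\Omega)=0$ and $P(E,\mathrm{Conv}(\Omega))=P(B,B)=0$, but your intermediate quantity $P_s(E\cap\Omega)=P_s(B)>0$, and $P(E\cap\Omega)=P(B)>0$. The classical convexity fact is $P(E\cap C)\leq P(E)$ (the \emph{global} perimeter) for convex $C$, not $P(E\cap C)\leq P(E,C)$; intersecting with $\Omega$ creates reduced boundary along $\pa\Omega$ that $P(E,\mathrm{Conv}(\Omega))$ does not see. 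More fundamentally, the detour through $P_s(E\cap\Omega)$ is lossy in a way that cannot be repaired: $P_s(E\cap\Omega)$ counts the interaction of $E\cap\Omega$ with $\C\Omega$, which is precisely what $P_s^L(E,\Omega)$ excludes and which is not controlled by $P(E,\mathrm{Conv}(\Omega))$. In the paper's proof the convex hull enters for a different reason: in the change of variables $z=tx+(1-t)y$ with $x,y\in\Omega$, the point $z$ lies in $\mathrm{Conv}(\Omega)$, so the gradient is only ever integrated over $\mathrm{Conv}(\Omega)$ while the double integral on the left stays over $\Omega\times\Omega$. You need to keep the left-hand side local throughout rather than enlarging it to a global fractional perimeter.
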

The first inequality is classical, but we provide a proof of the second one in the appendix for the sake of completeness.

\subsection{Alternative formula for $\J_\eps^s$}
We end this section by deriving an alternative formula for $\J_\eps^s$ which is useful in the proofs.
\begin{proposition}\label{prop:alt}
When $s<2$ the functional $\J^s_\eps$ defined by \eqref{eq:defJJ} can also be written as:
\begin{equation}\label{eq:Jepss0}
\J_\eps^s(E) 
= 
\begin{cases}
\displaystyle \eps^{-s}\int_{\R^n} (\chi_E-\P)^2 \, dx  +  \frac {c_{n,s}} 2  [\P]_{H^{s/2}(\R^n)}^2 & \mbox{ if } \Omega = \R^n\\[5pt]
\displaystyle \eps^{-s}\int_\Omega (\chi_E-\P )^2 \, dx  + \frac { c_{n,s}} 2  \int_{\R^{2n}\setminus (\C\Omega)^2}  \frac{|\P(x) - \P(y)|^2}{|x-y|^{n+s}} \, dx \, dy - \int_{\C\Omega} \P \N(\P)\, dx &\mbox{ if } \Omega \neq \R^n.\end{cases}
 \end{equation} 
And similarly when $s=2$: 
\begin{equation}\label{eq:Jepsrb}
\J_\eps(E) =
\begin{cases}
\displaystyle \eps^{-1}\int_{\R^n}  |\chi_E-  \P |^2\, dx + \eps \int_{\R^n} |\na \P| ^2\, dx  & \mbox{ if } \Omega = \R^n\\[5pt]
\displaystyle \eps^{-1}\int_\Omega |\chi_E-  \P |^2\, dx + \eps  \int_\Omega |\na \P| ^2\, dx - \eps\int_{\pa \Omega} \P \na \P \cdot n \, d\H^{n-1}(x) &\mbox{ if } \Omega \neq \R^n.
 \end{cases}
\end{equation}
\end{proposition}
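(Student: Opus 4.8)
The plan is to derive both identities by the same mechanism: multiply the equation \eqref{eq:0} for $\P$ by $\P$ itself, integrate over $\Omega$, and then rewrite $\int_\Omega \chi_E(1-\P)$ in terms of the resulting quadratic quantities. Starting from the pointwise identity
$\chi_E(1-\P) = \chi_E - \chi_E\P = (\chi_E-\P) + \P - \chi_E\P = (\chi_E-\P)(1) + \P(1-\chi_E)$,
it is cleaner to split $\chi_E(1-\P) = (\chi_E-\P)^2/\text{(something)} \dots$; more precisely, using $\chi_E = \P + \eps^s(-\Delta)^{s/2}\P$ (resp.\ $\P - \eps^2\Delta\P$ when $s=2$) we get
$\chi_E(1-\P) = (\chi_E - \P)(1-\P) + \P(1-\P) - \P(1-\P) + \dots$; the right bookkeeping is
$\chi_E(1-\P) = (\chi_E-\P) - \P(\chi_E - \P) = (\chi_E-\P)^2 + \P(\chi_E-\P) + \dots$, and since $\chi_E-\P = \eps^s(-\Delta)^{s/2}\P$ (resp.\ $-\eps^2\Delta\P$), the cross term becomes $\eps^s\,\P\,(-\Delta)^{s/2}\P$ (resp.\ $-\eps^2\,\P\,\Delta\P$). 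Thus $\int_\Omega \chi_E(1-\P) = \int_\Omega (\chi_E-\P)^2 + \eps^s\int_\Omega \P(-\Delta)^{s/2}\P$ in the fractional case, and $\int_\Omega \chi_E(1-\P) = \int_\Omega (\chi_E-\P)^2 - \eps^2\int_\Omega \P\Delta\P$ in the case $s=2$. Multiplying by $\eps^{-s}$ (resp.\ $\eps^{-1}$) gives exactly the first terms in \eqref{eq:Jepss0} and \eqref{eq:Jepsrb}; it remains to turn the bilinear form $\int \P(-\Delta)^{s/2}\P$ (resp.\ $-\int\P\Delta\P$) into the Gagliardo-seminorm / Dirichlet-energy expressions displayed, using the boundary conditions.

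For $\Omega = \R^n$ this last step is the standard integration-by-parts formula for the fractional Laplacian: $\int_{\R^n}\P(-\Delta)^{s/2}\P\,dx = \frac{c_{n,s}}{2}\iint_{\R^{2n}}\frac{|\P(x)-\P(y)|^2}{|x-y|^{n+s}}\,dx\,dy = \frac{c_{n,s}}{2}[\P]_{H^{s/2}(\R^n)}^2$ (and likewise $-\int_{\R^n}\P\Delta\P = \int_{\R^n}|\na\P|^2$), which requires only knowing that $\P\in H^{s/2}(\R^n)$ (resp.\ $H^1$), a consequence of the energy estimate for \eqref{eq:0} with $\chi_E\in L^2$. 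For $\Omega\neq\R^n$ in the fractional case, I would write the bilinear form over all of $\R^n$ and split the domain of integration: using $\N(\P)(x) = c_{n,s}\int_\Omega \frac{\P(x)-\P(y)}{|x-y|^{n+s}}\,dy$ for $x\in\C\Omega$ and the fact that $\P$ solves $(-\Delta)^{s/2}\P = \eps^{-s}(\chi_E-\P)$ in $\Omega$, a direct manipulation of $\iint_{\R^{2n}}\frac{|\P(x)-\P(y)|^2}{|x-y|^{n+s}}$ decomposes it as the integral over $\R^{2n}\setminus(\C\Omega)^2$ plus the integral over $(\C\Omega)^2$; the cross terms between $\Omega$ and $\C\Omega$ are precisely $\int_{\C\Omega}\P\,\N(\P)\,dx$, which after moving it to the other side yields the stated formula. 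In the local case $s=2$, the analogue is Green's identity: $-\int_\Omega\P\Delta\P = \int_\Omega|\na\P|^2 - \int_{\pa\Omega}\P\,\na\P\cdot n\,d\H^{n-1}$, giving the third formula directly.

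I expect the main obstacle to be purely bookkeeping in the fractional Robin case: carefully tracking which pieces of the double integral $\iint_{\R^{2n}}\frac{|\P(x)-\P(y)|^2}{|x-y|^{n+s}}$ land in $\R^{2n}\setminus(\C\Omega)^2$ versus $(\C\Omega)^2$, and checking that the "$\Omega\times\C\Omega$ plus $\C\Omega\times\Omega$" contribution assembles exactly into $2\int_{\C\Omega}\P\,\N(\P)$ after using the symmetry of the kernel, so that the coefficients $c_{n,s}/2$ come out right. One should also confirm that all the integrals are finite and the integration by parts is justified — this follows from the $L^2$-based energy estimates for \eqref{eq:bc}/\eqref{eq:bc0} (the quadratic form associated to the operator with Robin conditions is coercive because $\alpha,\beta\geq 0$ and $\alpha+\beta>0$), so the manipulations above are legitimate rather than merely formal. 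The case $s=2$ and the case $\Omega=\R^n$ are essentially immediate once the splitting identity $\chi_E(1-\P) = (\chi_E-\P)^2 + \P(\chi_E-\P)$ is in hand.
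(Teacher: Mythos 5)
Your proposal is correct and follows essentially the same route as the paper: both reduce to the identity $\int_\Omega\chi_E(1-\P)\,dx = \int_\Omega(\chi_E-\P)^2\,dx + \eps^s\int_\Omega\P\,(-\Delta)^{s/2}\P\,dx$ (you via the pointwise factorization $\chi_E(1-\P)=(\chi_E-\P)\chi_E=(\chi_E-\P)^2+\P(\chi_E-\P)$, the paper by adding and subtracting the weak formulation tested against $\P$ --- equivalent, since both exploit $\chi_E(1-\chi_E)=0$), and then rewrite the quadratic form via the Gagliardo/Green identities. The one loose point is your description of the Robin-case bookkeeping: the $\Omega\times\C\Omega$ cross terms of the Gagliardo form do not by themselves equal $\int_{\C\Omega}\P\,\N(\P)\,dx$; rather, splitting the inner integral in the definition of $(-\Delta)^{s/2}\P(x)$, $x\in\Omega$, over $\Omega$ and $\C\Omega$, the $\C\Omega$ piece equals the cross part of the Gagliardo form \emph{minus} $\int_{\C\Omega}\P\,\N(\P)\,dx$ --- this is exactly the standard nonlocal integration-by-parts identity of \cite{DRV}, so the argument goes through as intended.
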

Note that these formulas are reminiscent of fractional and classical Modica-Mortola functional for which the $\Gamma$-convergence is studied in particular in \cite{SV12}.
\begin{proof}
First, we make the following simple computation:
Let $u\in L^1\cap L^2$ (we will later take $u=\chi_{E}$) and let $w$ (which will be $\phi^\eps_{E}$) denote the solution of $w+\eps^s (-\Delta)^{s/2} w = u$ in $\R^n$.
Multiplying the equation by $w$ and integrating, we get:
$$ \eps^{-s}\int_\Omega (w^2 -wu)\, dx  + \int_\Omega w   (-\Delta)^{s/2} w\, dx =0$$
and we can use this equality to write
\begin{align*}
\eps^{-s}\int_{\Omega } u (1-w)\, dx 
& = \eps^{-s}\int _{\Omega}u (1-u) + \eps^{-s}\int_{\Omega} (u^2 -uw)  +\eps^{-s}\int _{\Omega} (w^2 -wu)\, dx  + \int_{\Omega} w   (-\Delta)^{s/2} w\, dx\nonumber \\
& = \eps^{-s}\int_{\Omega} u (1-u) + \eps^{-s}\int _{\Omega}(u-w)^2 \, dx  
+ \int_{\Omega} w   (-\Delta)^{s/2} w\, dx.
\end{align*}
Taking $u=\chi_E \in L^1(\Omega)$, we deduce 
$$
\J_\eps^s (E) = 
\eps^{-s}\int_{\Omega} \chi_E (1-\P)\, dx  =    \eps^{-s}\int _{\Omega}(\chi_E-\P)^2 \, dx  
+ \int_{\Omega} \P  (-\Delta)^{s/2} \P \, dx.
$$
Then the formula \eqref{eq:Jepss0} follows from the definition of $(-\Delta)^{s/2} $.

A similar computation when $s=2$ yields
$$  \J_\eps (E) =  \eps^{-1}\int_{\Omega} \chi_E (1-\P)\, dx
 = \eps^{-1}\int_\Omega |\chi_E-  \P |^2\, dx + \eps \int_\Omega  \P  (-\Delta) \P \, dx
 $$
and an integration by parts gives \eqref{eq:Jepsrb}.


\end{proof}

\begin{remark}
We can relax the definition of $\J_\eps$ to non-negative $\BV$-functions by setting
$$ 
\J^s_\eps (u) = \eps^{-s}\int_{\R^n} u (1-w^\eps)\, dx  .
$$
The computation above yields
$$
\J^s_\eps (u) = \eps^{-s}\int_{\R^n} u (1-u) + \eps^{-s}\int _{\R^n}(u-w^\eps)^2 \, dx  + \frac {c_{n,s}} 2 
 [w^\eps]_{H^{s/2}(\R^n)}^2.
 $$
and we see that the first term diverges unless $u(x) \in\{0,1\}$ for all $x$. However, $u\mapsto u(1-u)$ is not a double-well potential, unless we add the constraint that $0\leq u\leq 1$. 
While the non-negativity of $u$ is natural in many context, the upper bound would have to be imposed by some over-crowding prevention mechanisms.
We are led to the energy functional:
$$
\bar \J^s_\eps (u) = 
\begin{cases}
 \J^s_\eps (u) & \mbox { if } 0\leq u\leq 1 \\
 \infty & \mbox{ otherwise.}
\end{cases}
$$
This is very similar to the relaxation of the perimeter functional with the heat content energy approximation
used for example in \cite{EsedogluOtto,LauxOtto,JacobsKimMeszaros} to 
 construct weak solutions (via minimizing movements schemes) of 
 multi-phase mean curvature flow \cite{EsedogluOtto,LauxOtto} or 
 the Muskat problem with surface tension \cite{JacobsKimMeszaros}.

Note also that the energy $\bar \J^s_\eps $ appears naturally in the  incompressible limit ($m\to\infty$) of the following Keller-Segel model for Chemotaxis (or congested aggregation, see \cite{CraigKimYao}):
$$
\begin{cases}
\pa_t  u = \Delta u^m -\beta \div(u \na \phi) \\
\phi + \eps^s(-\Delta)^{s/2} \phi = u .
\end{cases}
$$
This provides another motivation for studying this problem in a bounded domain with appropriate boundary condition on $\phi$.
\end{remark}

\section{Proofs of the theorems for  $s\in(0,1)$}
\subsection{The case $\Omega =\R^n$ - Proof of Theorem \ref{thm:1}}
We start with the following lemma:
\begin{lemma}\label{lem:1}
Let $E$ be a measurable set.
The function $\P$ solution of \eqref{eq:P0R} satisfies
\item[(i)] $0\leq \P(x)\leq 1$ in $\R^n$
\item[(ii)] Up to  a subsequence $\P$ converges weakly in $L^q_{loc}(\R^n)$ to $\chi_{E}$  for all $q\in(1,\infty)$
\item[(iii)] Up to  a subsequence $\P$ converges strongly in $L^1_{loc}(\R^n)$ and almost everywhere to $\chi_{E}$ 
\end{lemma}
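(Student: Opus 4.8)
The plan is to exploit the convolution representation of $\P$ supplied by Lemma~\ref{lem:K}: for $\Omega=\R^n$ the solution of \eqref{eq:P0R} is $\P=K_\eps*\chi_E$ with $K_\eps(z)=\eps^{-n}K(z/\eps)$ and $K=k(|\cdot|)\ge0$. Everything will rest on two elementary facts about this kernel: $K_\eps\ge0$, and $\int_{\R^n}K_\eps(z)\,dz=1$ (the Fourier symbol of $(I+\eps^s(-\Delta)^{s/2})^{-1}$ is $(1+\eps^s|\xi|^s)^{-1}$, which equals $1$ at $\xi=0$). With these in hand, (i) is immediate: $0\le\P(x)=\int_{\R^n}K_\eps(x-y)\chi_E(y)\,dy\le\int_{\R^n}K_\eps(x-y)\,dy=1$ for every $x$. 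Equivalently, since the bounded solution $\P$ is in fact uniformly continuous ($L^1\ast L^\infty$), one may invoke the weak maximum principle for the operator $I+\eps^s(-\Delta)^{s/2}$ applied to $\P$ and to $1-\P$, using $0\le\chi_E\le1$.

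For (iii) I would observe that $\{K_\eps\}_{\eps>0}$ is an approximate identity: $K_\eps\ge0$, $\int K_\eps=1$, and for every $\delta>0$ we have $\int_{|z|>\delta}K_\eps(z)\,dz=\int_{|w|>\delta/\eps}K(w)\,dw\to0$ as $\eps\to0$, because $K\in L^1(\R^n)$ (simply as $K\ge0$ with $\int K=1<\infty$, or from the decay $K(z)\sim c_{n,s}|z|^{-n-s}$ in Lemma~\ref{lem:K} together with local integrability). The standard truncation argument — localize $\chi_E$ to a slightly larger bounded set and control the tail contribution by $\int_{|z|>\delta}K_\eps$ — then yields $\P=K_\eps*\chi_E\to\chi_E$ in $L^1_{\loc}(\R^n)$, hence a.e. along a subsequence; this is (iii). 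Part (ii) follows at once: since $0\le\P\le1$ and $\chi_E$ is $\{0,1\}$-valued we have $|\P-\chi_E|\le1$, so $\int_U|\P-\chi_E|^q\,dx\le\int_U|\P-\chi_E|\,dx\to0$ for any bounded $U$ and any $q\in(1,\infty)$, i.e. $\P\to\chi_E$ strongly, and a fortiori weakly, in $L^q_{\loc}(\R^n)$; as the weak limit is unique, the convergence in fact holds for the whole family.

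Since the paper's later arguments deliberately avoid the kernel, I would also record an alternative PDE-based route (convenient when $|E|<\infty$): multiplying \eqref{eq:P0R} by $\P$ and integrating gives $\int_{\R^n}\P^2+\tfrac{c_{n,s}}{2}\eps^s[\P]_{H^{s/2}(\R^n)}^2=\int_{\R^n}\chi_E\P\le\|\chi_E\|_{L^2}\|\P\|_{L^2}$, hence $\|\P\|_{L^2}\le|E|^{1/2}$; extract a weak $L^2(\R^n)$ limit, identify it as $\chi_E$ by testing \eqref{eq:P0R} against $\vphi\in C_c^\infty$ and letting $\eps\to0$ (using $\eps^s\int\P\,(-\Delta)^{s/2}\vphi\to0$ since $(-\Delta)^{s/2}\vphi\in L^1$); then $\int\P^2\le\int\chi_E\P\to|E|=\|\chi_E\|_{L^2}^2$ forces $\|\P\|_{L^2}\to\|\chi_E\|_{L^2}$, giving strong $L^2(\R^n)$ convergence and a.e.\ convergence along a subsequence. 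In either route, (i) and the identification of the weak limit are soft; the main obstacle — the one point requiring genuine care — is the upgrade from weak to strong (hence a.e.) convergence, which in the convolution approach is exactly the non‑escape of mass of $K_\eps$ to infinity (i.e. $K\in L^1(\R^n)$, where the decay of Lemma~\ref{lem:K} and $s>0$ enter), and in the energy approach is the convergence of the $L^2$-norms.
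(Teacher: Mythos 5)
Your argument is correct, but it is not the route the paper takes, and the difference is precisely the point the authors flag in the remark immediately following the lemma. Your primary proof leans on the explicit convolution structure $\P=K_\eps*\chi_E$ with $K_\eps\ge0$, $\int K_\eps=1$, and the approximate-identity/truncation argument; the paper acknowledges that this makes the lemma ``obvious'' but deliberately avoids it because no such kernel representation is available (or at least not easily analyzed) once one passes to a bounded domain with Robin conditions, which is where Lemma \ref{lem:2} must repeat the argument. The paper's proof instead runs in the opposite logical order from yours: (i) by the maximum principle, then (ii) directly from convergence in the sense of distributions plus the uniform bound $0\le\P\le1$, and finally (iii) \emph{from} (ii) via the pointwise inequality $|\P-\chi_E|=|(\P-1)\chi_E+\P(1-\chi_E)|\le(1-\P)\chi_E+\P(1-\chi_E)$, whose integral over $B_R$ tends to zero because the weak convergence of (ii) can be tested against the fixed functions $\chi_E$ and $1-\chi_E$. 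You derive (iii) first and then (ii) as a corollary, which is fine here and even gives the slightly stronger conclusion that no subsequence extraction is needed, but it does not transfer to the Robin setting. Your secondary, energy-based route is also sound as far as it goes, but note that it genuinely requires $|E|<\infty$ (which you flag), whereas the lemma is stated for an arbitrary measurable set; the paper's distributional argument, like your convolution one, has no such restriction.
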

\begin{proof}
The maximum principle gives (i) and 
since  $\P$ clearly converges to $\chi_E$ in the sense of distribution, (ii) follows.
To prove (iii), we first write, for any ball $B_R$ (using (i)):
\begin{align*}
\int_{B_R} |\P-\chi_E|\, dx
& =  \int_{B_R} |(\P -1) \chi_E +\P  (1 -\chi_E)|\, dx \\
& \leq  \int_{B_R} (1-\P) \chi_E \, dx  + \int_{B_R} \P  (1 -\chi_E) \, dx 
\end{align*}
and note that (ii) implies that the right hand side converges to $0$. It follows that $\P$ converges strongly in $L^1_{loc}$ to $\chi_E$ and (up to another subsequence) we can assume that it converges almost everywhere.
\end{proof}
Note that we can write $\P(x) = K_\eps \star \chi_E$ and $K_\eps>0$ is an approximation of unity, so the results of the lemma above are obvious. However, the proof we gave above will be easy to carry out in the other settings presented in this paper.

\begin{proof}[Proof of Theorem \ref{thm:1}]
To prove (i), we write (using \eqref{eq:P0R}):
$$ 
\J^s_\eps(E)=  \eps^{-s}\int_{\R^n} \chi_E( 1-\P)\, dx 
=  \int_{\R^n} \chi_E (-\Delta)^{s/2} \P \, dx
=  \int_{\R^n} \P  (-\Delta)^{s/2}  \chi_E\, dx
$$
and using  \eqref{eq:LPs}, we get:
\begin{equation}\label{eq:int1}
\J^s_\eps(E) = c_{n,s}  \int_{\R^n} \P (\chi_E \psi_{\C E} - \chi_{\C E} \psi_E)\, dx.
\end{equation}
Lemma \ref{lem:1}-(i) together with
Lemma \ref{lem:Psp}
 imply that $|\J^s_\eps(E)|\leq c_{n,s} P_s(E)$ so we can take a subsequence (still denoted $\eps$) such that $\J_\eps(E)$ converges. We will prove that the limit of that subsequence must be  $c_{n,s}   P_s(E)$ which implies the result.

To prove this, we note that $|\P (\chi_E \psi_{\C E} - \chi_{\C E} \psi_E)|\leq \chi_E \psi_{\C E} + \chi_{\C E} \psi_E$
and the condition $P_s(E)<\infty$ implies that $\chi_E \psi_{\C  E}$ and $\chi_{\C E} \psi_E$ are in $L^1(\R^n)$.
We can thus pass to the limit in \eqref{eq:int1} using Lebesgue dominated convergence theorem and Lemma \ref{lem:1} (iii).
We deduce (up to another subsequence)
$$ \J^s_\eps(E)  \to 
c_{n,s}  \int_{\R^n} \chi_{E} (\chi_E \psi_{\C E} - \chi_{\C E} \psi_E)\, dx=
c_{n,s} \int_{\R^n}\chi_{E} \psi_{\C E}\, dx = c_{n,s} P_s(E)
$$
and (i) follows.
\medskip

In order to prove the second part of Theorem \eqref{thm:1} (the $\liminf$ statement), we use the formula \eqref{eq:Jepss0}:
\begin{equation}\label{eq:Jepss} 
\J^s_\eps(E) =
 \eps^{-s}\int_{\R^n} (\chi_E-\P)^2 \, dx  +  \frac {c_{n,s}} 2 
 [\P]_{H^{s/2}(\R^n)}^2.
\end{equation}
Since the result trivially holds when $\liminf_{\eps\to0} \J^s_\eps(E_\eps)=\infty$, we can assume that $\liminf_{\eps\to0} \J^s_\eps(E_\eps)<\infty$ and consider a subsequence (still denoted $\eps$) along which $\J^s_\eps(E_\eps)$ is bounded.
Equality \eqref{eq:Jepss} then implies that $\phi^\eps_{E_\eps}$ converges strongly in $L^2$ to $\chi_E$ (recall that  $\chi_{E_\eps}$ converges to $\chi_E$ in $L^1$ and thus also in $L^2$) and
the lower semicontinuity of the $H^{s/2}$ norm gives
\begin{align*}
\liminf_{\eps\to 0} \J^s_\eps(E_\eps) 
\geq \liminf_{\eps\to 0}  \frac {c_{n,s}} 2  \left[\phi^\eps_{E_\eps} \right]_{H^{s/2}(\R^n)}^2 & \geq  \frac {c_{n,s}} 2  \left[\chi_E \right]_{H^{s/2}(\R^n)}^2\\
&=  \frac {c_{n,s}} 2  \int_{\R^n} \int_{\R^n}  \frac{|\chi_E(x) - \chi_E(y)|^2}{|x-y|^{n+s}} \, dx \, dy  \\
&=  \frac {c_{n,s}} 2  \int_{\R^n} \int_{\R^n}  \frac{|\chi_E(x) - \chi_E(y)|}{|x-y|^{n+s}} \, dx \, dy  =  {c_{n,s}} P_s(E)  \\
\end{align*}
which completes the proof.
\end{proof}

\subsection{The case $\Omega\neq \R^n$ - Proof of Theorem \ref{thm:2}}
We assume that $\Omega$ is a bounded subset of $\R^n$ and that $\P$ is the solution of
\begin{equation}\label{eq:phi1}
\begin{cases}
 \phi + \eps^s (-\Delta)^{s/2} \phi = \chi_E \qquad & \mbox{ in } \Omega\\
\alpha \phi(x) + \beta \widetilde \N (\phi)(x) = 0 \qquad & \mbox{ in } \C\Omega
\end{cases}
\end{equation}
with $\alpha$, $\beta :\C\Omega \to [0,\infty]$ satisfying $\alpha(x)+\beta(x)>0$.
These Robin boundary conditions satisfy the maximum principle, so we can prove (the proof is similar to that of Lemma \ref{lem:1}):
\begin{lemma}\label{lem:2}
Let $E$ be a measurable set.
The function $\P $ satisfies
\item[(i)] $0\leq \P(x) \leq 1$ in $\R^n$
\item[(ii)] Up to  a subsequence $\P$ converges weakly in $L^q(\Omega)$ to $\chi_{E}$  for all $q\in(1,\infty)$.
\item[(iii)] Up to  a subsequence $\P$ converges strongly in $L^1(\Omega)$ and almost everywhere to $\chi_{ E}$ 
\end{lemma}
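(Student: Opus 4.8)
The plan is to follow closely the proof of Lemma \ref{lem:1}, adapting each step to the Robin boundary value problem \eqref{eq:phi1}. The key point is that the operator $\phi \mapsto \phi + \eps^s(-\Delta)^{s/2}\phi$ supplemented with the Robin conditions $\alpha\phi + \beta\widetilde\N(\phi) = 0$ in $\C\Omega$ satisfies a maximum principle: since $0 \le \chi_E \le 1$ and the constants $0$ and $1$ are sub- and super-solutions (they satisfy the boundary condition because $\widetilde\N(\mathrm{const}) = 0$ and $\alpha, \beta \ge 0$ with $\alpha+\beta>0$), comparison gives $0 \le \P(x) \le 1$ on all of $\R^n$, which is (i). For the existence/uniqueness of $\P$ underpinning this, one appeals to the variational formulation associated with the quadratic form in \eqref{eq:Jepss0}, as in \cite{DRV}.

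For (ii), I would first argue that $\P$ converges (along a subsequence) to $\chi_E$ in the sense of distributions on $\Omega$. This follows from testing the equation against $\vphi \in C^\infty_c(\Omega)$: one has $\int_\Omega \P\,\vphi + \eps^s \int_\Omega \P\,(-\Delta)^{s/2}\vphi = \int_\Omega \chi_E\,\vphi$ (here care is needed because $(-\Delta)^{s/2}\vphi$ does not have compact support, but since $0\le\P\le1$ and $(-\Delta)^{s/2}\vphi$ decays like $|x|^{-n-s}$, the term $\eps^s\int_{\R^n}\P\,(-\Delta)^{s/2}\vphi$ is $O(\eps^s)\to 0$, using also the boundary condition to control the $\C\Omega$ part — alternatively one uses the weak formulation directly with the bilinear form restricted appropriately). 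Since $\{\P\}$ is bounded in every $L^q(\Omega)$ by (i), weak-$L^q$ compactness yields a subsequence converging weakly in $L^q(\Omega)$, and the limit must be $\chi_E$ by the distributional identification. This gives (ii).

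For (iii), I would reproduce verbatim the argument from Lemma \ref{lem:1}: using $0\le\P\le1$,
$$
\int_\Omega |\P - \chi_E|\, dx = \int_\Omega |(\P-1)\chi_E + \P(1-\chi_E)|\, dx \le \int_\Omega (1-\P)\chi_E\, dx + \int_\Omega \P(1-\chi_E)\, dx,
$$
and (ii) (tested against $\chi_E \in L^{q'}(\Omega)$ and $1-\chi_E \in L^{q'}(\Omega)$, which are legitimate since $\Omega$ is bounded) forces both terms on the right to tend to $0$. Hence $\P \to \chi_E$ strongly in $L^1(\Omega)$, and passing to a further subsequence gives a.e. convergence, which is (iii).

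The main obstacle — and the only place where this case genuinely differs from the $\Omega = \R^n$ case — is establishing the distributional convergence cleanly in the presence of the nonlocal Robin condition: one must make sure that the boundary term $\eps^s\int_{\C\Omega}(\cdots)$ arising when one tests the weak formulation is indeed negligible, which uses both the uniform bound $0\le\P\le1$ and the structural identity $\alpha\P + \beta\widetilde\N(\P) = 0$ to rewrite $\N(\P)$ in $\C\Omega$ in terms of $\P$; granting the well-posedness and maximum principle from \cite{DRV}, the rest is a routine transcription of Lemma \ref{lem:1}.
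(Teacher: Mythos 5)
Your proposal is correct and follows essentially the same route as the paper, which simply notes that the Robin conditions satisfy the maximum principle and that the proof is otherwise identical to Lemma \ref{lem:1}: maximum principle for (i), distributional identification plus the uniform bound for weak $L^q$ convergence in (ii), and the splitting $|\P-\chi_E|\le (1-\P)\chi_E+\P(1-\chi_E)$ for (iii). Your extra care with the exterior term $\eps^s\int_{\C\Omega}\P\,\N(\vphi)\,dx$ in the distributional step is a legitimate filling-in of a detail the paper leaves implicit, and is handled correctly.
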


\begin{proof}[Proof of Theorem \ref{thm:2}-(i)]
We recall that $E\subset \Omega$.
Proceeding as in the previous section, we write
\begin{align}
\J^s_\eps(E) =  \eps^{-s}\int_\Omega \chi_E (1-\P)  \, dx 
&  = \int_\Omega \chi_E  (-\Delta)^{s/2} \P \, dx\nonumber \\
& =  \int_\Omega \P  (-\Delta)^{s/2}  \chi_E \, dx -\int_{\C\Omega} \chi_E \N(\P)- \P \N(\chi_E)\, dx\nonumber \\
& =  \int_\Omega \P  (-\Delta)^{s/2}  \chi_E \, dx + \int_{\C\Omega} \P \N(\chi_E)\, dx \label{eq:introbin}
\end{align}
(recall that $E\subset \Omega$).
We note that the conditions $P_s^L(E,\Omega)<\infty$, $P_s(\Omega)<\infty$ together with Lemma \ref{eq:psi}  imply that
$\chi_{\C E} \psi_E \in L^1(\Omega)$ and $\chi_E \psi_{\C E} = \chi_E \psi_{\C E\cap\Omega } +\chi_E \psi_{\C \Omega}\leq \chi_E \psi_{\C E\cap\Omega } +\chi_\Omega \psi_{\C \Omega} \in L^1(\Omega)$. We can thus proceed as in the case $\Omega = \R^n$ with the first term in \eqref{eq:introbin}: 
Using  \eqref{eq:LPs}, we write
\begin{align}
 \int_\Omega \P  (-\Delta)^{s/2}  \chi_E \, dx 
& = c_{n,s}  \int_{\Omega} \P  (\chi_E \psi_{\C E} - \chi_{\C E} \psi_E)\, dx\nonumber  \\
& \longrightarrow c_{n,s}  \int_{\Omega} \chi_{E} \psi_{\C E}\, dx
= c_{n,s}  \int_{\R^n} \chi_{E} \psi_{\C E}\, dx= c_{n,s}  \int_{\R^n} \chi_{\C E} \psi_{E}\, dx.
\label{eq:lim1}
\end{align}

For the second term, we note that $ \N(\chi_E) =-c_{n,s} \psi_{E}$ for $x\in \C \Omega$,
 and using the boundary condition, we find (note that $\widetilde \N (\phi ) = \phi(x) - \frac{1}{\psi_\Omega(x)} \int_{\Omega} \frac{\phi(y)}{|x-y|^{n+s}} \, dy$):
 \begin{equation}\label{eq:bc2}
\P  (x) = \frac{\beta}{\alpha+\beta}	 \frac{1 }{\psi_\Omega(x)} \int_{\Omega} \frac{\P(y)}{|x-y|^{n+s}} \, dy\qquad x\in \R^n\setminus \Omega.
\end{equation}
Hence
\begin{align*}
 \int_{\C\Omega} \P \N(\chi_E)\, dx
 & =  - c_{n,s}\int_{\C\Omega}  \int_{\Omega}
 \frac{\beta(x)}{\alpha(x)+\beta(x)}	 \frac{\psi_{E}(x) }{\psi_\Omega(x)}  \frac{\P(y)}{|x-y|^{n+s}} \, dy
  dx.
 \end{align*}
We can pass to the limit using Lebesgue dominated convergence theorem since 
$$\left| \int_{\C\Omega}  
 \frac{\beta(x)}{\alpha(x)+\beta(x)}	 \frac{\psi_{E}(x) }{\psi_\Omega(x)}  \frac{ 1}{|x-y|^{n+s}} 
  dx\right| \leq \frac{\psi_{E}(x)\psi_{\C\Omega} (x)}{\psi_\Omega(x)}  \leq \psi_{\C\Omega}(x)\in L^1(\Omega)  $$
(recall that $P_s(\Omega)<\infty$)
and we get
\begin{align}
\lim_{\eps\to 0} \int_{\C\Omega}\P \N(\chi_E)\, dx
& =   - c_{n,s}\int_{\C\Omega}  \int_{\Omega}
 \frac{\beta(x)}{\alpha(x)+\beta(x)}	 \frac{\psi_{E}(x) }{\psi_\Omega(x)}  \frac{\chi_E(y)}{|x-y|^{n+s}} \, dy
  dx\nonumber\\
  & =   - c_{n,s}\int_{\C\Omega}  
 \frac{\beta(x)}{\alpha(x)+\beta(x)}	 \frac{\psi_{E}(x)^2 }{\psi_\Omega(x)}   dx. \label{eq:lim2}
\end{align}
Putting together \eqref{eq:introbin}, \eqref{eq:lim1} and \eqref{eq:lim2}, we deduce
 \begin{align}
 \lim_{\eps\to 0} \J^s_\eps & = 
  c_{n,s} \left[ \int_{\R^n} \chi_{\C E} \psi_{E}\, dx
   - \int_{\C\Omega}  
 \frac{\beta}{\alpha+\beta}	 \frac{\psi_{E}(x)^2 }{\psi_\Omega(x)}   dx\right]\nonumber\\
 & = 
  c_{n,s} \left[ \int_{\Omega} \chi_{\C E} \psi_{E}\, dx 
   +\int_{\C\Omega} ( \psi_{E}
 -  \frac{\beta}{\alpha+\beta}	 \frac{\psi_{E}(x)^2 }{\psi_\Omega(x)} )  dx\right]\label{same}\\
& =  c_{n,s} \left[  P_s^L(E,\Omega) 
+ \int_{\C\Omega}  \frac{\alpha}{\alpha+\beta} \psi_{E}(x)\, dx   + \int_{\C\Omega}   \frac{\beta}{\alpha+\beta}	 \frac{ \psi_{E} (x)(\psi_{\Omega} (x)-\psi_E(x))}{\psi_\Omega(x)} \, dx  \right]\nonumber
\end{align}
and the result follows.
\end{proof}

\begin{proof}[Proof of Theorem \ref{thm:2}-(ii)]
For any set $E$,
the boundary condition  gives
$$ 
-\P \N(\P) = c_{n,s} \psi_\Omega\left(  \frac{\alpha}{\alpha+\beta} |\P|^2 + \frac{\beta}{\alpha+\beta}| \widetilde \N(\P)|^2\right)$$
so the formula \eqref{eq:Jepss0} implies
\begin{equation}\label{eq:JepssO}
\begin{aligned}
\J^s_\eps(E) &  = \eps^{-s}\int_\Omega (\chi_E-\P)^2 \, dx  
+  \frac  { c_{n,s}}  2  \int_{\R^{2n}\setminus (\C\Omega)^2}  \frac{|\P(x) - \P(y)|^2}{|x-y|^{n+s}} \, dx \, dy\\
&\qquad \qquad + c_{n,s} 
\int_{\C\Omega} \psi_\Omega\left[ \frac{\alpha}{\alpha+\beta} |\P|^2 + \frac{\beta}{\alpha+\beta}| \widetilde \N(\P)|^2\right]  \, dx.
\end{aligned}
 \end{equation}

We can now complete the proof of Theorem \ref{thm:2} by proceeding as in the proof of Theorem \ref{thm:1}:
Let  $\{E_\eps\}_{\eps>0}$ be a sequence of sets which converges to $E$ in $L^1$ (and in $L^2$)  such that $\J^s_\eps(E_\eps)$ is bounded.
Equality \eqref{eq:JepssO} thus implies that $\phi^\eps_{E_\eps}$ converges strongly in $L^2(\Omega)$ to $\chi_E$.
Following \cite{DRV}[Proposition 3.1], we note that the space $W$ equipped with the norm
$$\|u \|_{W}^2 = \int_\Omega u^2\, dx + \int_{\C\Omega}  \frac{\alpha}{\alpha+\beta}\psi_\Omega u^2\, dx
+  \int_{\R^{2n}\setminus (\C\Omega)^2}  \frac{|u(x) - u(y)|^2}{|x-y|^{n+s}} \, dx \, dy$$
is a Hilbert space (note that $\psi_\Omega \in L^1(\C\Omega)$ since $P_s(\Omega)<\infty$)
and \eqref{eq:JepssO} implies that $\phi^\eps_{E_\eps}$ is bounded in $W$ and thus weakly converges to $\phi_0$ in that space. 
We already know that $\phi_0(x)=\chi_E(x)$ for $x\in \Omega$, and $\phi_0$ can be identified in $\C\Omega$ by taking the limit in \eqref{eq:bc2} (in $ \mathcal D'(\Omega)$), showing that  
$$\phi_0(x) = \frac{\beta}{\alpha+\beta} \frac{\psi_E}{\psi_\Omega} \mbox{ a.e. in } \C\Omega.$$
Similarly, $ \widetilde \N(\P)$ converges weakly in $L^2(\C\Omega , \frac{\beta}{\alpha+\beta}\psi_\Omega\, dx) $
to$-\frac{\alpha}{\alpha+\beta} \frac{\psi_E}{\psi_\Omega}$ (the limit can be easily identified using the exterior boundary condition).
The lower semicontinuity of the norms then gives
\begin{align*}
\liminf_{\eps\to 0} \J^s_\eps(E_\eps) 
& \geq 
 \frac  { c_{n,s}}  2  \int_{\R^{2n}\setminus (\C\Omega)^2}  \frac{|\phi_0(x) - \phi_0(y)|^2}{|x-y|^{n+s}} \, dx \, dy\\
&\qquad \qquad + c_{n,s} 
\int_{\C\Omega} \psi_\Omega\left[ \frac{\alpha}{\alpha+\beta} \left|\frac{\beta}{\alpha+\beta} \frac{\psi_E}{\psi_\Omega}\right|^2 + \frac{\beta}{\alpha+\beta}\left| \frac{\alpha}{\alpha+\beta} \frac{\psi_E}{\psi_\Omega} \right|^2\right]  \, dx\\
& =
 \frac  { c_{n,s}}  2  \int_{\Omega^2}  \frac{|\phi_0(x) - \phi_0(y)|^2}{|x-y|^{n+s}} \, dx \, dy
+  c_{n,s}  \int_{\Omega}\int_{\C\Omega}  \frac{|\phi_0(x) - \phi_0(y)|^2}{|x-y|^{n+s}} \, dx \, dy
 \\
&\qquad \qquad + c_{n,s} 
\int_{\C\Omega} \frac{\alpha\beta}{(\alpha+\beta)^2}  \frac{\psi_E^2}{\psi_\Omega} \, dx.
\end{align*}
Finally we have
$$   \frac 1 2   \int_{\Omega^2}  \frac{|\phi_0(x) - \phi_0(y)|^2}{|x-y|^{n+s}} \, dx \, dy
= \frac 1 2   \int_{\Omega^2}  \frac{|\chi_E(x) -\chi_E(y)|}{|x-y|^{n+s}} \, dx \, dy = P_s^L(E,\Omega)
$$
and 
a simple computation gives
\begin{align*}
\int_{ \Omega}\int_{\C\Omega}  \frac{|\phi_0(x) - \phi_0(y)|^2}{|x-y|^{n+s}} \, dx \, dy
& = 
\int_{\C \Omega}\int_{\Omega}  \frac{\phi_0(x)^2 - 2 \phi_0(x)\chi_E(y) +\chi_E (y)^2}{|x-y|^{n+s}} \, dy \, dx\\
& = 
\int_{\C \Omega} (\phi_0(x)^2 \psi_\Omega - 2 \phi_0(x)\psi_E(x) +\psi_E (x)) \, dx\\
& = 
\int_{\C \Omega} \left(\frac{(-2\alpha\beta-\beta^2)}{(\alpha+\beta)^2}\frac{\psi_E^2}{\psi_\Omega} +\psi_E (x) \right)\, dx.
\end{align*}
We deduce
\begin{align*}
\liminf_{\eps\to 0} \J^s_\eps(E_\eps) 
& \geq 
 c_{n,s}P_s^L(E,\Omega)
 + c_{n,s} 
\int_{\C\Omega}  \left(\frac{(-2\alpha\beta-\beta^2)}{(\alpha+\beta)^2}\frac{\psi_E^2}{\psi_\Omega} +\psi_E (x) +\frac{\alpha\beta}{(\alpha+\beta)^2}  \frac{\psi_E^2}{\psi_\Omega}  \right) \, dx.\\
& \geq 
 c_{n,s}P_s^L(E,\Omega)
 + 
 c_{n,s} 
\int_{\C\Omega} \left(\frac{ -\beta}{\alpha+\beta}  \frac{\psi_E^2}{\psi_\Omega} +\psi_E (x)\right)\, dx
 \end{align*}
 which completes the proof since $\frac{ -\beta}{\alpha+\beta}  \frac{\psi_E^2}{\psi_\Omega} +\psi_E (x) =  \frac{\alpha}{\alpha+\beta} \psi_{E}(x)   +  \frac{\beta}{\alpha+\beta}	 \frac{ \psi_{E} (x)(\psi_{\Omega} (x)-\psi_E(x))}{\psi_\Omega(x)} $.
\end{proof}

\section{The case $s=2$ - Proof of Theorem \ref{thm:4}}
When $s=2$, the function $\P$ solves the boundary value problem:
\begin{equation}\label{eq:phirob}
\begin{cases}
 \P  -\eps^2 \Delta\P = \chi_{ E}  & \mbox{ in } \Omega \\
\alpha  \P +\eps \beta \nabla  \P \cdot n = 0  & \mbox{ on } \partial \Omega.
\end{cases}
\end{equation}
and we have
$$
\J_\eps(E) = \eps^{-1} \int_\Omega  \chi_E (1-\P)\, dx =  -\eps \int_E \Delta \P \, dx  .
$$
The proof of Theorem \ref{thm:4} is very different, and much more delicate than the case $s\in(0,1)$.
We start with the proof of Proposition \ref{prop:4} which identifies the limit of $\J_\eps(E)$ for a fixed set $E$.

\begin{proof}[Proof of Proposition \ref{prop:4}]
For a set $E$ such that $P(E)<\infty$, we write
\begin{align*}
\J_\eps(E)& =  -\eps \int_E \Delta \P \, dx  \\
& = - \eps \int_{\pa^* E } \na \P \cdot \nu_E(x)\, d\H^{n-1}(x) 
\end{align*}
where 
$\pa^* E$ is the reduced boundary of $E$ and 
$\nu_E$ is the outward pointing unit normal vector (see Definition \ref{def:reduced}).
To pass to the limit, we take $x_0\in \pa^* E$ and define 
the function $w^\eps (x) = \P (x_0+\eps x)$, which solves
$$ 
\begin{cases}
 w^\eps -\Delta w^\eps = \chi_{ E}(x_0+\eps x) & \mbox{ in } \Omega_\eps\\
\alpha^\eps  w^\eps + \beta^\eps \nabla  w^\eps \cdot n = 0  & \mbox{ on } \partial \Omega_\eps.
\end{cases}
$$
with $\alpha^\eps= \alpha(x_0+\eps x)$, $\beta^\eps= \beta(x_0+\eps x)$ and $\Omega_\eps=\eps^{-1}(\Omega -x_0)$.
It is readily seen that $0\leq  w^ \eps\leq 1$ (maximum principle) and  that 
$w^\eps\in C^{1,\gamma}$ so that
$\eps \na\P (x_0)= \na  w^\eps(0)$ is well defined.
We conclude thanks to the following lemma:
\begin{lemma}\label{lem:3}
There exists a constant $C$ such that $|\eps \na\P (x_0)|=|\na w^\eps (0) |\leq C$ for all $x_0$.
Furthermore, 
\item If $x_0\in \pa^* E\cap \Omega$, then 
$$ 
\eps \na\P (x_0) \to  - \frac 1 2  \nu_E(x_0) 
$$
\item If $x_0\in  \pa^* E\cap \pa\Omega$ and $\pa\Omega$ is differentiable at $x_0$, then $\nu_E(x)=n(x)$ and 
$$ 
\eps \na\P (x_0) \to - \frac{\alpha}{\alpha+\beta} n(x_0)
$$
\end{lemma}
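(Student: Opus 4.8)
\emph{Overview and uniform bound.} The plan is a blow-up argument around a reduced-boundary point $x_0\in\pa^*E$. First, the uniform estimate: since $0\le w^\eps\le 1$ by the maximum principle and the right-hand side $\chi_E(x_0+\eps x)$ is bounded in $L^\infty$, interior $W^{2,p}$ (hence $C^1$) estimates bound $w^\eps$ in $C^1_{\loc}$ on balls lying well inside $\Omega_\eps$, with constants independent of $\eps$ and $x_0$. Near $\pa\Omega_\eps$ one uses that, in a fixed-size neighborhood of any of its boundary points, $\Omega_\eps$ is the subgraph of $g_\eps(x')=\eps^{-1}g(\eps x')$ where $g$ is a $C^{1,\alpha}$ representation of $\pa\Omega$: then $\|\na g_\eps\|_\infty\to 0$ and $[\na g_\eps]_{C^\alpha}=\eps^\alpha[\na g]_{C^\alpha}$, while the rescaled coefficients $\alpha^\eps,\beta^\eps$ remain bounded and Lipschitz with $\alpha^\eps+\beta^\eps\ge\sigma$. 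Standard regularity for the Robin problem on $C^{1,\alpha}$ domains then gives a bound for $w^\eps$ in $C^1$ up to $\pa\Omega_\eps$, on fixed-size balls, uniformly in $\eps$ and $x_0$; in particular $|\na w^\eps(0)|\le C$.

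\emph{Compactness and limit problem.} Fix a sequence $\eps_k\to 0$. By the bound above and Arzel\`a--Ascoli, along a subsequence $w^{\eps_k}\to w$ in $C^1_{\loc}$. The rescaled domains converge locally to $D$, where $D=\R^n$ when $x_0\in\Omega$ and $D=\{x:n(x_0)\cdot x<0\}$ when $x_0\in\pa\Omega$ (using differentiability of $\pa\Omega$ at $x_0$, provided by the $C^{1,\alpha}$ hypothesis). Moreover $\chi_E(x_0+\eps_k x)\to\chi_H$ in $L^1_{\loc}$, where $H=\{x:\nu_E(x_0)\cdot x<0\}$ is the blow-up of $E$ at the reduced-boundary point $x_0$ (Section~\ref{sec:per}); when $x_0\in\pa\Omega$, the inclusion $E\subset\Omega$ forces $H\subseteq D$, and since both are half-spaces through the origin this gives $\nu_E(x_0)=n(x_0)$ and $H=D$. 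Passing to the limit in the interior equation, and in the pointwise relation $\alpha^\eps w^\eps+\beta^\eps\na w^\eps\cdot n=0$ on the boundary (using $\alpha^\eps\to\alpha(x_0)$, $\beta^\eps\to\beta(x_0)$, convergence of the outward normals, and the $C^1$-up-to-the-boundary convergence), we find that $w$ solves $w-\Delta w=\chi_H$ in $D$ with $0\le w\le 1$, together with $\alpha(x_0)w+\beta(x_0)\na w\cdot n(x_0)=0$ on $\pa D$ when $x_0\in\pa\Omega$.

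\emph{One-dimensional reduction and conclusion.} The limiting problem is invariant under translations and rotations fixing the direction $\nu_E(x_0)$, so by uniqueness (an energy estimate, using $0\le w\le 1$) $w(x)=v\bigl(\nu_E(x_0)\cdot x\bigr)$. When $x_0\in\Omega$, $v$ is the bounded solution of $v-v''=\chi_{\{t<0\}}$ on $\R$, i.e. $v(t)=1-\tfrac12 e^{t}$ for $t<0$ and $v(t)=\tfrac12 e^{-t}$ for $t>0$, so $v'(0)=-\tfrac12$ and $\na w(0)=-\tfrac12\,\nu_E(x_0)$. When $x_0\in\pa\Omega$, $v$ is the bounded solution of $v-v''=1$ on $(-\infty,0)$ with $\alpha(x_0)v(0)+\beta(x_0)v'(0)=0$ (the outer normal of $D$ being $n(x_0)$), namely $v(t)=1-\tfrac{\alpha(x_0)}{\alpha(x_0)+\beta(x_0)}e^{t}$, so $\na w(0)=-\tfrac{\alpha(x_0)}{\alpha(x_0)+\beta(x_0)}\,n(x_0)$. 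Since the limit does not depend on the subsequence, the whole family $\eps\na\P(x_0)=\na w^\eps(0)$ converges to the stated value.

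\emph{Main obstacle.} The delicate point is the uniform (in $\eps$ and $x_0$) gradient estimate \emph{up to the boundary} for a Robin problem whose coefficients may approach the Dirichlet regime ($\beta(x_0)=0$) or the Neumann regime ($\alpha(x_0)=0$). A uniform $H^1_{\loc}$ bound is cheap, coming from testing the equation with $w^\eps$ times a cutoff, because the resulting boundary term has a favorable sign irrespective of the ratio $\alpha^\eps/\beta^\eps$; upgrading this to a $C^1$ bound up to $\pa\Omega_\eps$, uniformly across the whole admissible range of $(\alpha,\beta)$ and using only the $C^{1,\alpha}$-flatness of the rescaled boundary, is the part requiring care.
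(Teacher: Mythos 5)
Your proposal is correct and follows essentially the same route as the paper: a blow-up at $x_0$, uniform $C^{1}$ (in the paper, $C^{1,\gamma}$) estimates up to $\pa\Omega_\eps$ via interior elliptic regularity and Robin boundary regularity on the rescaled domain, local uniform convergence of $w^\eps$ and $\na w^\eps$ to the unique bounded solution of the whole-space (resp.\ half-space Robin) limit problem, and the explicit one-dimensional solution $\vphi$ (resp.\ $\vphi_1$) to read off the gradient at the origin. The only differences are that you spell out a few points the paper leaves implicit (the identification $\nu_E(x_0)=n(x_0)$ from $E\subset\Omega$, and the subsequence/uniqueness argument), while the paper handles the delicate uniform boundary estimate by citing Ladyzhenskaya's theorem for Robin problems.
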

This lemma (together with Lebesgue dominated convergence theorem) implies
\begin{align*}
\lim_{\eps\to 0} \J_\eps(E) 
& = \frac 1 2 \int_{\pa^* E \cap \Omega} \,  d\H^{n-1}(x) +  \int_{\pa^* E\cap \pa\Omega}  \frac{\alpha}{\alpha+\beta}  \, d\H^{n-1}(x) \\
& = \frac 1 2 \H^{n-1} (\pa^* E \cap \Omega) +  \int_{\pa\Omega} \chi_E \frac{\alpha}{\alpha+\beta}  \, d\H^{n-1}(x) 
\end{align*}
and the result follows since $ \H^{n-1} (\pa^* E \cap \Omega) =P(E,\Omega)$.
\end{proof}

\begin{proof}[Lemma \ref{lem:3}]

If $x_0\in \Omega$, then by definition of the reduced boundary, we have $\chi_{ E}(x_0+\eps x) \to \chi_{\{x\cdot \nu_E(x_0)<0\}} $ in $L^1_{loc}$ (\cite{giusti}, Theorem 3.8).
Furthermore,  for all $R>0$ we have $B_R(0)\subset \Omega_\eps$ for $\eps$ small enough
so standard elliptic regularity theory (using the fact that $|w^\eps|\leq 1$) implies that $w^\eps$ is bounded in $C^{1,\gamma}(B_R)$. In  particular, $w^\eps$
(resp. $\na w^\eps$)
converges locally uniformly  to $w_0$ (resp. $\na w_0$) unique bounded solution of 
$$ 
w_0 -\Delta w_0 = \chi_{\{x\cdot \nu_E(x_0)<0\}} \mbox{ in } \R^n. 
$$
This solution is of the form $w_0(x) = \vphi(x\cdot \nu_E(x_0))$ where $\vphi:\R\to[0,1]$ solves
$ \vphi-\vphi'' = \chi_{\{x<0\}}$.
We easily find 
$$
\vphi(x) = \begin{cases}
1-\frac 1 2 e^{x} & \mbox{ for } x<0\\
\frac 1 2 e^{-x} & \mbox{ for } x>0\
\end{cases}$$
and so $\na w_0  = \vphi'(0) \nu_E(x_0) =-\frac 1 2 \nu_E(x_0)$.

\medskip

If $x_0\in\pa \Omega$, we have $\nu_E(x_0)=n(x_0)$ and (as above)
 $\chi_{E}(x_0+\eps x) \to \chi_{\{x\cdot n(x_0)<0\}}        $ in $L^1_{loc}$. 
 Furthermore, since $\alpha_\eps$, $\beta_\eps$ are bounded and Lipschitz uniformly in $\eps$,
 boundary regularity for Robin boundary value problems (see \cite{LADYZHENSKAYA} Theorem 2.1 Chap. 10) implies that $w^\eps$ is bounded in $C^{1,\gamma} (\overline {\Omega^\eps})$.
 Therefore
$w^\eps$ (resp. $\na w^\eps$) converges locally uniformly to $w_0$  (resp. $\na w_0$)  unique bounded solution of 
$$ 
\begin{cases}
w_0 -\Delta w_0 =1 & \mbox{ in } \{x\cdot n(x_0) <0\}\\
\alpha(x_0) w_0 + \beta(x_0) \nabla w_0 \cdot n(x_0) = 0& \mbox{ on } \{x\cdot n (x_0) =0\}.
\end{cases}
$$
This solution is of the form  $w_0(x) = \vphi_1(x\cdot n(x_0))$ where $\vphi_1:[-\infty,0]\to[0,1]$ solves
$ \vphi_1-\vphi_1'' = 1$ and $\alpha \vphi_1(0) +\beta\vphi_1'(0) = 0$.
We easily find 
$$
\vphi_1(x) = 1-\frac{\alpha}{\alpha+\beta} e^{x}
$$
and so $\na w_0  = \vphi_1'(0) n(x_0) =- \frac{\alpha}{\alpha+\beta}  n(x_0)$.


\end{proof}

\begin{remark}\label{rem:A1}
When $\Delta$ is replaced by an anisotropic elliptic operator as in \eqref{eq:022},  we get
$$\J_\eps(E) = - \eps \int_{\pa^* E } A \na \P \cdot \nu_E(x)\, d\H^{n-1}(x) $$
and the argument above can be adapted, with
$ 
\eps A \na\P (x_0) \to  - \frac 1 {2 \sqrt{\nu_E^T A \nu_E}} A \nu_E(x_0) = - \frac 1 2 \frac{A \nu_E(x_0) }{\| \nu_E\|_A} 
$ if $x_0\in  \pa^* E\cap \Omega$ and 
$ 
\eps A \na\P (x_0) \to - \frac{\alpha}{\alpha+\beta} \frac{ A n(x_0)}{\| n\|_A}
$ 
if $x_0\in  \pa^* E\cap \pa\Omega$ and $\pa\Omega$ is differentiable at $x_0$.
It follows that
$$ 
\lim_{\eps\to 0}
\J_\eps(E) = 
 \frac 1 2  \int_{\pa^* E } \| \nu_E\|_A   \, d\H^{n-1}(x)  +
 \int_{\pa\Omega } \frac{\alpha}{\alpha+\beta} \| n\|_A  \chi_E(x) \, d\H^{n-1}(x).  
$$ 
\end{remark}

We now turn to the proof of Theorem \ref{thm:4}, which follows from the following proposition:
\begin{proposition}\label{prop:Gamma}
Under the assumptions of Theorem \ref{thm:4}, the following holds:
\item[(i)] For any family $\{ E_\eps\}_{\eps>0}$ that converges to $E$ in $L^1(\Omega)$,
 $$\liminf_{\eps\to 0} \J_\eps(E_\eps) \geq  \F_{\frac{\alpha}{\alpha+\beta} } (E).$$

\item[(ii)] Given a set  $E$ such that $P(E,\Omega)<\infty$, there exists a sequence $\{ E_\eps\}_{\eps>0}$ that converges to $E$ in $L^1(\Omega)$ such that
 $$\limsup_{\eps\to 0} \J_\eps(E_\eps) \leq  \F_{\frac{\alpha}{\alpha+\beta} } (E).$$
\end{proposition}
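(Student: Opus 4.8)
Both inequalities are extracted from the representation \eqref{eq:Jepsrb} combined with the boundary condition \eqref{eq:bc0}: since $\eps\na\Pe\cdot n=-\tfrac{\alpha}{\beta}\,\mathrm{tr}\,\Pe$ on $\{\beta>0\}$ and $\mathrm{tr}\,\Pe=0$ on $\{\beta=0\}$, one has, for $E=E_\eps$ and the associated potential $\Pe=\phi^\eps_{E_\eps}$ (which lies in $C^{1,\gamma}(\overline\Omega)$ by elliptic regularity, as in the proof of Proposition \ref{prop:4}),
$$
\J_\eps(E_\eps)=\eps^{-1}\int_\Omega|\chi_{E_\eps}-\Pe|^2\,dx+\eps\int_\Omega|\na\Pe|^2\,dx+\int_{\pa\Omega}\tfrac{\alpha}{\beta}\,(\mathrm{tr}\,\Pe)^2\,d\H^{n-1},
$$
a sum of three non-negative terms (the last integrand taken to be $0$ on $\{\beta=0\}$). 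For the $\liminf$ we may assume the left-hand side is bounded along a subsequence, so the first term forces $\Pe\to\chi_E$ in $L^2(\Omega)$, hence a.e.\ after passing to a further subsequence.

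\noindent\textbf{Proof of (i).} Set $\Psi(t):=\int_0^t\min(\tau,1-\tau)\,d\tau$, so $\Psi(0)=0$, $\Psi(1)=\tfrac14$, $\Psi\le\tfrac14$ on $[0,1]$, and note the pointwise bound $\Psi'(\Pe)^2=\min(\Pe^2,(1-\Pe)^2)\le|\chi_{E_\eps}-\Pe|^2$. By Young's inequality $\eps^{-1}|\chi_{E_\eps}-\Pe|^2+\eps|\na\Pe|^2\ge 2\,\Psi'(\Pe)\,|\na\Pe|=2|\na(\Psi(\Pe))|$ a.e., so $\J_\eps(E_\eps)\ge 2\int_\Omega|\na\Psi(\Pe)|+\int_{\pa\Omega}\tfrac{\alpha}{\beta}(\mathrm{tr}\,\Pe)^2$. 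Fix $\rho>0$ and split $\Omega$ into $\Omega_\rho:=\{x\in\Omega:\dist(x,\pa\Omega)>\rho\}$ and the tube $T_\rho:=\Omega\setminus\Omega_\rho$. On $\Omega_\rho$, lower semicontinuity of the total variation together with $\Psi(\Pe)\to\Psi(\chi_E)=\tfrac14\chi_E$ in $L^1$ gives $\liminf_\eps 2\int_{\Omega_\rho}|\na\Psi(\Pe)|\ge\tfrac12 P(E,\Omega_\rho)$. On $T_\rho$, writing $x=(x',\tau)$ in Fermi coordinates ($x'\in\pa\Omega$, $\tau=\dist(x,\pa\Omega)\in(0,\rho)$, with volume element $(1+o_\rho(1))\,d\H^{n-1}\,d\tau$), one has $\int_0^\rho|\pa_\tau\Psi(\Pe)(x',\tau)|\,d\tau\ge|\Psi(\Pe(x',r))-\Psi(\mathrm{tr}\,\Pe(x'))|$ for every $r\in(0,\rho)$; averaging over $r$, adding $\tfrac{\alpha(x')}{\beta(x')}(\mathrm{tr}\,\Pe(x'))^2$, bounding from below by $\min_{b\in[0,1]}\big(2|\Psi(\Pe(x',r))-\Psi(b)|+\tfrac{\alpha(x')}{\beta(x')}b^2\big)$, using $\Pe(x',r)\to\chi_E(x',r)$ a.e.\ together with the elementary identity
$$
\min_{b\in[0,1]}\Big(2\big(\tfrac14-\Psi(b)\big)+\tfrac{\alpha}{\beta}\,b^2\Big)=\min\Big(\tfrac12,\tfrac{\alpha}{\alpha+\beta}\Big),
$$
and finally Fatou's lemma, one gets $\liminf_\eps\big(2\int_{T_\rho}|\na\Psi(\Pe)|+\int_{\pa\Omega}\tfrac{\alpha}{\beta}(\mathrm{tr}\,\Pe)^2\big)\ge(1+o_\rho(1))\int_{\pa\Omega}\min(\tfrac12,\tfrac{\alpha}{\alpha+\beta})\big(\tfrac1\rho\int_0^\rho\chi_E(x',r)\,dr\big)d\H^{n-1}$. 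Adding the two lower bounds and letting $\rho\to0$ (so $P(E,\Omega_\rho)\uparrow P(E,\Omega)$ and $\tfrac1\rho\int_0^\rho\chi_E(x',r)\,dr\to\mathrm{tr}\,\chi_E(x')$ for $\H^{n-1}$-a.e.\ $x'$) yields $\liminf_\eps\J_\eps(E_\eps)\ge\F_{\frac{\alpha}{\alpha+\beta}}(E)$.

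\noindent\textbf{Proof of (ii).} If $\tfrac{\alpha}{\alpha+\beta}\le\tfrac12$ $\H^{n-1}$-a.e.\ on $\pa\Omega\cap\{\mathrm{tr}\,\chi_E=1\}$, take $E_\eps\equiv E$ and apply Proposition \ref{prop:4}. Otherwise, set $\Sigma:=\{x\in\pa\Omega:\tfrac{\alpha(x)}{\alpha(x)+\beta(x)}>\tfrac12\}$, pick $\delta_\eps\to0$ with $\delta_\eps/\eps\to\infty$, and let $E_\eps:=E\setminus\{x\in\Omega:\dist(x,\pa\Omega)<\delta_\eps,\ \pi(x)\in\Sigma\}$, where $\pi$ is the nearest-point projection onto $\pa\Omega$. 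Then $E_\eps\to E$ in $L^1(\Omega)$; moreover $\mathrm{tr}\,\chi_{E_\eps}=0$ on $\Sigma$ while $\mathrm{tr}\,\chi_{E_\eps}=\mathrm{tr}\,\chi_E$ off $\Sigma$, and $\pa^* E_\eps\cap\Omega$ is the union (up to a set of vanishing measure) of $\pa^* E\cap\Omega$, a ``bottom face'' over $\Sigma\cap\{\mathrm{tr}\,\chi_E=1\}$ of $\H^{n-1}$-measure tending to $\H^{n-1}(\Sigma\cap\{\mathrm{tr}\,\chi_E=1\})$, and lateral walls over $\pa\Sigma$ of $\H^{n-1}$-measure $\le C\delta_\eps\,\H^{n-2}(\pa\Sigma)\to0$ (this is where hypothesis \eqref{eq:n2} enters). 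Writing $\J_\eps(E_\eps)=-\eps\int_{\pa^* E_\eps}\na\phi^\eps_{E_\eps}\cdot\nu_{E_\eps}\,d\H^{n-1}$ and adapting the blow-up of Lemma \ref{lem:3} to these $\eps$-dependent sets — at scale $\eps\ll\delta_\eps$, near an interior interface point $E_\eps$ blows up to a half-space (so $\eps\na\phi^\eps_{E_\eps}\to-\tfrac12\nu_{E_\eps}$), near a point of $\pa\Omega$ over $\Sigma$ it is locally empty (so $\eps\na\phi^\eps_{E_\eps}\to0$), and near a point of $\pa\Omega$ off $\Sigma$ it coincides with $E$ (so $-\eps\na\phi^\eps_{E_\eps}\cdot n\to\tfrac{\alpha}{\alpha+\beta}$) — and using the uniform bound $|\eps\na\phi^\eps_{E_\eps}|\le C$ to discard the vanishing-area lateral walls, one obtains $\J_\eps(E_\eps)\to\tfrac12 P(E,\Omega)+\tfrac12\H^{n-1}(\Sigma\cap\{\mathrm{tr}\,\chi_E=1\})+\int_{\{\mathrm{tr}\,\chi_E=1\}\setminus\Sigma}\tfrac{\alpha}{\alpha+\beta}\,d\H^{n-1}=\F_{\frac{\alpha}{\alpha+\beta}}(E)$.

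\noindent\textbf{Main obstacle.} The crux is the boundary estimate in (i): unlike the interior contribution, the boundary term is not governed by lower semicontinuity of a total variation, and one must genuinely establish the scalar competition inequality above, while controlling the trace of $\phi^\eps_{E_\eps}$ and the contribution of any piece of $\pa^* E$ lying in the shrinking tube $T_\rho$. In (ii), the technical heart is the construction of $E_\eps$ across the transition set $\pa\Sigma$ — which needs a tubular-neighborhood/projection set-up for the merely $C^{1,\alpha}$ boundary together with assumption \eqref{eq:n2} — and the verification that Lemma \ref{lem:3}'s blow-up remains valid for the $\eps$-dependent sets $E_\eps$.
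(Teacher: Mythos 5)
Your part (i) is correct in substance and reaches the same lower bound through a genuinely different device. Both you and the paper start from the Modica--Mortola form \eqref{eq:Jepsrb} plus the Robin condition and the Young/chain-rule inequality $\J_\eps(E_\eps)\ge \int_\Omega|\na F(\Pe)|\,dx+\int_{\pa\Omega}\frac{\alpha}{\beta}|\Pe|^2\,d\H^{n-1}$ (your $2\Psi$ is the paper's $F$). Where you diverge is in how the boundary term is extracted: the paper extends $\Pe$ by zero to $\R^n$, so that $\int_{\R^n}|\na F(\overline{\Pe})|$ absorbs the trace jump $F(\Pe)|_{\pa\Omega}$, writes the boundary penalty as $G(t)=\frac{\alpha}{\beta}t^2-F(t)\ge\min\{0,\frac{\alpha}{\alpha+\beta}-\frac12\}$, and interpolates between the interior and extended bounds with a cutoff $\vphi\to\chi_E$ on $\pa\Omega$; you instead slice a collar $T_\rho$ along normal lines and minimize the scalar competition $2(\frac14-\Psi(b))+\frac{\alpha}{\beta}b^2$ over the trace value $b$. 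Your identity $\min_b(\cdot)=\min(\frac12,\frac{\alpha}{\alpha+\beta})$ is correct and encodes exactly the same information as the paper's bound on $G$. The slicing route is arguably more transparent about \emph{why} the coefficient $\min(\frac12,\frac{\alpha}{\alpha+\beta})$ appears, but it costs you a Fermi-coordinate/tubular-neighborhood setup that is delicate for a boundary that is only $C^{1,\alpha}$ (the nearest-point projection need not be single-valued in any uniform collar unless $\pa\Omega$ is $C^{1,1}$); this is repairable with local graph coordinates, whereas the paper's zero-extension trick sidesteps the issue entirely.

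In part (ii) there is a genuine gap at the ``bottom face.'' Your construction (retract $E$ by $\delta_\eps$ over $\Sigma$) is the same as the paper's ($E_\eps=E\cap\Lambda_\eps$ with $\delta_\eps=\eps^\gamma$), and writing $\J_\eps(E_\eps)=-\eps\int_{\pa^*E_\eps}\na\phi^\eps_{E_\eps}\cdot\nu_{E_\eps}\,d\H^{n-1}$ is the right start. But you then claim that at a point $x_0$ of the newly created interface $\{\dist(\cdot,\pa\Omega)=\delta_\eps,\ \pi(\cdot)\in\Sigma\}$ the set $E_\eps$ blows up to a half-space, so that $\eps\na\phi^\eps_{E_\eps}(x_0)\to-\frac12\nu_{E_\eps}(x_0)$. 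These points move with $\eps$: the blow-up requires $\chi_E(x_0(\eps)+\eps x)\to 1$ in $L^1_{loc}$, i.e.\ a \emph{uniform} density-one statement for $E$ along the moving slices, which does not follow from $x_0(\eps)$ being (for each fixed $\eps$) an $\H^{n-1}$-a.e.\ point of the slice, and the subsequent dominated-convergence step over the $\eps$-dependent surface $\pa^*E_\eps$ is likewise not justified. The paper avoids identifying any limit there: it splits $\phi^\eps_{E_\eps}=v_1^\eps+v_2^\eps$ with $v_1^\eps=K_\eps\star\chi_{E_\eps}$, invokes the universal bound $|\eps\na v_1^\eps|\le\frac12$ of Lemma \ref{lem:half} (valid for \emph{any} set, no blow-up needed) to bound the entire $v_1^\eps$ contribution by $\frac12\H^{n-1}(\pa^*E_\eps)=\frac12 P(E_\eps)$ --- an upper bound is all the $\limsup$ requires --- and then controls $v_2^\eps$ via Lemma \ref{lem:vv} and a blow-up only at the \emph{fixed} boundary points of $\pa^*E\cap\pa\Omega\setminus\Sigma$. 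To repair your argument, replace the bottom-face blow-up by this decomposition and the $\le\frac12$ bound; as written, the claimed convergence at the bottom face is unsupported.
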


\begin{proof}[Proof of Proposition \ref{prop:Gamma} - Part I: the $\liminf$ condition]
We use the formula \eqref{eq:Jepsrb} for $\J_\eps$.

\noindent{\bf Neumann boundary conditions.} We start  with the case of Neumann boundary conditions ($\alpha=0$) which is similar to the case $\Omega=\R^n$. In that case  \eqref{eq:Jepsrb} gives 
\begin{equation}\label{eq:Jneu}
\J_\eps(E)  = \eps^{-1}\int_\Omega |\chi_E-  \P |^2\, dx + \eps^{-1} \int_\Omega |\na \P| ^2\, dx
\end{equation}
and introducing the functions (both defined for $t\in [0,1]$)
$$f(t)=2 \min(t,1-t), \qquad F(t) = \int_0^t f(\tau)\, d\tau = \begin{cases} t^2 & \mbox{ for } 0\leq t\leq 1/2 \\ 2t-t^2 -\frac 1 2 & \mbox{ for } 1/2 \leq t\leq 1\end{cases} $$
we find (since $|\chi_E-  \P | = \P$ or $(1-\P)$):
\begin{align*}
\int_\Omega |\na F(\P)| \, dx  = \int_\Omega f(\P) |\na \P| \, dx 
& \leq \eps \int_\Omega  |\na \P|^2 \, dx + \eps^{-1}\int_\Omega \frac 1 4 f(\P)^2  \, dx \\
& \leq \eps\int_\Omega  |\na \P|^2 \, dx + \eps^{-1}\int_\Omega \min(\P,1-\P)^2  \, dx \\
& \leq \J_\eps (E).
\end{align*}
It follows that for a sequence $\{E_\eps \}$ such that $E_\eps\to E$ in $L^1$ and $\J_\eps(E_\eps) \leq C$, the sequence $F(\Pe)$ is bounded in $BV(\Omega)$.
Since it converges pointwise (and so $L^1$ strongly) to $F(\chi_E)=\frac 1 2 \chi_E$ (since $F(0)=0$ and $F(1) =1/2$), we deduce
$$
\liminf_{\eps\to 0} \J_\eps (E_\eps) \geq \int_\Omega |\na F(\chi_E)| \, dx = 
\frac 1 2 \int_\Omega |\na \chi_E| \, dx = \frac 1 2 P(E,\Omega).
$$
\medskip

\noindent{\bf Dirichlet boundary conditions.} The case of Dirichlet boundary conditions $\beta=0$ requires some adjustments to recover the whole perimeter $P(E)$:
We still have \eqref{eq:Jneu} in this case, but we can extend the function $\P$ by zero outside $\Omega$. Denoting by $\overline{\P}$ this extension, we find 
$$\J_\eps (E)\geq \int_\Omega |\na F(\P)| \, dx = \int_{\R^n} |\na F(\overline{\P})| \, dx $$
and so (as above)
$$
\liminf_{\eps\to 0} \J_\eps (E_\eps) \geq \int_{\R^n} |\na F(\chi_E)| \, dx = 
\frac 1 2 \int_{\R^n} |\na \chi_E| \, dx = \frac 1 2 P(E).
$$
\medskip

\noindent{\bf General Robin boundary conditions.} We can now assume that $\beta\neq 0$.
In that case, \eqref{eq:Jepsrb} and the boundary conditions give
$$
\J_\eps(E)=
\eps^{-1}\int_\Omega |\chi_E-  \P |^2\, dx + \eps^{-1} \int_\Omega |\na \P| ^2\, dx +\int_{\pa \Omega} \frac{\alpha}{\beta}|\P|^2   \, d\H^{n-1}(x).
$$
We combine the two cases above. Indeed, since $\frac{\alpha}{\beta}\geq0$, we can write, as in the Neumann case:
\begin{equation}\label{eq:11}
 \J_\eps (E_\eps) \geq \int_\Omega |\na F(\Pe)| \, dx.
\end{equation}
On the other hand, if we use the extension of $\P$ by $0$ (as in the Dirichlet case),
we can use the fact that
$$  \int_{\R^n} |\na F(\overline{\Pe})| \, dx = \int_\Omega |\na F(\Pe)| \, dx + \int_{\pa\Omega } F(\Pe) d\H^{n-1}
$$
to find
\begin{equation}\label{eq:22}
  \J_\eps (E_\eps)  \geq  \int_{\R^n} |\na F(\overline{\Pe})| \, dx +  \int_{\pa\Omega }G(\Pe) d\H^{n-1}
  \end{equation}
where the function 
$$G(t) = \frac \alpha \beta t^2 - F(t)= \begin{cases} (\frac \alpha \beta -1 )t^2 & \mbox{ for } 0\leq t\leq 1/2 \\ (\frac \alpha \beta +1) t^2-2t+ \frac 1 2 & \mbox{ for } 1/2 \leq t\leq 1\end{cases} $$  
satisfies
$G(t) \geq \min \{ 0, \frac \alpha{\alpha+\beta}-\frac1  2\}$ for all $t\in[0,1]$.
We now combine \eqref{eq:11} and \eqref{eq:22}: Given a smooth function $\vphi:\R^n\to [0,1]$, we write
\begin{align*}
 \J_\eps (E_\eps) 
 & \geq \int_\Omega |\na F(\Pe)| (1-\vphi)\, dx +  \int_{\R^n} |\na F(\overline{\Pe})| \vphi \, dx +  \int_{\pa\Omega }G(\Pe) \vphi d\H^{n-1}\\
 & \geq \int_\Omega |\na F(\Pe)| (1-\vphi)\, dx +  \int_{\R^n} |\na F(\overline{\Pe})| \vphi \, dx +  \int_{\pa\Omega } \min \left\{0, \frac \alpha{\alpha+\beta}-\frac1  2\right\} \vphi d\H^{n-1}
 \end{align*}
 which implies
\begin{align*}
\liminf_{\eps\to 0} \J_\eps (E_\eps) 
&\geq 
 \int_\Omega |\na F(\chi_E)| (1-\vphi)\, dx +  \int_{\R^n} |\na F(\chi_E)| \vphi \, dx +  \int_{\pa\Omega }  \min \left\{0, \frac \alpha{\alpha+\beta}-\frac1  2\right\} \vphi d\H^{n-1}\\
&\geq 
 \int_\Omega\frac 1 2 |\na \chi_E| \, dx +  \int_{\pa\Omega }\frac 1 2 \chi_E \vphi \, dx +  \int_{\pa\Omega }  \min \left\{0, \frac \alpha{\alpha+\beta}-\frac1  2\right\}\vphi d\H^{n-1}
 \end{align*}
and it only remains to take a sequence of $\vphi_n $ which converges to  $\chi_E$ in $L^1(\pa\Omega)$ to get the result.
\end{proof}

\begin{remark}\label{rem:A2}
The proof is easily adapted to divergence form elliptic operators as in \eqref{eq:022}. In the case of Dirichlet boundary conditions, for example, we have
\begin{align*}
\J_\eps(E) 
&  = \eps^{-1}\int_\Omega |\chi_E-  \P |^2\, dx + \eps^{-1} \int_\Omega (\na \P)^T  A  \na \P \, dx  \\
& = \eps^{-1}\int_\Omega |\chi_E-  \P |^2\, dx + \eps^{-1} \int_\Omega \| \na \P\|_A^2 \, dx\\
&\geq  \int_\Omega f(\P) \| F(\P)\|_A \, dx = \int_\Omega \|\na F(\P)\|_A \, dx =\int_{\R^n} \|\na F(\overline{\P})\|_A \, dx
\end{align*}
and so
$$ \liminf_{\eps\to 0} \J_\eps (E_\eps)\geq \frac 1 2 \int_{\R^n} \|\na \chi_E\|_A \, dx
=\frac 1 2 P^A(E).$$
\end{remark}

Before turning to the second part of the proof, we state (and prove) the following useful lemma:
\begin{lemma}\label{lem:vv}
Let $v^\eps$ be the solution of 
\begin{equation}\label{eq:vv}
\begin{cases}
v-\eps^2 \Delta v = 0 & \mbox{ in } \Omega\\
v=1 & \mbox{ on } \pa\Omega
\end{cases}
\end{equation}
and denote $\Omega_\delta = \{ x\in \Omega;\, {\mathrm dist} \, (x,\pa\Omega) >\delta\}$. Then there exists a constant $C$ (independent of $\eps$ and $\delta$) such that
$$ \eps^{-1} \int_{\Omega_{\delta}} v^\eps\, dx \leq C \mathcal H^{n-1}(\partial \Omega_\delta) \frac{\eps}{\delta} .$$
\end{lemma}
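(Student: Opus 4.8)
The statement says that the solution $v^\eps$ of the exterior-trace problem \eqref{eq:vv}, which is close to $1$ near $\pa\Omega$ and decays exponentially into the interior on scale $\eps$, contributes only $O(\eps/\delta)$ (after dividing by $\eps$ and integrating) in the region $\Omega_\delta$ that stays at distance $\ge\delta$ from the boundary. The natural approach is a barrier (comparison) argument: construct an explicit supersolution of \eqref{eq:vv} that dominates $v^\eps$ on $\pa\Omega$, is bounded below by $v^\eps$ by the maximum principle, and whose integral over $\Omega_\delta$ is easy to estimate.

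First I would build the barrier. A one-dimensional profile depending only on the distance to the boundary is the obvious candidate, but the distance function is not smooth enough globally, so instead I would use a function of the form $\overline v(x)=\exp\!\big(-d(x)/(C_0\eps)\big)$ where $d(x)=\dist(x,\pa\Omega)$ near $\pa\Omega$ and $C_0$ is a fixed constant chosen large enough to absorb the (bounded, since $\pa\Omega$ is $C^{1,\alpha}$) curvature terms coming from $\Delta d$; alternatively, mollify $d$ slightly. Since $|\na d|=1$ and $|\Delta d|\le C$ in a tubular neighborhood of $\pa\Omega$, a direct computation gives $\overline v-\eps^2\Delta\overline v = \overline v\big(1-\tfrac{1}{C_0^2}+\tfrac{\eps}{C_0}\Delta d\big)\ge 0$ provided $C_0$ is chosen suitably and $\eps$ is small; on $\pa\Omega$ we have $\overline v=1=v^\eps$. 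Extending $\overline v$ by a small positive constant (or by $e^{-c/\eps}$) away from the tubular neighborhood so that it remains a supersolution everywhere, the comparison principle for the operator $1-\eps^2\Delta$ (which satisfies the maximum principle) yields $0\le v^\eps\le \overline v$ in $\Omega$.

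The remaining step is the integral estimate. On $\Omega_\delta$ we have $d(x)\ge\delta$, hence $v^\eps(x)\le \overline v(x)\le e^{-d(x)/(C_0\eps)}$. I would slice $\Omega_\delta$ by the level sets of $d$ using the coarea formula:
\[
\eps^{-1}\int_{\Omega_\delta} v^\eps\,dx \le \eps^{-1}\int_\delta^{\mathrm{diam}(\Omega)} e^{-t/(C_0\eps)}\,\H^{n-1}(\{d=t\})\,dt .
\]
Since $\{d=t\}\subset\overline{\Omega_\delta}$ for $t\ge\delta$ and the $(n-1)$-dimensional measure of level sets of the distance function inside $\Omega_\delta$ is controlled by a constant times $\H^{n-1}(\pa\Omega_\delta)$ (a standard fact for sets with $C^{1,\alpha}$ boundary, using that the nearest-point projection onto $\pa\Omega_\delta$ is Lipschitz), we bound the right-hand side by $C\,\H^{n-1}(\pa\Omega_\delta)\,\eps^{-1}\int_\delta^\infty e^{-t/(C_0\eps)}\,dt = C C_0\,\H^{n-1}(\pa\Omega_\delta)\,e^{-\delta/(C_0\eps)}$. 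Finally $e^{-\delta/(C_0\eps)}\le C_0\eps/\delta$ since $e^{-\tau}\le 1/\tau$ for $\tau>0$, which gives exactly the claimed bound $C\,\H^{n-1}(\pa\Omega_\delta)\,\eps/\delta$.

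The main obstacle is the geometric bookkeeping in constructing a valid global supersolution: the distance function is only Lipschitz, not $C^2$, away from the boundary, and $\Delta d$ develops singularities on the cut locus, so one must either work only in a fixed tubular neighborhood of $\pa\Omega$ (where $C^{1,\alpha}$ regularity of $\pa\Omega$ guarantees $d\in C^{1,\alpha}$ and $\Delta d$ bounded in the distributional/a.e.\ sense) and glue in a harmless positive lower bound elsewhere, or replace $d$ by a smooth regularization with $|\na d_{\mathrm{reg}}|\ge 1/2$ and $|\Delta d_{\mathrm{reg}}|\le C$. Once the barrier is in place the rest is routine. (One could also avoid barriers entirely by testing \eqref{eq:vv} against a cutoff and using the exponential decay estimate $\|v^\eps\|_{L^2(\Omega_\delta)}\le Ce^{-\delta/(C\eps)}$ obtained by an Agmon-type energy argument, then applying Cauchy–Schwarz; this trades the geometric construction for a slightly less sharp but still sufficient bound.)
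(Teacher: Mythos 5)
Your approach (an explicit exponential barrier followed by a coarea/slicing argument) is genuinely different from the paper's, and it has two gaps, one of which is serious. The minor one: the supersolution $\overline v=e^{-d(x)/(C_0\eps)}$ requires $\Delta d$ to be bounded in a tubular neighborhood of $\pa\Omega$, which holds for $C^{1,1}$ boundaries but \emph{not} in general for $C^{1,\alpha}$ ones (and the lemma as stated makes no regularity assumption on $\pa\Omega$ at all); your fallback of mollifying $d$ does not obviously preserve both $|\na d_{\mathrm{reg}}|\geq 1/2$ and $|\Delta d_{\mathrm{reg}}|\leq C$ at the relevant scale. This is fixable: comparing $v^\eps$ on each inscribed ball $B_{d(x_0)}(x_0)$ with the radial solution of $w-\eps^2\Delta w=0$, $w=1$ on the sphere, gives $v^\eps(x_0)\leq Ce^{-c\,d(x_0)/\eps}$ with no boundary regularity. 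The serious gap is the step $\H^{n-1}(\{d=t\})\leq C\,\H^{n-1}(\pa\Omega_\delta)$ for all $t\geq\delta$. This is not a standard fact, and the justification you offer — that the nearest-point projection onto $\pa\Omega_\delta$ is Lipschitz — is false: $\Omega_\delta$ is in general neither convex nor smooth, and nearest-point projections onto non-convex sets are not Lipschitz. Indeed the measures of the inner level sets $\{d=t\}$ need not be controlled by $\H^{n-1}(\pa\Omega_\delta)$ with a universal constant (even smooth domains can have $\H^{n-1}(\{d=t\})\gg\H^{n-1}(\{d=\delta\})$ for some $t>\delta$, e.g.\ a ball with many small smooth holes). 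Without this comparison your slicing only yields a bound of the form $C|\Omega_\delta|\,\eps/\delta^2$ (bounding $e^{-\tau}$ by $1/\tau^2$), not the stated $C\,\H^{n-1}(\pa\Omega_\delta)\,\eps/\delta$.

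The paper avoids both issues with a shorter argument that never uses the pointwise exponential decay: since $v^\eps=\eps^2\Delta v^\eps$, the divergence theorem gives
$$\eps^{-1}\int_{\Omega_\delta}v^\eps\,dx=\eps\int_{\pa\Omega_\delta}\na v^\eps\cdot n\,d\H^{n-1},$$
and for $x_0\in\pa\Omega_\delta$ the rescaled function $\bar v^\eps(x)=v^\eps(x_0+\eps x)$ solves $\bar v^\eps-\Delta\bar v^\eps=0$ in $B_{\delta/\eps}(0)$ with $0\leq\bar v^\eps\leq1$, so interior gradient estimates give $|\na v^\eps(x_0)|\leq C/\delta$; the factor $\H^{n-1}(\pa\Omega_\delta)$ then appears directly from the surface integral. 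If you want to keep your route, you must either prove the level-set comparison for the domains at hand or restructure the integration step; as written, the key geometric inequality is unsupported.
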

\begin{proof}
We write
$$ \eps^{-1} \int_{\Omega_{\delta}} v^\eps\, dx = \eps  \int_{\Omega_{\delta}} \Delta v^\eps\, dx = - \eps  \int_{\pa \Omega_{\delta}}\na  v^\eps\cdot n \, dx .$$
Given $x_0\in \Omega$, denote $\delta = d(x_0,\pa\Omega)$. The function $\bar v^\eps(x) = v^\eps(x_0+\eps x)$ solves
$$
\bar v^\eps - \Delta \bar v^\eps = 0 \mbox{ in } B_{\delta/\eps}(0)
$$
and satisfies $0\leq \bar v^\eps\leq 1$.
Standard elliptic estimates give
$$\eps|\na v^\eps(x_0) |=| \na \bar v^\eps(0)| \leq C \frac{\eps}{\delta}.$$
The result follows.
\end{proof}

\begin{proof}[Proof of Proposition \ref{prop:Gamma} - Part 2: The $\limsup$ condition]
Before giving the construction of $E_\eps$ in the general case, we start with the simpler cases where $\frac{\alpha}{\alpha+\beta} - \frac 1 2$ does not change sign on $\pa\Omega$.

\noindent{\bf Case 1:} When $\frac{\alpha}{\alpha+\beta} \leq  \frac 1 2 $ for all $x\in \pa\Omega$. In that case  we can simply take $E_\eps = E$ since Proposition \ref{prop:4} gives
$$ \lim_{\eps\to0} \J_\eps(E) = \F_{\frac{\alpha}{\alpha+\beta}}(E).$$

\noindent{\bf Case 2:} When $\frac{\alpha}{\alpha+\beta} >  \frac 1 2 $ for all $x\in \pa\Omega$. In that case, we have $\F_{\frac{\alpha}{\alpha+\beta}}(E) = \frac 1 2 P(E)$, and we need to approach $E$ by a sequence of sets $E_\eps$ which does not feel the effect of the boundary conditions on $\pa\Omega$:
For a given $\gamma \in(0,1)$, we define
$$ E_\eps = E\cap \Omega_{\eps^\gamma}$$
(recall that $\Omega_{\eps^\gamma} = \{ x\in\Omega\, ;\, \mathrm{dist} (x,\pa\Omega) >\eps^\gamma\}$) and we claim that 
\begin{equation}\label{eq:JEeps}
\J_\eps(E_\eps) \leq \frac 1 2 P(E_\eps) + o(1),
\end{equation}
which implies (see Giusti \cite{giusti}):
\begin{align}
\limsup_{\eps\to 0} \J_\eps(E_\eps) \leq 
\limsup_{\eps\to 0} \frac 1 2 P(E_\eps)& = \limsup_{\eps\to 0}\left[ \frac 1 2 P(E,\Omega_{\eps^\gamma}) +  \frac 1 2  \int_{\pa  \Omega_{\eps^\gamma}} \chi_E d \H^{n-1}  \right]\nonumber \\
& =\frac 1 2 P(E,\Omega) +  \frac 1 2 \int_{\pa  \Omega} \chi_E d \H^{n-1}   =\frac 1 2 P(E).\label{eq:hjgyiuy}
\end{align}
\medskip
To prove \eqref{eq:JEeps}, we write
$  \phi^\eps_{E_\eps} = v_1^\eps(x) +v_2^\eps(x)$ where $v_1^\eps = K_\eps \star \chi_{E_\eps}$ (recall that $K_\eps$ is the fundamental solution of our equation, solving $K_\eps -\eps^2 \Delta K_\eps = \delta$ in $\R^n$).
We then write
\begin{align}
\J_\eps(E_\eps)
 & = \eps^{-1} \int_\Omega  \chi_{E_\eps} (1- \phi^\eps_{E_\eps})\, dx \nonumber \\
 & = \eps^{-1} \int_\Omega  \chi_{E_\eps} (1- v_1^\eps)\, dx - \eps^{-1} \int_\Omega  \chi_{E_\eps} v_2^\eps\, dx\nonumber  \\
&= - \eps \int_{\pa^* {E_\eps} } \na v_1^\eps \cdot \nu_{E_\eps} (x)\, d\H^{n-1}(x) - \eps^{-1} \int_\Omega  \chi_{E_\eps} v_2^\eps\, dx . \label{eq:Jev1}
\end{align}
To prove that the second term goes to zero as $\eps\to0$, we note that $|v_2^\eps|\leq v^\eps $ solution of 
\eqref{eq:vv}.
Indeed, since $v_1^\eps$ solves $v_1^\eps -\eps^2 \Delta v^\eps_1 = \chi_{E_\eps}$ in $\R^n$, the maximum principle implies that $0\leq v_1^\eps\leq 1$, and so $|v_2^\eps(x)|\leq |\phi^\eps_{E_\eps} (x)-v_1(x)|\leq 1$ in $\Omega$. Furthermore the definition of $v_1^\eps$ implies that  $v_2^\eps$ solves $v_2^\eps-\eps^2 \Delta v_2^\eps = \chi_{E_\eps} -\chi_{E_\eps} =0$.
Lemma \ref{lem:vv} thus implies
$$
\left|\eps^{-1} \int_\Omega  \chi_{E_\eps} v_2^\eps\, dx\right| 
\leq 
\left|\eps^{-1} \int_{\Omega_{\eps^\gamma}} v^\eps\, dx\right| \leq C\eps^{1-\gamma}\H^{n-1}(\partial \Omega_{\eps^{\gamma}})\leq C\eps^{1-\gamma}.$$
It remains to show that 
\begin{equation}\label{eq:grad}
\sup_{\R^n}  |\eps  \na v_1^\eps (x)|\leq \frac1 2 
\end{equation} 
so that \eqref{eq:Jev1} implies
$$ \J_\eps(E_\eps) \leq \frac 1 2  \H^{n-1} (\pa^* E_\eps) + C\eps^{1-\gamma}$$
which gives \eqref{eq:JEeps}.

After a rescaling, inequality \eqref{eq:grad} follows from the following lemma:
\begin{lemma}\label{lem:half}
Given a function $f(x)$ such that $0\leq f\leq 1$,   the function  $w= K\star f$ satisfies
$$
  | \na K\star f (x) |\leq \frac1 2 \quad \mbox{ for all } x\in \R^n
$$
\end{lemma}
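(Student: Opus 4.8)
The plan is to reduce the $n$-dimensional bound to a one-dimensional identity. Since $K$ is radially symmetric it suffices, by rotating coordinates (which leaves $K$ invariant and keeps $f$ valued in $[0,1]$), to prove $|\partial_1(K\star f)(x)|\le \tfrac12$ for every $x$. Write $w=K\star f$, so that $\partial_1 w=(\partial_1 K)\star f$ is a bounded continuous function ($\partial_1 K\in L^1(\R^n)$, $f\in L^\infty$). The structural fact I would exploit is that the kernel $K$ of $(1-\Delta)^{-1}$ is positive and \emph{radially non-increasing}: this is transparent from the subordination formula
\[
K(x)=\int_0^\infty (4\pi t)^{-n/2}\,e^{-t}\,e^{-|x|^2/(4t)}\,dt ,
\]
each factor $e^{-|x|^2/(4t)}$ being non-increasing in $|x|$ (and $\int_{\R^n}K=1$ follows as well). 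Hence $\partial_1 K(z)=k'(|z|)\,z_1/|z|$ has the sign of $-z_1$, so $\partial_1 K\ge 0$ on $\{z_1<0\}$ and $\partial_1 K\le 0$ on $\{z_1>0\}$.

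With this, since $0\le f\le 1$ I would estimate, after substituting $z=x-y$,
\[
\partial_1 w(x)=\int_{\R^n}\partial_1 K(z)\,f(x-z)\,dz\ \le\ \int_{\{z_1<0\}}\partial_1 K(z)\,dz,\qquad
\partial_1 w(x)\ \ge\ \int_{\{z_1>0\}}\partial_1 K(z)\,dz,
\]
because on $\{z_1<0\}$ the integrand lies between $0$ and $\partial_1 K(z)$, and symmetrically on $\{z_1>0\}$. By Fubini and the fundamental theorem of calculus in $z_1$ (using $K(z_1,z')\to 0$ as $z_1\to\pm\infty$),
\[
\int_{\{z_1<0\}}\partial_1 K(z)\,dz=\int_{\R^{n-1}}K(0,z')\,dz'=:h(0),\qquad
\int_{\{z_1>0\}}\partial_1 K(z)\,dz=-\,h(0),
\]
where $h(z_1):=\int_{\R^{n-1}}K(z_1,z')\,dz'$ is the first-coordinate marginal of $K$. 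Therefore $|\partial_1 w(x)|\le h(0)$ for all $x$.

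It then remains to compute $h(0)$, which is the crux of the argument. Integrating the equation $K-\Delta K=\delta$ over the variables $z'\in\R^{n-1}$ and using $\int_{\R^{n-1}}\Delta_{z'}K(z_1,z')\,dz'=0$, one finds that $h$ solves $h-h''=\delta$ on $\R$ in the distributional sense, so $h(z_1)=\tfrac12 e^{-|z_1|}$ and $h(0)=\tfrac12$. Combining the steps gives $|\nabla(K\star f)(x)|=\sup_{|e|=1}|\partial_e(K\star f)(x)|\le\tfrac12$ for all $x$, as claimed. The two points needing a little care are the radial monotonicity of $K$ (dispatched by the subordination formula above) and the integration by parts identifying $h$ together with the decay $K(z_1,z')\to 0$; both follow routinely from the exponential decay of $K$ and $\nabla K$ away from the origin (Lemma \ref{lem:K}) and from $K,\nabla K\in L^1(\R^n)$.
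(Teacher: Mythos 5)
Your proof is correct and follows essentially the same strategy as the paper's: reduce to a single direction, use the sign of the directional derivative of the radial kernel on the two half-spaces to sandwich $\partial_1 w$ between the two half-space integrals of $\partial_1 K$, and identify that integral as $\tfrac12$. The only (cosmetic) differences are that you justify the radial monotonicity of $K$ via the subordination formula, and you compute the constant through the one-dimensional marginal $h-h''=\delta$, whereas the paper obtains the same number from the explicit solution of $\omega-\Delta\omega=\chi_{\{x_n<0\}}$; these two computations are equivalent since $-\partial_{x_n}\omega(0)=h(0)$.
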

\begin{proof}
Without loss of generality, we can fix $x=0$ and assume that $\na w(0) = \pa_{x_n} w(0) e_n$.
We then have $  \pa_{x_n} w(0) = - \int_{\R^n} \pa_{x_n} K (y) f(y) \, dy$
and we note that $ \pa_{x_n} K\geq 0$ for $x_n<0$ and $ \pa_{x_n} K\leq 0$ for $x_n<0$. Since $0\leq f\leq 1$, we deduce
$$  \int_{\{y_n>0\} } \pa_{x_n} K (y)  \, dy \leq -\pa_{x_n} w(0) \leq  \int_{\{y_n<0\} } \pa_{x_n} K (y)  \, dy.$$
The function $\omega (x) = \int_{\{y_n<0\} }  K (x-y)  \, dy$ is the unique bounded solution of $\omega - \Delta \omega =  \chi_{\{x_n<0\}}$ which is given by $\omega(x) = \vphi(x_n) $ with
$$
\vphi(x) = \begin{cases}
1-\frac 1 2 e^{x} & \mbox{ for } x<0\\
\frac 1 2 e^{-x} & \mbox{ for } x>0\
\end{cases}$$
and satisfies 
$$ -\pa_{x_n} \omega(0) = 1/2=  \int_{\{y_n<0\} } \pa_{x_n} K (y)  \, dy .$$
\end{proof}

\medskip

\noindent{\bf General case:} Finally, we consider the general case, when $ \frac{\alpha}{\alpha+\beta} -\frac1 2 $ can change sign.
We then define $\Gamma_1=\left\{x\in \partial \Omega, \frac{\alpha}{\alpha+\beta}\leq \frac{1}{2}\right\}$ and $\Gamma_2=\left\{x\in \partial \Omega, \frac{\alpha}{\alpha+\beta}> \frac{1}{2}\right\}$.
We recall (see \eqref{eq:n2}) that $\H^{n-2}(\partial \Gamma_2)<\infty$.
For a given $\gamma\in(0,1)$, we define
$$ \Lambda_\eps =  \{x\in \Omega, \mathrm{dist}(x,\Gamma_2)>\eps^{\gamma}\}$$
and set
\[
E_{\eps}=E \cap  \Lambda_\eps.
\]
We claim that
\begin{equation}\label{eq:limsuprobin}
\J_{\eps}(E_{\eps})\leq \frac{1}{2}P(E_{\eps})+\int_{\pa \Omega}{\min\left\{0,\frac{\alpha}{\alpha+\beta}-\frac{1}{2}\right\}\chi_{E_\eps}} \, d\H^{n-1} +o(1) \qquad\mbox{ as $\eps\to 0$}.
\end{equation}
This inequality implies the result because we can proceed as in \eqref{eq:hjgyiuy} to show that 
$$ \limsup_{\eps\to 0} \frac{1}{2}P(E_{\eps}) = \frac 1 2 P(E,\Omega) +  \frac 1 2 \int_{\pa  \Omega} \chi_E d \H^{n-1}   $$
and 
$$
\left|\int_{\pa \Omega}\min\left\{0,\frac{\alpha}{\alpha+\beta}-\frac{1}{2}\right\}[\chi_{E_\eps} -\chi_E]\, d\H^{n-1}
\right| 
\leq \frac 1 2 \int_{\Gamma_1} |\chi_{E_\eps} -\chi_E|\, d\H^{n-1}
\leq \frac 1 2 \H^{n-1}(\{x\in\Gamma_1,dist(x,\Gamma_2)\leq\eps^{\gamma}\}).
$$
Using \eqref{eq:n2}, we have $\H^{n-1}(\{x\in\Gamma_1,dist(x,\Gamma_2)\leq \eps^{\gamma}\})=o(1)$, and thus
$$\limsup_{\eps\to 0} \J_{\eps}(E_{\eps})\leq  \frac 1 2 P(E,\Omega) +  \frac 1 2 \int_{\pa  \Omega} \chi_E d \H^{n-1} 
+\int_{\pa \Omega}{\min\left\{0,\frac{\alpha}{\alpha+\beta}-\frac{1}{2}\right\}\chi_{E}} \, d\H^{n-1} 
=\F_{\frac{\alpha}{\alpha+\beta}}(E).
$$
\medskip

To prove \eqref{eq:limsuprobin}, we proceed as above, writing $  \phi^\eps_{E_\eps} = v_1^\eps(x) +v_2^\eps(x)$ where $v_1 = K_\eps \star \chi_{E_\eps}$ and
\begin{align}
\J_\eps(E_\eps)
&= - \eps \int_{\pa^* {E_\eps} } \na v_1^\eps \cdot \nu_{E_\eps} (x)\, d\H^{n-1}(x) - \eps \int_{\pa^* {E_\eps} } \na v_2^\eps \cdot \nu_{E_\eps} (x)\, d\H^{n-1}(x) . \label{eq:Jev13}
\end{align}
By \eqref{eq:grad}, we have 
\[
- \eps \int_{\pa^* {E_\eps} } \na v_1^\eps \cdot \nu_{E_\eps} (x)\, d\H^{n-1}(x)\leq \frac 1 2  \H^{n-1} (\pa^* E_\eps)=\frac{1}{2}P(E_{\eps}) 
\]
which give the first term in \eqref{eq:limsuprobin}.

Since $|v_2^{\eps}|\leq v^{\eps}\leq 1$ in $\Omega$ and $v_2^{\eps}$ solves $v_2^{\eps}-\eps^2\Delta v_2^{\eps}=0$ in $\Omega$, we get (using a similar argument in Lemma \ref{lem:vv}):
\[
\eps|\nabla v_2^{\eps}(x)|\leq C\eps^{1-\gamma} \quad \mbox{ for $x\in \Omega_{\frac 1 2\eps^{\gamma}}$}.
\]
Thus the second term in \eqref{eq:Jev13} gives
\begin{align}
- \eps \int_{\pa^* {E_\eps} } \na v_2^\eps \cdot \nu_{E_\eps} (x)\, d\H^{n-1}(x)
& =- \eps \int_{\pa^* {E_\eps \setminus \Omega_{\frac 1 2\eps^{\gamma}}}} \na v_2^\eps \cdot \nu_{E_\eps} (x)\, d\H^{n-1}(x)+O(\eps^{1-\gamma})\nonumber \\
& =- \eps \int_{\pa^* {E_\eps \cap \pa \Omega}} \na v_2^\eps \cdot \nu_{E_\eps} (x)\, d\H^{n-1}(x) \nonumber \\
& \quad 
- \eps \int_{\pa^* {E_\eps \cap (\Omega \setminus \Omega_{\frac 1 2\eps^{\gamma}}) }} \na v_2^\eps \cdot \nu_{E_\eps} (x)\, d\H^{n-1}(x)+O(\eps^{1-\gamma}). \label{eq:ghteu}
\end{align}
The second term goes to zero since $\eps|\nabla v_2^{\eps}|\leq C $
and 
$$ \H^{n-1}(\pa^* {E_\eps \cap (\Omega \setminus \Omega_{\frac 1 2\eps^{\gamma}}) }
\leq P(E, \Omega \setminus \Omega_{\frac 1 2\eps^{\gamma}}) + \H^{n-1}(\pa \Lambda_\eps\cap (\Omega \setminus \Omega_{\frac 1 2\eps^{\gamma}})) \to 0.$$
Here we use \eqref{eq:n2} to show $\H^{n-1}(\pa \Lambda_\eps\cap (\Omega \setminus \Omega_{\frac 1 2\eps^{\gamma}}))\to 0$ as $\eps \to 0$.
Finally, to pass to the limit in the first term in \eqref{eq:ghteu}, we note that for $x_0\in \partial^*E_{\eps}\cap \partial \Omega \subset \Gamma_1\cap \partial^* E$, we have (using Lemma \ref{lem:3} and the definition of $v_1^{\eps}$):
\[
-\eps \nabla v_2^{\eps}(x_0)=-\eps \nabla \phi_{E_{\eps}}^{\eps}(x_0)+\eps \nabla v_1^{\eps}(x_0)\to \left(\frac{\alpha}{\alpha+\beta}-\frac 1 2\right)n(x_0).
\]
We deduce:
\begin{align*}
{\J_{\eps}(E_{\eps})}
& \leq \frac{1}{2}P(E_{\eps})+\int_{\partial^*E_{\eps}\cap \partial \Omega}\left(\frac{\alpha}{\alpha+\beta}-\frac 1 2\right) d\H^{n-1}(x)+o(1) \\
& =\frac{1}{2}P(E_{\eps})+ \int_{\Gamma_1}\left( \frac{\alpha}{\alpha+\beta}-\frac{1}{2} \right) \chi_{E_\eps} d\H^{n-1}(x) +o(1)\\
& =\frac{1}{2}P(E_{\eps}) +\int_{\partial \Omega} \min\left\{ 0,\frac{\alpha}{\alpha+\beta}-\frac{1}{2}\right\}\chi_{E_\eps}  d\H^{n-1}(x) +o(1)
\end{align*}
which is \eqref{eq:limsuprobin}.


\end{proof}


\section{The case $s\in[1,2)$ with Dirichlet or Neumann conditions}

We divide the proof in two parts depending on whether $s\in(1,2)$ or $s=1$ (the scaling of $\J_\eps^s$ makes it clear that the critical case $s=1$ is different).

\begin{proof}[Proof of Proposition \ref{prop:12} - case $s\in(1,2)$]
We write the fractional Laplacian as a divergence operator:
$$
(-\Delta)^{s/2} u = - \div D^{s-1}[u], \qquad D^{s-1}[u] = \frac{c_{n,s}}{s}\PV\int_{\R^n} u(y)  \frac{y-x}{|y-x|^{n+s}}\, dy
$$
where $D^{s-1}[u]$ is a fractional gradient of order $s-1\in(0,1)$.
\medskip

The proof is then similar to the case $s=2$: We first write
\begin{align*}
\J^s_\eps(E)& = -\eps^{s-1} \int_E \div D^{s-1}\P \, dx  \\
& = - \eps^{s-1}  \int_{\pa^* E  } D^{s-1}[ \P] \cdot \nu_E(x)\, d\H^{n-1}(x) 
\end{align*}
and we identify the limit of $\eps^{s-1} D^{s-1}[ \phi_E^\eps] \cdot \nu_E(x)$ by a blow-up argument:
Given $x_0\in \pa^* E$ we define $ w^\eps (x) = \P(x_0+\eps x)$, which solves
$$ 
\begin{cases}
 w^\eps +(-\Delta)^{s/2}  w^\eps = \chi_{ E}(x_0+\eps x) & \mbox{ in } \Omega_\eps\\
w^\eps  =0  & \mbox{ in } \C \Omega_\eps.
\end{cases}
$$
Since $ w^ \eps$, $(-\Delta)^{s/2} w^\eps$ are bounded in $L^\infty(\Omega_\eps)$ and $s>1$, 
the regularity theory for fractional elliptic equations (for instance \cite{ros-oton-Serra})  implies that $w^\eps$ is bounded in $C^{s/2}_{loc} (\R^n)$. Since $s-1<s/2$ it follows that $D^{s-1}[ w^\eps]$ is well defined and continuous in $\R^n$. 
We can then conclude as we did in the case $s=2$ thanks to the following lemma:
\begin{lemma}\label{lem:dfg}
There exist a constant $C$ such that $ |\eps^{s-1} D^{s-1} [\P] (x_0)|= |D^{s-1}[ w^\eps] (0)|\leq C$ for all $x_0 \in \pa^*E$.
Furthermore, there exists some constant $\sigma^1_{n,s}$ and $ \sigma^2_{n,s}$ (depending only on $n$ and $s$) such that
\item If $x_0\in \pa^* E\cap \Omega$, then 
$$ 
\eps^{s-1} D^{s-1}[ \P](x_0) = D^{s-1}[ w^\eps](0)\to D^{s-1}[ w_0] (0) =   \sigma^1_{n,s} \nu_E(x_0).
$$
\item If $x_0\in \pa^* E\cap \pa\Omega$, then we have 
$$ 
\eps^{s-1} D^{s-1}[ \P](x_0)=D^{s-1}[ w^\eps](0) \to D^{s-1}[w_0] (0) = \sigma^2_{n,s} n(x_0) \qquad \mbox{ if } \beta\equiv 0\quad  \mbox{(Dirichlet)}
$$
and 
$$ 
\eps^{s-1} D^{s-1}[ \P](x_0)=D^{s-1}[ w^\eps](0) \to  0\qquad   \mbox{ if } \alpha\equiv 0\quad  \mbox{(Neumann)}.
$$

\end{lemma}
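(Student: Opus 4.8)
The plan is to mimic the proof of Lemma~\ref{lem:3} from the local case: a uniform estimate from fractional elliptic regularity, a blow-up/compactness argument, and then an explicit computation of the limiting fractional gradient.

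First I would establish the uniform bound. Using that $y\mapsto y/|y|^{n+s}$ is odd, one can write, for any $x_0$,
\[
D^{s-1}[w^\eps](0)=\frac{c_{n,s}}{s}\int_{B_1}\bigl(w^\eps(y)-w^\eps(0)\bigr)\frac{y}{|y|^{n+s}}\,dy+\frac{c_{n,s}}{s}\int_{\R^n\setminus B_1}w^\eps(y)\frac{y}{|y|^{n+s}}\,dy.
\]
Since $0\le w^\eps\le 1$ and $s>1$, the kernel $|y|^{-(n+s-1)}$ is integrable at infinity, so the second integral is bounded uniformly in $\eps$ and $x_0$. For the first one, since $w^\eps$ and $(-\Delta)^{s/2}w^\eps=\chi_E(x_0+\eps\,\cdot)-w^\eps$ are bounded in $L^\infty$, the interior regularity of \cite{ros-oton-Serra} (resp.\ the corresponding boundary regularity when $x_0\in\pa\Omega$: from \cite{ros-oton-Serra} in the Dirichlet case, and in the spirit of \cite{DRV} for the nonlocal Neumann condition) provides an $\eps$-uniform bound for $w^\eps$ in $C^{s/2}$ on $B_1$ (resp.\ on $\overline{\Omega_\eps}\cap B_1$); since $s/2>s-1$, the integrand is then controlled by $C|y|^{1-s/2-n}$, which is integrable on $B_1$. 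This gives $|D^{s-1}[w^\eps](0)|\le C$.

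Next I would run the blow-up argument. These same $C^{s/2}$ bounds, together with $\chi_E(x_0+\eps\,\cdot)\to\chi_{\{x\cdot\nu_E(x_0)<0\}}$ in $L^1_{\loc}$ (which uses $x_0\in\pa^*E$, and, when $x_0\in\pa\Omega$, the identity $\nu_E(x_0)=n(x_0)$ proved exactly as in the case $s=2$ from the description of $\pa^*E$ in \cite{giusti}), give a subsequence along which $w^\eps\to w_0$ locally uniformly, where $w_0$ is the unique bounded solution of: $w_0+(-\Delta)^{s/2}w_0=\chi_{\{x\cdot\nu_E(x_0)<0\}}$ in $\R^n$ if $x_0\in\Omega$; $w_0+(-\Delta)^{s/2}w_0=1$ in $\{x\cdot n(x_0)<0\}$ with $w_0=0$ in the complement in the Dirichlet case; or the same equation with $\N(w_0)=0$ in $\{x\cdot n(x_0)>0\}$ in the Neumann case. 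Splitting $D^{s-1}[\cdot](0)$ as above, dominating the near part by the uniform $C^{s/2}$ bound and the far part by $0\le w^\eps\le1$, dominated convergence yields $D^{s-1}[w^\eps](0)\to D^{s-1}[w_0](0)$.

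Finally I would identify $D^{s-1}[w_0](0)$. In the interior case, uniqueness and invariance under rotations fixing $\nu_E(x_0)$ force $w_0(x)=\vphi(x\cdot\nu_E(x_0))$ for a monotone one-dimensional profile $\vphi$; by rotational symmetry $D^{s-1}[w_0](0)$ is parallel to $\nu_E(x_0)$, with coefficient a dimensional constant (its orientation fixed by the monotonicity of $\vphi$) that we call $\sigma^1_{n,s}$. The Dirichlet case is identical and gives $D^{s-1}[w_0](0)=\sigma^2_{n,s}\,n(x_0)$. In the Neumann case the constant function $w_0\equiv1$ solves the limiting problem ($(-\Delta)^{s/2}1=0$ on the half-space, and $\N(1)=0$), hence by uniqueness $w_0\equiv1$ and $D^{s-1}[w_0](0)=\frac{c_{n,s}}{s}\PV\int_{\R^n}\frac{y}{|y|^{n+s}}\,dy=0$. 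The main obstacle is the $\eps$-uniform boundary regularity used in the second step, especially for the nonlocal Neumann problem on the dilated domains $\Omega_\eps$, where one cannot simply quote a standard reference and must adapt the barrier and regularity arguments; verifying $\nu_E=n$ on $\pa^*E\cap\pa\Omega$ is then routine given \cite{giusti}.
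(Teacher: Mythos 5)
Your proposal is correct and follows essentially the same route as the paper: blow-up of $\P$ at $x_0$, an $\eps$-uniform $C^{s/2}$ (resp. $C^\beta$, $\beta<s$) bound from fractional elliptic regularity combined with $s-1<s/2$ to pass to the limit in $D^{s-1}$, and identification of the limit via the one-dimensional profile (with $w_0\equiv 1$ in the Neumann case). Your splitting of $D^{s-1}[w^\eps](0)$ into near and far parts makes the uniform bound more explicit than the paper's, and the boundary-regularity caveat you flag for the nonlocal Neumann problem on $\Omega_\eps$ is a genuine point that the paper itself does not address in detail.
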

\end{proof}

\begin{proof}[Proof of Lemma \ref{lem:dfg}]
The proof uses the same idea as Lemma \ref{lem:3}.

If $x_0\in \Omega$ then $\chi_{ E}(x_0+\eps x) \to \chi_{\{x\cdot \nu_E(x_0)<0\}} $ in $L^1_{loc}$  and 
for all $R>0$, $B_R(0)\subset \Omega_\eps$ for $\eps$ small enough 
Since  $w^\eps$ and $(-\Delta)^{s/2}  w^\eps$ are bounded in $L^\infty(B_R)$, it follows that $w^\eps$ is bounded in $C^\beta(B_{R/2})$  for all $\beta<s$.
In  particular, $w^\eps$ and $D^{s-1}[ w^\eps]$
converge locally uniformly  to $w_0$ and $D^{s-1}[ w_0]$ unique bounded solution of 
$$ 
w_0 +(-\Delta)^{s/2} w_0 = \chi_{\{x\cdot \nu_E(x_0)<0\}} \mbox{ in } \R^n. 
$$
This solution is of the form $w_0(x) = \vphi(x\cdot \nu_E(x_0))$
and so $D^{s-1}[ w_0]  = D^{s-1}[ \vphi](0) \nu_E(x_0)$.
The result follows with $\sigma^1_{n,s}:=D^{s-1}[ \vphi](0)$.

\medskip

If $x_0\in\pa \Omega$, and $\beta\equiv 0$, then $\nu_E(x_0)=n(x_0)$ and $\chi_{E}(x_0+\eps x) \to \chi_{\{x\cdot n(x_0)<0\}}        $ in $L^1_{loc}$
and regularity theory for fractional elliptic equations (\cite{ros-oton-Serra}) implies that 
 $w^\eps$ is bounded in $C^{s/2}_{loc} (\R^n)$.
Since $s-1<s/2$ it follows that $w^\eps$  (resp. $D^{s-1}[ w^\eps]$)
converge locally uniformly  to $w_0$ (resp. $D^{s-1}[ w_0]$)  unique bounded solution of 
$$ 
\begin{cases}
w_0 + (-\Delta )^{s/2} w_0 =1 & \mbox{ in } \{x\cdot n(x_0) <0\}\\
  w_0  = 0& \mbox{ in }  \{x\cdot n(x_0) >0\}
\end{cases}
$$
This solution is of the form  $w_0(x) = \vphi_1(x\cdot n(x_0))$ for some $\vphi_1:\R\to[0,1]$
and so 
$D^{s-1}[ w_0]  = D^{s-1}[ \vphi_1](0) n(x_0)$. The result follows with $\sigma^2_{n,s}:=D^{s-1}[ \vphi_1](0)$.

When $\alpha\equiv0$ (Neumann boundary conditions), we note that the solution of 
$$ 
\begin{cases}
w_0 + (-\Delta )^{s/2} w_0 =1 & \mbox{ in } \{x\cdot n(x_0) <0\}\\
  \widetilde {\mathcal N }(w_0 ) = 0& \mbox{ in }  \{x\cdot n(x_0) >0\}
\end{cases}
$$
is $w_0\equiv1$ which satisfies $D^{s-1}[ w_0] =0$.


 \end{proof}

\begin{proof}[Proof of Proposition \ref{prop:12} - case $s=1$]
When $s=1$, we write the fractional Laplacian as a divergence operator:
$$
(-\Delta)^{1/2} u = - \div H [u], \qquad H[u] = c_{n,1}\PV\int_{\R^n} u(y) \frac{x-y}{|x-y|^{n+1}}\, dy
$$
where $H[u]$ is an operator of order $0$ (in dimension $1$, $H$ is the Hilbert transform).
We then have
\begin{align*}
\J_\eps(E) & =  (\eps|\ln \eps|) ^{-1}\int_\Omega \chi_E (1-\P)\, dx\\
& = - (|\ln \eps|) ^{-1}\int_E (-\Delta)^{1/2} \P\, dx \\
& = (|\ln \eps|) ^{-1}\int_{\pa^* E} H[ \P] \cdot \nu_E\, d\H^{n-1}(x) 
\end{align*}
and we proceed as above:
Given $x_0\in \pa^* E$, we define $ w^\eps (x) = \P(x_0+\eps x)$, which solves
$$ 
\begin{cases}
 w^\eps +(-\Delta)^{1/2}  w^\eps = \chi_{ E}(x_0+\eps x) & \mbox{ in } \Omega_\eps\\
w^\eps  =0  & \mbox{ in } \C \Omega_\eps
\end{cases}
$$
and we conclude once again thanks to the following lemma:

\begin{lemma}\label{lem:ghj}
There exist a constant $C$ such that $ | |\ln \eps| ^{-1}  H [\P] (x_0)|= ||\ln \eps| ^{-1}  H[ w^\eps] (0)|\leq C$ for all $x_0 \in \pa^*E$.
Furthermore,
\item If $x_0\in \pa^* E\cap \Omega$, then 
$$ 
|\ln \eps| ^{-1}  H [\P] (x_0) = |\ln \eps| ^{-1}  H[\bar w^\eps] (0) \to   \sigma^1_{n,1} \nu_E(x_0).
$$
\item If $x_0\in \pa^* E\cap \pa\Omega$, then we have
$$ 
|\ln \eps| ^{-1}  H [\P] (x_0)= |\ln \eps| ^{-1} H [\bar w^\eps] (0) \to \sigma^1_{n,1} n(x_0) \qquad \mbox{ if } \beta\equiv 0 \quad  \mbox{(Dirichlet)}
$$
and 
$$ 
|\ln \eps| ^{-1}  H [\P] (x_0)= |\ln \eps| ^{-1} H [\bar w^\eps] (0) \to 0 \qquad \mbox{ if } \alpha\equiv 0 \quad  \mbox{(Neumann)}.
$$
\end{lemma}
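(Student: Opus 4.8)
The plan is to follow the blow-up scheme of Lemma~\ref{lem:dfg}, the only new point being to keep track of the logarithmically divergent tail of $H[w^\eps](0)$, which is exactly what the factor $|\ln\eps|^{-1}$ is there to absorb. As in Lemma~\ref{lem:3}, the maximum principle gives $0\le w^\eps\le1$, and since $s=1$ the regularity theory for fractional elliptic equations (\cite{ros-oton-Serra}) yields a uniform $C^{1/2}_{\loc}$ bound on $w^\eps$; together with the algebraic decay of $\P$ at infinity coming from Lemma~\ref{lem:K} (so that, in the Neumann case where $\P$ tends to a nonzero constant at infinity, $H[w^\eps](0)$ must be read as a principal value at infinity, which is harmless by oddness of the kernel), this makes $H[w^\eps](0)=-c_{n,1}\,\mathrm{PV}\!\int_{\R^n}w^\eps(y)\,\frac{y}{|y|^{n+1}}\,dy$ well defined. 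I would then fix a large $\rho$ and a small $\eta$ and split this integral over $\{|y|<\rho\}$, $\{\rho\le|y|\le\eta/\eps\}$ and $\{|y|>\eta/\eps\}$.

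The first and third pieces are $O(1)$ uniformly in $\eps$, hence negligible after dividing by $|\ln\eps|$: on $\{|y|<\rho\}$ the uniform $C^{1/2}$ bound controls the principal value of the odd kernel against a H\"older function over a fixed ball, while on $\{|y|>\eta/\eps\}$ one rescales $r=L/\eps$ and observes that over the bounded range $L\in[\eta,\mathrm{diam}\,\Omega]$ the angular integral $\int_{S^{n-1}}w^\eps((L/\eps)\omega)\,\omega\,d\H^{n-1}(\omega)$ is bounded (a bounded multiple of $\log(\mathrm{diam}\,\Omega/\eta)$), the region $\eps|y|>\mathrm{diam}\,\Omega$ being handled by the decay above. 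So everything is decided by the bulk term, which, writing $m^\eps(r):=\int_{S^{n-1}}w^\eps(r\omega)\,\omega\,d\H^{n-1}(\omega)$, equals $-c_{n,1}\int_\rho^{\eta/\eps}\frac{m^\eps(r)}{r}\,dr$. The blow-up of Lemma~\ref{lem:3}/\ref{lem:dfg} gives $w^\eps\to w_0$ in $C^{1/2}_{\loc}$, where $w_0(y)=\vphi(y\cdot\nu)$ with $\vphi(-\infty)=1$, $\vphi(+\infty)=0$ (with $\nu=\nu_E(x_0)$ in the interior case, $\nu=\nu_E(x_0)=n(x_0)$ for Dirichlet on $\pa\Omega$), and $w_0\equiv1$ for Neumann on $\pa\Omega$. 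In each case $w_0(r\omega)\to\chi_{\{\omega\cdot\nu<0\}}(\omega)$ (resp. $\equiv1$) as $r\to\infty$, so the divergent part of the bulk term is governed only by $m_0^\infty:=\int_{\{\omega\cdot\nu<0\}}\omega\,d\H^{n-1}(\omega)$; a direct computation gives $m_0^\infty=-\tfrac12\big(\int_{\pa B_1}|e\cdot y|\,d\H^{n-1}(y)\big)\nu$ in the first two cases and $m_0^\infty=0$ for Neumann, which, using $\log(\eta/(\eps\rho))=|\ln\eps|+O_{\eta,\rho}(1)$, produces after division by $|\ln\eps|$ exactly the three claimed limits $\sigma^1_{n,1}\nu_E(x_0)$, $\sigma^2_{n,1}n(x_0)$, $0$ with $\sigma^1_{n,1}=\sigma^2_{n,1}=\tfrac{c_{n,1}}{2}\int_{\pa B_1}|e\cdot y|\,d\H^{n-1}(y)$ (note the profile $\vphi$ itself never enters, only its endpoint values, which is why $\sigma^1_{n,1}=\sigma^2_{n,1}$).

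The hard part is therefore the error bound $\int_\rho^{\eta/\eps}\frac{|m^\eps(r)-m_0^\infty|}{r}\,dr=o(|\ln\eps|)$. Splitting $m^\eps(r)-m_0^\infty=(m^\eps(r)-m_0(r))+(m_0(r)-m_0^\infty)$, the second term is cheap: since $\vphi(t)$ reaches its limits at rate $O(|t|^{-1})$ one gets $|m_0(r)-m_0^\infty|\le Cr^{-1/2}$, hence $\int_\rho^\infty\frac{|m_0(r)-m_0^\infty|}{r}\,dr\le C\rho^{-1/2}$. The first term is the real obstacle: $\int_\rho^{\eta/\eps}\frac{|m^\eps(r)-m_0(r)|}{r}\,dr\le\int_{B_{\eta/\eps}\setminus B_\rho}\frac{|w^\eps-w_0|}{|y|^n}\,dy$, and the trivial bound $|w^\eps-w_0|\le2$ only gives $O(|\ln\eps|)$, while $C^{1/2}_{\loc}$ convergence is useless on the growing annulus $|y|\sim\eta/\eps$. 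To get past this I would decompose $w^\eps=\mathcal K\star g^\eps+v^\eps$, where $\mathcal K$ is the fundamental solution of $1+(-\Delta)^{1/2}$ on $\R^n$ ($\mathcal K\ge0$, $\int\mathcal K=1$, $\mathcal K(z)\sim c_{n,1}|z|^{-(n+1)}$ by Lemma~\ref{lem:K}), $g^\eps(y)=\chi_E(x_0+\eps y)$, and $v^\eps$ solves $v^\eps+(-\Delta)^{1/2}v^\eps=0$ in $\Omega_\eps$ with the exterior condition inherited from $w^\eps-\mathcal K\star g^\eps$. Writing $w_0=\mathcal K\star\chi_{\{\cdot\,\nu<0\}}$ in the interior case, the De Giorgi blow-up of $E$ at $x_0\in\pa^*E$ shows that $g^\eps-\chi_{\{\cdot\,\nu<0\}}$ has relative $L^1$-mass $\bar\omega(\eta)\to0$ on every dyadic annulus $\{2^j\le|y|<2^{j+1}\}$ with $2^j\lesssim\eta/\eps$; estimating $\mathcal K\star(g^\eps-\chi_{\{\cdot\,\nu<0\}})$ annulus by annulus against the weight $|y|^{-n}$ (and bounding the far tail of $\mathcal K$ directly) then yields $\int_{B_{\eta/\eps}\setminus B_\rho}\frac{|\mathcal K\star g^\eps-w_0|}{|y|^n}\,dy\le\bar\omega(\eta)\,|\ln\eps|+o(|\ln\eps|)$, which is $o(|\ln\eps|)$ in the iterated limit $\eps\to0$ then $\eta\to0$. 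For $v^\eps$ in the interior case $B_{d/\eps}\subset\Omega_\eps$ with $d=\mathrm{dist}(x_0,\pa\Omega)>0$, and the comparison principle for $1+(-\Delta)^{1/2}$ against the barrier $\mathcal K\star\chi_{\C B_{d/(2\eps)}}$ together with $\mathcal K(z)\sim c_{n,1}|z|^{-(n+1)}$ shows $|v^\eps|\lesssim\eps/d$ on $B_{d/(4\eps)}$, so its weighted integral is $O(\eps|\ln\eps|)\to0$. In the boundary cases the same scheme applies with $w_0$ replaced by the half-space Dirichlet (resp. Neumann) solution — which is $\equiv1$ in the Neumann case, $g^\eps$ then agreeing with $\chi_\Omega$ to leading order at $x_0$ because $\nu_E(x_0)=n(x_0)$ — using in addition the differentiability of $\pa\Omega$ at $\H^{n-1}$-a.e. $x_0\in\pa^*E\cap\pa\Omega$, which provides the analogous per-annulus smallness of the domain/source discrepancy and identifies $\nu_E(x_0)=n(x_0)$; I expect this boundary bookkeeping, rather than any genuinely new idea, to be the most laborious part of the write-up.
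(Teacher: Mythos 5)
Your proposal follows the same overall architecture as the paper's proof: cut off $H[\P](x_0)$ at a fixed scale in the original variables (the far part being $O(1)$, hence killed by $|\ln\eps|^{-1}$), rescale to $w^\eps$, observe that the logarithmic divergence comes from the intermediate range and is governed solely by the asymptotic angular average of the blow-up profile, and conclude that only the endpoint values $\vphi(\pm\infty)$ enter --- whence $\sigma^1_{n,1}=\sigma^2_{n,1}=\frac{c_{n,1}}{2}\int_{\pa B_1}|e\cdot y|\,d\H^{n-1}(y)$ and the Neumann limit vanishes by oddness; your constants and signs match the paper's. Where you genuinely diverge is in justifying the replacement of $w^\eps$ by $w_0$ on the growing annulus $\rho\le|y|\le\eta/\eps$: the paper invokes local uniform convergence and then evaluates $|\ln\eps|^{-1}\,\mathrm{PV}\!\int_{|y|\le\eps^{-1}}\vphi(y_1)\frac{y}{|y|^{n+1}}dy$ by L'Hospital's rule, whereas, as you correctly point out, locally uniform convergence only controls $|y|\le R$ and the crude bound $|w^\eps-w_0|\le 2$ on the remaining range yields an $O(1)$ --- not $o(1)$ --- error after division by $|\ln\eps|$. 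Your decomposition $w^\eps=\mathcal{K}\star g^\eps+v^\eps$, with De Giorgi's density estimate at $x_0\in\pa^*E$ giving per-annulus $L^1$-smallness of $g^\eps-\chi_{\{y\cdot\nu<0\}}$ and a barrier for the sourceless remainder $v^\eps$, supplies exactly what this step needs and is not in the paper; you also correctly flag the principal-value-at-infinity issue in the Neumann case (where $\P$ tends to a nonzero constant in $\C\Omega$), which the paper's opening display sidesteps by assuming $\P=0$ outside $\Omega$. The price of your route is the annulus-by-annulus bookkeeping (the far tail of $\mathcal{K}$ and the comparison constants for $v^\eps$ near $\pa B_{d/\eps}$ need a little care, and the boundary case additionally uses differentiability of $\pa\Omega$ at $\H^{n-1}$-a.e.\ point, available here by Rademacher); what it buys is a complete proof of the one step the paper compresses into a single sentence.
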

\end{proof}

\begin{proof}[Proof of Lemma \ref{lem:ghj}]
Since $\P = 0 $ in $\C\Omega$ and $0\leq \P\leq 1$, we have  (assuming $\Omega \subset B_M$)
$$
 |\ln \eps| ^{-1}  \left| \int_{|y-x_0|\geq 1 } \P(y) \frac{x_0-y}{|x_0-y|^{n+1}}\, dy \right|
\leq |\ln \eps| ^{-1}  \int_{1\leq |y-x_0|\leq M  } \frac{1}{|x_0-y|^{n}}\, dy 
\to 0$$
so we only need to bound (and identify the limit of) the following integral:
$$
|\ln \eps| ^{-1}  \PV\int_{|y-x_0|\leq1 } \P(y) \frac{x_0-y}{|x_0-y|^{n+1}}\, dy 
=- |\ln \eps| ^{-1}  \PV\int_{|y|\leq \eps^{-1} } w^\eps(y) \frac{y}{|y|^{n+1}}\, dy .
$$

If $x_0\in \Omega$ then $B_R(0)\subset \Omega_\eps$ for $\eps$ small enough. 
Since  $w^\eps$ and $(-\Delta)^{1/2}  w^\eps$ are bounded in $L^\infty(B_R)$, it follows that $w^\eps$ is bounded in $C^\beta(B_{R/2})$  for some $\beta>0$.
In  particular, 
$$\PV\int_{|y|\leq \eps^{-1} } w^\eps(y) \frac{y}{|y|^{n+1}}\, dy  \leq 
\int_{|y|\leq 1 } \frac{|y|^{\beta} }{|y|^{n}}\, dy  
+\int_{1\leq |y|\leq \eps^{-1} }  \frac{1}{|y|^{n}}\, dy   \leq C |\ln\eps|^{-1}
$$
and the bound on  $|\ln \eps| ^{-1}  H [\P] (x_0)|$ follows.
\medskip

Next, we note that 
$w^\eps$ converge locally uniformly  to $w_0$, unique bounded solution of 
$$ 
w_0 +(-\Delta)^{1/2} w_0 = \chi_{\{x\cdot \nu_E(x_0)<0\}} \mbox{ in } \R^n .
$$
This solution is of the form $w_0(x) = \vphi(x\cdot \nu_E(x_0))$ for some function $\vphi$ satisfying in particular
$\vphi(-\infty ) =1$ and $\vphi(\infty) =0$.
Assuming (without loss of generality) that $x\cdot \nu_E(x_0)=x_1$, we get
\begin{align*}
- |\ln \eps| ^{-1}  \PV\int_{|y|\leq \eps^{-1} } \vphi(y_1) \frac{y}{|y|^{n+1}}\, dy 
& = - |\ln \eps| ^{-1}  \PV\int_{|y|\leq \eps^{-1} } \vphi(y_1) \frac{y_1}{|y|^{n+1}}\, dy \bold{e}_1 \\
& = - |\ln \eps| ^{-1}  \int_{0}^{\eps^{-1} } \frac{1}{r^{n+1}} \int_{\pa B_r} \vphi(y_1) y_1 \, d\H^{n-1}(y)\, dr \bold{e}_1 
\end{align*}
and L'Hospital Rule gives
\begin{align*}
- |\ln \eps| ^{-1}  \PV\int_{|y|\leq \eps^{-1} } \vphi(y_1) \frac{y}{|y|^{n+1}}\, dy 
& \to - \lim_{\eps\to 0} \eps^{n} 
\int_{\pa B_{\eps^{-1}}} \vphi(y_1) y_1 \, d\H^{n-1}(y) \bold{e}_1\\
& \to - \lim_{\eps\to 0}  
\int_{\pa B_{1}} \vphi(\eps y_1) y_1 \, d\H^{n-1}(y) \bold{e}_1\\
& \to 
-\int_{\pa B_{1}\cap \{y_1<0\}}   y_1 \, d\H^{n-1}(y) \bold{e}_1
\end{align*}
and the result follows with
$$ \sigma^1_{n,1}:=c_{n,1}\int_{\pa B_{1}\cap \{y_1>0\}}   y_1 \, d\H^{n-1}(y)= \frac {c_{n,1}} 2 \int_{\partial B_1} |e\cdot y|d\H^{n-1}(y).$$

\medskip

If $x_0\in\pa \Omega$ and $\beta\equiv 0$ then $\nu_E(x_0)=n(x_0)$ and regularity theory for fractional elliptic equations (\cite{ros-oton-Serra}) implies that 
 $w^\eps$ is bounded in $C^{1/2}_{loc} (\R^n)$ and so $w^\eps$ 
converges locally uniformly  to $w_0$ unique bounded solution of 
$$ 
\begin{cases}
w_0 + (-\Delta )^{1/2} w_0 =1 & \mbox{ in } \{x\cdot n(x_0) <0\}\\
  w_0  = 0& \mbox{ in }  \{x\cdot n(x_0) >0\}
\end{cases}
$$
which is of the form  $w_0(x) = \vphi_1(x\cdot n(x_0))$ for some $\vphi_1:\R\to[0,1]$ satisfying in particular
$\vphi_1(-\infty ) =1$ and $\vphi_1(\infty) =0$.
Proceeding as above, we deduce
\begin{align*}
- |\ln \eps| ^{-1}  \PV\int_{|y|\leq \eps^{-1} } \vphi_1(y_1) \frac{y}{|y|^{n+1}}\, dy 
& \to  \sigma^1_{n,1} n(x_0).
\end{align*}

When $\alpha\equiv 0$, the solution of the corresponding Neumann boundary value problem is $w_0\equiv 1$ and 
$$- |\ln \eps| ^{-1}  \PV\int_{|y|\leq \eps^{-1} } \vphi_1(y_1) \frac{y}{|y|^{n+1}}\, dy 
= - |\ln \eps| ^{-1}  \PV\int_{|y|\leq \eps^{-1} } \frac{y}{|y|^{n+1}}\, dy =0$$
which gives the last limit in Lemma \ref{lem:ghj}.
\end{proof}

\section{$\Gamma$-convergence of $\G^s_\eps(E)$ when $s\in(0,1)$}
We now consider a functional in which the nonlocal perimeter has a destabilizing effect. 
More precisely, we recall the definition of $\G^s_\eps(E)$ when $s\in(0,1)$:
\[
\G^s_\eps(E)=\displaystyle P(E,\Omega)- t \eps^{-s}\int_\Omega \chi_E (1-\P)\, dx 
\]
and we are interested in the $\Gamma$-convergence of $\G^s_\eps$ as $\eps \to 0$.
\subsection{ The case $\Omega=\R^n$ - Proof of Theorem \ref{thm:gammaRn}}
Theorem \ref{thm:gammaRn} follows from the following proposition:
\begin{proposition}\label{prop:GammaconvergenceRn}
\item[(i)]  For any set  $E$ such that $\chi_E \in L^1(\R^n)$
 $$\limsup_{\eps\to 0} \G^s_\eps(E) \leq  \G_0^s (E).$$

\item[(ii)] For any family $\{ E_\eps\}_{\eps>0}$ that converges to $E$ in $L^1(\R^n)$,
 $$\liminf_{\eps\to 0} \G^s_\eps(E_\eps) \geq  \G^s_0(E).$$

\end{proposition}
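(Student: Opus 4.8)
The plan is to prove the two inequalities separately. The $\limsup$ bound (i) follows almost immediately from Theorem \ref{thm:1}(i), whereas the $\liminf$ bound (ii) is the substantial part: here the minus sign swaps the roles of $\liminf$ and $\limsup$, so one first needs to show that any sequence with bounded energy is bounded in $\BV$, which uses crucially that for $s\in(0,1)$ the perimeter is of strictly higher order than $\J^s_\eps$.

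For (i), given $E$ with $\chi_E\in L^1(\R^n)$ I would take the stationary recovery sequence $E_\eps\equiv E$. If $P(E)=\infty$ there is nothing to prove (with the convention that $\G^s_0=+\infty$ on such sets); if $P(E)<\infty$, then \eqref{eq:PsP} gives $P_s(E)<\infty$, so Theorem \ref{thm:1}(i) applies and $\J^s_\eps(E)\to c_{n,s}P_s(E)$, whence $\G^s_\eps(E)=P(E)-t\J^s_\eps(E)\to P(E)-tc_{n,s}P_s(E)=\G^s_0(E)$, in fact as a genuine limit.

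For (ii), let $E_\eps\to E$ in $L^1(\R^n)$. We may assume $L:=\liminf_{\eps\to0}\G^s_\eps(E_\eps)<\infty$ and pass to a subsequence (not relabeled) with $\G^s_\eps(E_\eps)\le L+o(1)$. Since $|E_\eps|<\infty$, we have $\J^s_\eps(E_\eps)\le\eps^{-s}|E_\eps|<\infty$, hence $P(E_\eps)=\G^s_\eps(E_\eps)+t\J^s_\eps(E_\eps)<\infty$, and then $P_s(E_\eps)<\infty$ by \eqref{eq:PsP}. Using the identity \eqref{eq:int1} together with $0\le\P\le1$ and Lemma \ref{lem:Psp} yields the a priori bound $0\le\J^s_\eps(E_\eps)\le c_{n,s}P_s(E_\eps)$; combining it with \eqref{eq:PsP} and $\sup_\eps|E_\eps|<\infty$ (a consequence of $L^1$ convergence) gives
\begin{equation*}
P(E_\eps)\;\le\;L+o(1)+t\,c_{n,s}\,\frac{n\omega_n 2^{-s}}{s(1-s)}\,P(E_\eps)^{s}\,|E_\eps|^{1-s}.
\end{equation*}
Because $s<1$, the right-hand side grows sublinearly in $P(E_\eps)$, so this inequality forces $\sup_\eps P(E_\eps)<\infty$ along the subsequence.

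Given this $\BV$ bound, I would finish as follows. Lower semicontinuity of the perimeter gives $\liminf_\eps P(E_\eps)\ge P(E)$, and the uniform perimeter bound makes $P_s$ continuous along the sequence: using $\chi_{E_\eps}-\chi_E=\chi_{E_\eps\setminus E}-\chi_{E\setminus E_\eps}$, the triangle inequality for the $W^{s,1}$-seminorm, and \eqref{eq:PsP} applied to $E_\eps\setminus E$ and $E\setminus E_\eps$ (whose measures tend to $0$ and whose perimeters are $\le P(E_\eps)+P(E)$), one gets $[\chi_{E_\eps}-\chi_E]_{W^{s,1}(\R^n)}\to0$, hence $P_s(E_\eps)\to P_s(E)$. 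Combined with $\J^s_\eps(E_\eps)\le c_{n,s}P_s(E_\eps)$ this gives $\limsup_\eps\J^s_\eps(E_\eps)\le c_{n,s}P_s(E)$, so
\begin{align*}
\liminf_{\eps\to0}\G^s_\eps(E_\eps)
&=\liminf_{\eps\to0}\bigl(P(E_\eps)-t\J^s_\eps(E_\eps)\bigr)\\
&\ge\liminf_{\eps\to0}P(E_\eps)-t\limsup_{\eps\to0}\J^s_\eps(E_\eps)\;\ge\;P(E)-t\,c_{n,s}P_s(E)=\G^s_0(E).
\end{align*}
The main obstacle is precisely this last step: since $-P_s$ is only upper semicontinuous, one genuinely needs both the $\BV$ compactness and the continuity of $P_s$ on sequences of bounded perimeter, and both rely on the sublinear estimate $P_s\lesssim P^{s}|E|^{1-s}$ which holds only because $s<1$ — this is exactly why the borderline case $s=2$ (Theorem \ref{thm:gammaOmega2}) requires a genuinely different and more delicate argument.
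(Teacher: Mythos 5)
Your proof is correct, and its overall architecture matches the paper's: part (i) reduces to Theorem \ref{thm:1}(i) after observing via \eqref{eq:PsP} that finiteness of $\G_0^s(E)$ (equivalently of $P(E)$, given $|E|<\infty$) is the only nontrivial case, and part (ii) hinges on the a priori bound $0\le \J^s_\eps(E_\eps)\le c_{n,s}P_s(E_\eps)$ plus the interpolation inequality to force $\sup_k P(E_{\eps_k})<\infty$, after which lower semicontinuity of the perimeter does the work. The one place you genuinely diverge is the treatment of the nonlocal term at the end. The paper proves the \emph{full} convergence $\J^s_{\eps_k}(E_{\eps_k})\to c_{n,s}P_s(E)$: it splits $|J_k-J_\infty|$ into the symmetric-difference piece $2P_s(E_{\eps_k}\Delta E)\to 0$ and a second piece that requires re-running the dominated-convergence argument of Theorem \ref{thm:1}(i) (i.e.\ the a.e.\ convergence $\phi^{\eps_k}_{E_{\eps_k}}\to\chi_E$ from Lemma \ref{lem:1}). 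You observe that, because of the minus sign, only the one-sided estimate $\limsup_k\J^s_{\eps_k}(E_{\eps_k})\le c_{n,s}\lim_k P_s(E_{\eps_k})=c_{n,s}P_s(E)$ is needed, and you get the continuity $P_s(E_{\eps_k})\to P_s(E)$ from the same symmetric-difference estimate $P_s(E_{\eps_k}\Delta E)\lesssim P(E_{\eps_k}\Delta E)^s|E_{\eps_k}\Delta E|^{1-s}\to0$. This bypasses the dominated-convergence step entirely and uses only the maximum principle $0\le\P\le1$; the trade-off is that the paper's version yields the stronger statement that the nonlocal energy itself converges along the sequence, which is occasionally useful elsewhere, while yours is leaner for the $\Gamma$-liminf purpose at hand. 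Both correctly isolate the reason the argument works only for $s<1$, namely the sublinearity of $P_s$ in $P$.
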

\begin{proof}
Using \eqref{eq:PsP} and Young's inequality, we find that for all $\mu$, there exists $C(\mu)$ such that
$$
P_s(E) \leq CP(E)^s|E|^{1-s}\leq \mu P(E)+C(\mu)|E|.
$$
With $\mu$ small enough, we thus have
\begin{equation}\label{eq:GP}
\G^s_0(E)=P(E)-tc_{n,s} P_s(E) \geq \frac 1 2 P(E) - C |E|.
\end{equation}

We note that (i) is obviously true when $\G^s_0(E)=\infty$, so we can assume that $\G^s_0(E)<\infty$ which, using \eqref{eq:GP} implies $P(E)<\infty$ and Theorem \ref{thm:1}-(i) implies $\lim_{\eps\to 0} \G^s_\eps(E) =  \G_0^s (E)$.
\medskip

To prove (ii), we first note that
$$
\eps^{-s}\int_{\R^n}{\chi_E(1-\phi_E^{\eps})}=c_{n,s} \int_{\R^n} \P (\chi_E \psi_{\C E} - \chi_{\C E} \psi_E)\, dx\leq c_{n,s}P_s(E)
$$
and so (proceeding as above), 
$$ 
\G^s_\eps(E) \geq \frac 1 2 P(E) - C |E|.
$$
We note that (ii) is trivially true if $\liminf_{\eps\to 0}\G^s_\eps(E_\eps)=\infty$, so we can assume that
$\liminf_{\eps\to 0}\G^s_\eps(E_\eps)<C$, and we fix a subsequence 
 $\{E_{\eps_k}\}$ such that
$\liminf_{\eps\to 0}\G^s_\eps(E_\eps)=\lim_{\eps_k\to 0}{\G^s_{\eps_k}(E_{\eps_k})}$ and $\G^s_{\eps_k}(E_{\eps_k})<C$.
Taking $\mu=\frac{1}{2t}$ and we get
\[
\frac 1 2 P(E_{\eps_k})\leq \G^s_{\eps_k}(E_{\eps_k}) + C |E_{\eps_k}|<C.
\]
We can use this bound to show that (recall \eqref{eq:int1})
\[
J_k=\eps_k^{-s}\int_{\R^n}{\chi_{E_{\eps_k}}(1-\phi_{E_{\eps_k}}^{\eps_k})}=c_{n,s} \int_{\R^n} \phi_{E_{\eps_k}} ^{\eps_k}(\chi_{E_{\eps_k}} \psi_{\C {E_{\eps_k}}} - \chi_{\C {E_{\eps_k}}} \psi_{E_{\eps_k}})\, dx
\]
converges to 
\[
J_{\infty}=c_{n,s}\int_{\R^n}{\chi_E\psi_{\C E}} = c_{n,s} P_s(E).
\]
Indeed, we have
\begin{align*}
\frac{1}{c_{n,s}}|J_k-J_{\infty}|&\leq \int_{\R^n} \phi_{E_{\eps_k}} ^{\eps_k}|\chi_{E_{\eps_k}} \psi_{\C {E_{\eps_k}}} - \chi_{\C {E_{\eps_k}}} \psi_{E_{\eps_k}}-\chi_E \psi_{\C E} + \chi_{\C E} \psi_E|\, dx\\
&\quad +\left|\int_{\R^n} \phi_{E_{\eps_k}} ^{\eps_k}(\chi_E \psi_{\C E} - \chi_{\C E} \psi_E)\, dx-\int_{\R^n}{\chi_E \psi_{\C E}}\right|.
\end{align*}
We can proceed as in the proof of Theorem \ref{thm:1}-(i) to prove that the second term goes to $0$ as $\eps_k \to 0$ by. The first term is bounded by
\[
\int_{\R^n} |\chi_{E_{\eps_k}} \psi_{\C {E_{\eps_k}}} - \chi_{\C {E_{\eps_k}}} \psi_{E_{\eps_k}}-\chi_E \psi_{\C E} + \chi_{\C E} \psi_E|\, dx\leq 2P_s(E_{\eps_k}\Delta E)\leq CP(E_{\eps_k}\Delta E)^s|E_{\eps_k}\Delta E|^{1-s}\to 0
\] 
since $E_{\eps_k}\to E$ in $L^1(\R^n)$ and $P(E),P(E_{\eps_k})<\infty$. 
Finally, we have $\G^s_{\eps_k} (E_{\eps_k})=\displaystyle P(E_{\eps_k}) - t J_k$, so the result now follows from the lower semicontinuity of the perimeter.

\end{proof}

\subsection{ The case $\Omega\neq \R^n$ with Robin boundary conditions}
We need to prove:

\begin{proposition}\label{prop:Gammaconvergencerobin}
\item[(ii)] For any set  $E$ such that $\chi_E\in L^1(\R^n)$, 
 $$\limsup_{\eps\to 0} \G^s_\eps(E) \leq  \G_0^s (E).$$

\item[(ii)] For any family $\{ E_\eps\}_{\eps>0}$ that converges to $E$ in $L^1(\Omega)$,
 $$\liminf_{\eps\to 0} \G^s_\eps(E_\eps) \geq  \G^s_0(E).$$

\end{proposition}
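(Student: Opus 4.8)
The plan is to mirror the proof of Proposition~\ref{prop:GammaconvergenceRn}, replacing Theorem~\ref{thm:1} by Theorem~\ref{thm:2} and taking care of the extra boundary contribution. Since $\G^s_\eps$ and $\G^s_0$ depend only on $E\cap\Omega$, I would assume throughout that $E,E_\eps\subset\Omega$. Two elementary facts will be used repeatedly: (a) each of the two boundary integrals in $\J^s_0$ is $\le\int_{\C\Omega}\psi_\Omega\,dx=P_s(\Omega)\le CP(\Omega)^s|\Omega|^{1-s}<\infty$, because $\tfrac{\alpha}{\alpha+\beta},\tfrac{\beta}{\alpha+\beta}\le1$ and $\psi_E,\tfrac{\psi_E\psi_{\C E\cap\Omega}}{\psi_\Omega}\le\psi_\Omega$; and (b) $P^L_s(E,\Omega)\le C\bigl(P(E,\Omega)+P(\Omega)\bigr)^s|\Omega|^{1-s}$, which follows from \eqref{eq:PsLP} together with $P(E\cap\Omega)\le P(E,\Omega)+P(\Omega)$, so that (as $s<1$) the term $P^L_s(E,\Omega)$ is of strictly lower order than $P(E,\Omega)$.

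For part (i) (the $\limsup$) there is nothing to prove when $\G^s_0(E)=+\infty$, so assume $\G^s_0(E)<\infty$. Using (a) and (b) in $\G^s_0(E)\ge P(E,\Omega)-tc_{n,s}P^L_s(E,\Omega)-tc_{n,s}P_s(\Omega)$ forces $P(E,\Omega)<\infty$, hence $P^L_s(E,\Omega)<\infty$ by (b). Then Theorem~\ref{thm:2}-(i) applies and gives $\J^s_\eps(E)\to\J^s_0(E)$; since $P(E,\Omega)$ is a fixed constant, $\G^s_\eps(E)=P(E,\Omega)-t\J^s_\eps(E)\to\G^s_0(E)$ (in fact the limit exists).

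For part (ii) (the $\liminf$) I would assume $\liminf_{\eps\to0}\G^s_\eps(E_\eps)<\infty$ and pass to a subsequence, not relabelled, along which $\G^s_\eps(E_\eps)$ converges to this liminf and stays $\le C$. \textbf{Step~1 (a priori bound).} Starting from \eqref{eq:introbin}, and using $0\le\Pe\le1$, the fact that $\N(\chi_{E_\eps})=-c_{n,s}\psi_{E_\eps}\le0$ on $\C\Omega$, identity \eqref{eq:LPs}, and Lemma~\ref{lem:Psp}, I would obtain $\J^s_\eps(E_\eps)\le c_{n,s}\bigl(P^L_s(E_\eps,\Omega)+P_s(\Omega)\bigr)$; combining this with (b) and Young's inequality yields $\G^s_\eps(E_\eps)\ge\tfrac12P(E_\eps,\Omega)-C'$, so $P(E_\eps,\Omega)$ is bounded. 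Hence $\chi_{E_\eps}$ is bounded in $BV(\Omega)$, $P(E,\Omega)\le\liminf_\eps P(E_\eps,\Omega)$, and $P^L_s(E_\eps,\Omega)$ is uniformly bounded. \textbf{Step~2 (convergence of $\J^s_\eps(E_\eps)$).} I would rerun the proof of Theorem~\ref{thm:2}-(i) with varying sets. For the interior term in \eqref{eq:introbin} I would, via \eqref{eq:LPs}, write $\int_\Omega\Pe(\chi_{E_\eps}\psi_{\C E_\eps}-\chi_{\C E_\eps}\psi_{E_\eps})\,dx$ as the same integral with $E$ in place of $E_\eps$ — which converges to $\int_\Omega\chi_E\psi_{\C E}\,dx=P_s(E)$ by dominated convergence (dominant $\chi_E\psi_{\C E}+\chi_{\C E}\psi_E\in L^1(\Omega)$, a.e.\ convergence $\Pe\to\chi_E$ by arguing as in Lemma~\ref{lem:2}) — plus an error bounded by $\tfrac1{c_{n,s}}\|(-\Delta)^{s/2}(\chi_{E_\eps}-\chi_E)\|_{L^1(\R^n)}\le2\bigl(P_s(E_\eps\setminus E)+P_s(E\setminus E_\eps)\bigr)$, which tends to $0$ by \eqref{eq:PsP} since $|E_\eps\Delta E|\to0$ while $P(E_\eps),P(E)$ stay bounded (Step~1 together with $P(F\cap\Omega)\le P(F,\Omega)+P(\Omega)$). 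For the exterior term, the boundary condition \eqref{eq:bc2} gives $\int_{\C\Omega}\Pe\N(\chi_{E_\eps})\,dx=-c_{n,s}\int_{\C\Omega}\tfrac{\beta}{\alpha+\beta}\tfrac{\psi_{E_\eps}(x)}{\psi_\Omega(x)}\Bigl(\int_\Omega\tfrac{\Pe(y)}{|x-y|^{n+s}}\,dy\Bigr)dx$, and since $\psi_{E_\eps}\to\psi_E$ and $\int_\Omega\tfrac{\Pe(y)}{|x-y|^{n+s}}\,dy\to\psi_E(x)$ for a.e.\ $x\in\C\Omega$, with dominant $\psi_\Omega\in L^1(\C\Omega)$, this converges to $-c_{n,s}\int_{\C\Omega}\tfrac{\beta}{\alpha+\beta}\tfrac{\psi_E^2}{\psi_\Omega}\,dx$. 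Adding the two limits and invoking the identity \eqref{same} gives $\J^s_\eps(E_\eps)\to\J^s_0(E)$, whence $\liminf_{\eps\to0}\G^s_\eps(E_\eps)=\liminf_\eps P(E_\eps,\Omega)-t\J^s_0(E)\ge P(E,\Omega)-t\J^s_0(E)=\G^s_0(E)$.

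I expect the only real difficulty to be Step~2 of part (ii): one must check that every dominated-convergence argument in the fixed-set proof of Theorem~\ref{thm:2} survives the passage to varying sets, and it is precisely there that the a priori $BV(\Omega)$-bound of Step~1 — available only because the leading perimeter term makes $\G^s_\eps$ coercive — is indispensable, the exterior boundary term being the part that needs the most care. Everything else should be a routine adaptation of Theorem~\ref{thm:2} and Proposition~\ref{prop:GammaconvergenceRn}.
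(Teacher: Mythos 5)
Your proposal is correct and follows essentially the same route as the paper: coercivity of $\G^s_\eps$ via the interpolation inequality \eqref{eq:PsLP} to get the a priori $BV(\Omega)$ bound, then convergence of $\J^s_\eps(E_\eps)$ to $\J^s_0(E)$ by splitting off a set-variation error (controlled by the fractional perimeter of $E_\eps\Delta E$ through \eqref{eq:PsP}) and a $\psi_{E_\eps}\to\psi_E$ error in the exterior term, and finally lower semicontinuity of the perimeter. The only cosmetic differences are that you bound the interior error by $P_s(E_\eps\setminus E)+P_s(E\setminus E_\eps)$ where the paper uses $P^L_s(E_\eps\Delta E,\Omega)$, and you treat the exterior term by one dominated-convergence argument on the product where the paper splits it into two pieces.
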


\begin{proof}
Using \eqref{eq:PsLP}, we find
$$P_s^L(E,\Omega)
\leq C(P(E,\mathrm{Conv}(\Omega)))^s|E\cap \Omega|^{1-s}
\leq \mu P(E,\mathrm{Conv}(\Omega))+C(\mu)|E|.$$
If $\Omega$ is convex, we have $ P(E,\mathrm{Conv}(\Omega))= P(E, \Omega)$. In the general case, we assume that $P(\Omega)<\infty$ and write $P(E,\mathrm{Conv}(\Omega)) \leq P(E)= P(E,\Omega) +P(\Omega)$ (recall that $E\subset \Omega$) to get
$$P_s^L(E,\Omega)
\leq \mu P(E,\Omega)+\mu P(\Omega)+C(\mu)|E|.$$
Since $0\leq \frac{\alpha}{\alpha+\beta},\frac{\beta}{\alpha+\beta} \leq 1$, we have
$$\int_{\C\Omega}  \frac{\alpha}{\alpha+\beta} \psi_{E }(x)\, dx\leq 
\int_{\C \Omega}{\psi_Edx}\leq \int_{\C \Omega}{\psi_\Omega dx}= P_s(\Omega)<\infty,$$
and
$$\int_{\C\Omega}   \frac{\beta}{\alpha+\beta}	 \frac{ \psi_{E\cap\Omega} (x)\psi_{\C E \cap \Omega} (x)}{\psi_\Omega(x)} \, dx \leq \int_{\C \Omega}{\frac{\psi_{E\cap \Omega}\psi_{\C E\cap \Omega}}{\psi_{\Omega}}dx}\leq \int_{\C \Omega}{\int_{\C E \cap \Omega}\frac{1}{|x-y|^{n+s}dy}dx}\leq P_s(\Omega)<\infty.$$
It follows that 
$$ 
\G^s_0(E) \geq \frac 1 2 P(E,\Omega) - CP(\Omega)-C |E| - CP_s(\Omega).
$$

Since the statement (i) is true if $\G^s_0(E)=\infty$, we can assume that $\G^s_0(E)<\infty$, which now implies that $P(E,\Omega)<\infty$. The result thus follows from Theorem \ref{thm:2}-(i).
\medskip

For (ii), 
we note, using \eqref{eq:introbin} and the fact that $0\leq \P\leq 1$, that
\begin{align*}
\eps^{-s}\int_{\Omega}{\chi_E(1-\phi_E^{\eps})}&=C_{n,s} \int_{\Omega} {\P (\chi_E \psi_{\C E} - \chi_{\C E} \psi_E)\, dx} -C_{n,s}\int_{\C \Omega}{\int_{\Omega}{\frac{\beta}{\alpha+\beta}\frac{\psi_{E}(x)}{\psi_{\Omega}(x)}\frac{\P}{|x-y|^{n+s}}dy}dx}\\
&\leq 2C_{n,s}P_s^L(E,\Omega).
\end{align*}
Proceeding as  in the proof of Proposition \ref{prop:GammaconvergenceRn}, we can assume that $ \liminf_{\eps\to 0}\G^s_\eps(E_\eps)=\lim_{\eps_k\to 0}{\G^s_{\eps_k}(E_{\eps_k})}<\infty$
 and  $P(E_{\eps_k})\leq C$ and we need to show that 
\begin{align*}
J_k:=\frac{1}{C_{n,s}}\eps_k^{-s}\int_{\Omega}{\chi_{E_{\eps_k}}(1-\phi_{E_{\eps_k}}^{\eps_k})}&= \int_{\R^n} \phi_{E_{\eps_k}} ^{\eps_k}(\chi_{E_{\eps_k}} \psi_{\C {E_{\eps_k}}} - \chi_{\C {E_{\eps_k}}} \psi_{E_{\eps_k}})\, dx\\
&\quad -\int_{\C \Omega}{\int_{\Omega}{\frac{\beta(x)}{\alpha(x)+\beta(x)}\frac{\psi_{E_{\eps_k}}(x)}{\psi_{\Omega}(x)}\frac{\phi_{E_{\eps_k}}^{\eps_k}}{|x-y|^{n+s}}dy}dx}
\end{align*}
converges to
\[
J_{\infty}:=\int_{\Omega}{\chi_E\psi_{\C E}}+\int_{\C \Omega}{\frac{\alpha}{\alpha+\beta}\psi_{ E}}+\int_{\C \Omega}{\frac{\beta}{\alpha+\beta}\frac{\psi_{E\cap \Omega}\psi_{\C E\cap \Omega}}{\psi_{\Omega}}}.
\]
We have
\begin{equation}\label{eq:ineqgammaconvergence}
\begin{aligned}
|J_k-J_{\infty}|&\leq \int_{\Omega} \phi_{E_{\eps_k}} ^{\eps_k}|\chi_{E_{\eps_k}} \psi_{\C {E_{\eps_k}}} - \chi_{\C {E_{\eps_k}}} \psi_{E_{\eps_k}}-\chi_E \psi_{\C E} + \chi_{\C E} \psi_E|\, dx\\
&+\left|\int_{\C \Omega}{\int_{\Omega}{\frac{\beta(x)}{\alpha(x)+\beta(x)}\frac{\phi_{E_{\eps_k}}^{\eps_k}(y)}{|x-y|^{n+s}}\frac{\psi_{E_{\eps_k}}(x)-\psi_{E}(x)}{\psi_{ \Omega}(x)}}}\right|\\
&+\left|\int_{\Omega}{\phi_{E_{\eps_k}} ^{\eps_k}(\chi_E \psi_{\C E} - \chi_{\C E} \psi_E)}-\int_{\C \Omega}{\int_{\Omega}{\frac{\beta(x)}{\alpha(x)+\beta(x)}\frac{\phi_{E_{\eps_k}}^{\eps_k}(y)}{|x-y|^{n+s}}\frac{\psi_{E}(x)}{\psi_{ \Omega}(x)}}}-J_{\infty}\right|.
\end{aligned}
\end{equation}
We can proceed as in the proof of Theorem \ref{thm:2} to show that the last term goes to $0$ as $k \to \infty$.
The first term is bounded by
\begin{align*}
&  \int_{\Omega} |\chi_{E_{\eps_k}} \psi_{\C {E_{\eps_k}}} - \chi_{\C {E_{\eps_k}}} \psi_{E_{\eps_k}}-\chi_E \psi_{\C E} + \chi_{\C E} \psi_E|\, dx\\
&\qquad \leq 2P_s^L(E_{\eps_k}\Delta E,\Omega)\leq C(P(E_{\eps_k}\Delta E,\Omega)+P(\Omega))^s|E_{\eps_k}\Delta E|^{1-s} \longrightarrow 0
\end{align*}
since $E_{\eps_k}\to E$ in $L^1(\Omega)$ and $P(E,\Omega),P(E_{\eps_k},\Omega),P(\Omega)<\infty$. The second term is bounded by
\begin{align*}
 \int_{\C \Omega}{\int_{\Omega}{\frac{1}{|x-y|^{n+s}}\frac{|\psi_{E_{\eps_k}}(x)-\psi_{E}(x)|}{\psi_{\Omega}(x)}dy}dx}&=\int_{\C \Omega}{\int_{\Omega}{\frac{|\chi_{E_{\eps_k}}(y)-\chi_E(y)|}{|x-y|^{n+s}}dy}dx}\\
 &=\int_{\Omega}{\psi_{\C \Omega}(y)|\chi_{E_{\eps_k}}(y)-\chi_E(y)|dy}.
\end{align*}
Since $\psi_{\C\Omega}(y)\in L^1(\Omega)$, $|\chi_{E_{\eps_k}}(y)-\chi_E(y)|\leq 1$ and  $|\chi_{E_{\eps_k}}(y)-\chi_E(y)|\to 0$ a.e., Lebesgue's dominated convergence theorem yields
$$\int_{\C \Omega}{\int_{\Omega}{\frac{|\chi_{E_{\eps_k}}(y)-\chi_E(y)|}{|x-y|^{n+s}}}dx} \to 0$$
as $\eps_k\to 0$.
This proves that the right hand side in \eqref{eq:ineqgammaconvergence} goes to zero, and thus $J_k\to J_\infty$.
Since 
$\G^s_{\eps_k} (E_{\eps_k})=\displaystyle P(E_{\eps_k},\Omega) - t J_k$,
we conclude the proof of (ii) by using the lower semicontinuity of the perimeter.
\end{proof}

\section{$\Gamma$-convergence of $\G_\eps(E)$ (case $s=2$)}
Theorem \ref{thm:gammaOmega2} follows from the following proposition:
\begin{proposition}\label{prop:Gammaconvergenceneuman}
Assume that $\Omega$ is a bounded set with $C^{1,\alpha}$ boundary and that $t<2$.
\item[(i)] For any set  $E$ such that $\chi_E\in L^1(\R^n)$, 
 $$\limsup_{\eps\to 0} \G_\eps(E) \leq  \G_0 (E) = \left( 1-\frac t 2\right) P(E,\Omega).$$
\item[(ii)] For any family $\{ E_\eps\}_{\eps>0}$ that converges to $E$ in $L^1(\Omega)$,
 $$\liminf_{\eps\to 0} \G_\eps(E_\eps) \geq  \G_0(E).$$

\end{proposition}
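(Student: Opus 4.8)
The plan is to prove the two inequalities of the proposition separately: part (i) will be an immediate consequence of Proposition \ref{prop:4}, while part (ii) will rest entirely on the uniform gradient bound for $\eps\na\Pe$ furnished by Lemma \ref{lem:Pgrad}.

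For \textbf{(i)} I would use the constant recovery sequence $E_\eps=E$. If $P(E,\Omega)=\infty$, then $\G_0(E)=\infty$ (since $t<2$) and there is nothing to prove; otherwise $P(E,\Omega)<\infty$ and Proposition \ref{prop:4}, applied with the Neumann choice $\alpha\equiv 0$, $\beta\equiv 1$, gives $\J_\eps(E)\to\frac12 P(E,\Omega)$, so that $\G_\eps(E)=P(E,\Omega)-t\J_\eps(E)\to(1-\frac t2)P(E,\Omega)=\G_0(E)$.

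For \textbf{(ii)} the idea is to bound $\J_\eps$ from \emph{above} by (essentially) $\frac12 P(\cdot,\Omega)$; since $1-\frac t2>0$, lower semicontinuity of the perimeter then closes the argument. I may assume $\liminf_{\eps\to 0}\G_\eps(E_\eps)<\infty$ and pass to a subsequence realizing this liminf along which $\G_\eps(E_\eps)\le C$; because $0\le\J_\eps(E_\eps)=\eps^{-1}\int_{E_\eps}(1-\Pe)\,dx\le\eps^{-1}|\Omega|<\infty$, this already forces $P(E_\eps,\Omega)<\infty$. Exactly as in the proof of Proposition \ref{prop:4},
$$\J_\eps(E_\eps)=-\eps\int_{E_\eps}\Delta\Pe\,dx=-\eps\int_{\pa^*E_\eps}\na\Pe\cdot\nu_{E_\eps}\,d\H^{n-1}(x);$$
the part of $\pa^*E_\eps$ lying on $\pa\Omega$ contributes nothing, since there $\nu_{E_\eps}=n$ and $\na\Pe\cdot n=0$ by the Neumann condition, so $\J_\eps(E_\eps)\le\int_{\pa^*E_\eps\cap\Omega}|\eps\na\Pe|\,d\H^{n-1}$. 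By Lemma \ref{lem:Pgrad}, $\sup_\Omega|\eps\na\Pe|\le\frac12+\omega(\eps)$ with $\omega(\eps)\to 0$, hence $\J_\eps(E_\eps)\le(\frac12+\omega(\eps))P(E_\eps,\Omega)$. Feeding this back into $\G_\eps(E_\eps)\le C$ gives $(1-\frac t2-t\omega(\eps))P(E_\eps,\Omega)\le C$, so $P(E_\eps,\Omega)$ stays bounded for $\eps$ small, $\omega(\eps)P(E_\eps,\Omega)=o(1)$, and therefore
$$\G_\eps(E_\eps)=P(E_\eps,\Omega)-t\J_\eps(E_\eps)\ge\Big(1-\frac t2\Big)P(E_\eps,\Omega)-o(1).$$
Passing to the liminf and using the $L^1(\Omega)$-lower semicontinuity of $E\mapsto P(E,\Omega)$, together with $1-\frac t2>0$, yields $\liminf_{\eps\to 0}\G_\eps(E_\eps)\ge(1-\frac t2)P(E,\Omega)=\G_0(E)$.

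The \textbf{main obstacle} is Lemma \ref{lem:Pgrad} itself. Unlike the case $s\in(0,1)$, where the destabilizing term is of lower order, here $P(E,\Omega)$ and $t\J_\eps(E)$ have the same order, so the bound $|\eps\na\Pe(x)|\le\frac12+o(1)$ must be genuinely uniform both in $x\in\Omega$ and along the family $\{E_\eps\}$. At interior points one would rescale at scale $\eps$, compare $\Pe$ with the full-space solution $K_\eps\star\chi_{E_\eps}$, and apply the rescaled Lemma \ref{lem:half}; near $\pa\Omega$ one would flatten the boundary (using the $C^{1,\alpha}$ hypothesis) and reflect evenly across it, which converts the Neumann problem into a full-space problem with datum $\chi_{E_\eps}+\chi_{E_\eps^*}\in\{0,1\}$ to which Lemma \ref{lem:half} again applies, the errors due to the curvature of $\pa\Omega$ and to the blow-up of $\chi_{E_\eps}$ being absorbed through uniform $C^{1,\gamma}$ estimates for the Neumann problem. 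Ensuring that the error $\omega(\eps)$ is independent of $E_\eps$ is the delicate point.
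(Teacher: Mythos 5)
Your proposal is correct and follows essentially the same route as the paper: part (i) via Proposition \ref{prop:4}, and part (ii) via the identity $\J_\eps(E)=-\eps\int_{\pa^*E\cap\Omega}\na\P\cdot\nu_E\,d\H^{n-1}$, the uniform bound of Lemma \ref{lem:Pgrad}, and lower semicontinuity of the perimeter. The only (cosmetic) difference is that you first extract uniform boundedness of $P(E_\eps,\Omega)$ and absorb the error as $o(1)$, whereas the paper keeps a fixed $\eta<1-\frac t2$, derives $\G_\eps(E)\ge(1-\frac t2-\eta)P(E,\Omega)$ for small $\eps$, and lets $\eta\to0$ at the end.
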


The key tool in the proof of this proposition  is the following lemma:
\begin{lemma} \label{lem:Pgrad}
Assume that $\Omega$ is a bounded set with $C^{1,\alpha}$ boundary.
For any measurable set $E \subset \Omega$, 
and for any $\alpha'\in(0,\alpha)$,
the solution $\P$ of \eqref{eq:03} satisfies
\begin{equation}\label{eq:phigrad}
|\eps \na \P(x)| \leq \frac 1 2 + C\left(\frac 1 R + \eps^{\alpha'} R^{1+\alpha'} \right) \mbox{ for all $R>0$, $\eps>0$, $x\in\Omega$} 
\end{equation}
where the constant $C$ depends on $\Omega$ but not on $E$.
\end{lemma}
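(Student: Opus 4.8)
The estimate \eqref{eq:phigrad} is a quantitative refinement of Lemma \ref{lem:half}/\eqref{eq:grad}: there the kernel $K_\eps$ on all of $\R^n$ gave the clean bound $|\eps\na(K_\eps\star f)|\le\frac12$, but here the potential $\P$ solves \eqref{eq:03} in the bounded set $\Omega$ with Neumann condition, so one expects the optimal constant $\frac12$ up to an error that measures how close $\P$ is, near a point $x$, to a half-space solution $K_\eps\star(\text{something})$. The plan is to decompose, for a fixed $x\in\Omega$ and a fixed scale $R>0$ (to be optimized), the characteristic function $\chi_E$ into a piece that lives near $x$ and a piece that lives far away, and to estimate the contributions to $\eps\na\P(x)$ separately.

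First I would rescale: set $w^\eps(y)=\P(x+\eps y)$, which solves $w^\eps-\Delta w^\eps=\chi_E(x+\eps y)$ in $\Omega_\eps=\eps^{-1}(\Omega-x)$ with $\na w^\eps\cdot n=0$ on $\pa\Omega_\eps$, and $0\le w^\eps\le1$ by the maximum principle; then $\eps\na\P(x)=\na w^\eps(0)$. Write $w^\eps=w_1+w_2$ where $w_1=K\star \big(\chi_{E}(x+\eps\cdot)\,\chi_{B_{\eps R}(x)/\eps}\big)$ is the whole-space convolution with the data restricted to a ball of radius $R$ around $x$ (in original variables), so that $w_2$ solves an equation with data supported at distance $\ge \eps R$ from $x$ (plus boundary terms). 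Wait — cleaner: let $v_1=K_\eps\star(\chi_E\,\chi_{B_R(x)})$ on $\R^n$ and let $v_2=\P-v_1$; then $\na v_1(x)$ is controlled by Lemma \ref{lem:half} (applied to $f=\chi_E\chi_{B_R(x)}$, which satisfies $0\le f\le1$), giving $|\eps\na v_1(x)|\le\frac12$, and $v_2$ solves $v_2-\eps^2\Delta v_2=0$ in $\Omega$ with $\na v_2\cdot n=-\na v_1\cdot n$ on $\pa\Omega$, and moreover $v_2$ has no source term near $x$ so its gradient at $x$ is governed purely by the "far" mass $K_\eps\star(\chi_E\chi_{\C B_R(x)})$ and by the boundary correction.

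The key steps in order: (1) the near-field bound $|\eps\na v_1(x)|\le\frac12$ from Lemma \ref{lem:half}; (2) a direct bound on the far-field convolution: since $\eps^{n+1}|\na K_\eps(z)|=|\na K|(z/\eps)$ decays like $|z/\eps|^{-(n-1)/2}e^{-|z/\eps|}$ by Lemma \ref{lem:K} (with $s=2$), one gets $\big|\eps\na\big(K_\eps\star(\chi_E\chi_{\C B_R(x)})\big)(x)\big|\le \int_{|z|\ge R}|\na K|(z/\eps)\,\eps^{-n}dz\le C e^{-cR/\eps}$ — in fact one can do better and make this term absorbed; (3) the boundary-correction term $v_2^{\mathrm{bdry}}$ solving the homogeneous equation with Neumann data $g=-\na v_1\cdot n\in C^{\alpha'}(\pa\Omega)$ of size $O(\eps^{-1})$: here I would use the $C^{1,\alpha'}$ boundary regularity theory for the Neumann problem for $-\eps^2\Delta+1$ on the $C^{1,\alpha}$ domain $\Omega$, rescaled, to produce a bound $|\eps\na v_2^{\mathrm{bdry}}(x)|\le C\big(\tfrac1{d(x,\pa\Omega)} + \text{(boundary curvature/Hölder terms)}\big)$, and then combine with the scale $R$; (4) optimize over $R$: the term $\frac1R$ comes from the interior decay estimate (polynomial comparison with the fundamental solution of $-\eps^2\Delta+1$, which decays exponentially but whose gradient near the source behaves like $\frac1{|z|}$ at scale $\eps$, giving $\frac\eps{|z|}$-type contributions integrated to $\frac{\eps}{R}$... more precisely the far-mass potential's gradient is $O(\eps/R)$ by dominated decay), and the term $\eps^{\alpha'}R^{1+\alpha'}$ comes from the Hölder modulus of the Neumann data on the rescaled boundary at scale $R$; adding the three estimates yields $|\eps\na\P(x)|\le\frac12+C(\frac1R+\eps^{\alpha'}R^{1+\alpha'})$.

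The main obstacle is step (3)–(4): quantifying the boundary contribution. One must show that flattening the $C^{1,\alpha}$ boundary and solving the homogeneous Neumann problem for $-\eps^2\Delta+1$ produces a gradient at an interior point $x$ whose size, after rescaling by $\eps$, is controlled by $C/R + C\eps^{\alpha'}R^{1+\alpha'}$ uniformly in $E$ and $\eps$ — the $\chi_E$-independence is crucial and follows because $E$ enters only through the bounded Neumann data $g=-\na v_1\cdot n$, whose $C^{\alpha'}$ seminorm at the relevant scale is what produces the $\eps^{\alpha'}R^{1+\alpha'}$ factor, while its $L^\infty$ size $\le\frac1{2\eps}$ combined with the exponential decay of the Neumann-to-gradient kernel away from $\pa\Omega$ produces the $C/R$ factor (via $R\sim d(x,\pa\Omega)/\eps$ after rescaling, or by splitting the boundary into the near piece and the far piece). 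I expect the cleanest route is to localize: cover $\pa\Omega$, flatten each chart, and use interior/boundary Schauder estimates for $-\Delta+1$ on the rescaled domain $\Omega_\eps$ (which is "almost a half-space" at scale $R$ up to a $C^{1,\alpha}$ perturbation of size $R^{1+\alpha}$ in the rescaled variable, hence the $\eps^{\alpha'}R^{1+\alpha'}$), carefully tracking that the half-space model problem contributes exactly $\frac12$ and everything else is the stated error.
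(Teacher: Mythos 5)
Your interior argument is essentially the paper's (up to an unnecessary restriction of the data to $B_R(x)$: since $E\subset\Omega$, the remainder $\P-K_\eps\star\chi_E$ already solves the homogeneous equation in all of $\Omega$, so the rescaled interior gradient estimate in $B_R$ gives $C/R$ directly, with no far-field term to handle). The genuine gap is at the boundary. You take the model solution to be $v_1=K_\eps\star(\chi_E\chi_{B_R(x)})$ and then must solve for the correction $v_2$ with Neumann data $g=-\na v_1\cdot n$ on $\pa\Omega$. But this data is of size $O(\eps^{-1})$, i.e.\ of size $O(1)$ after the rescaling $y\mapsto y_0+\eps y$; a homogeneous equation with $O(1)$ Neumann data produces an $O(1)$ rescaled gradient at points within distance $O(\eps R)$ of $\pa\Omega$ (your proposed bound $C/d(x,\pa\Omega)$ blows up exactly there, and the "exponential decay of the Neumann-to-gradient kernel" you invoke is of no help at those points). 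So the boundary correction is not an error term and the constant $\tfrac12$ is lost precisely where the lemma is needed.

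The idea you are missing is to build a model solution that \emph{already satisfies} the Neumann condition on the tangent hyperplane: the paper takes $y_0\in\pa\Omega$ with inner normal $e_n$, forms the even reflection $F(x)=\chi_E(x',x_n)+\chi_E(x',-x_n)$, and sets $v_1^\eps=K_\eps\star F$. Then $0\le F\le 1$ wherever it matters, so Lemma \ref{lem:half} still gives $|\eps\na v_1^\eps|\le\tfrac12$, and by symmetry $\pa_{x_n}v_1^\eps=0$ on $\{x_n=0\}$. Consequently the residual Neumann data $g^\eps=-\na\bar v_1^\eps\cdot\nu^\eps$ for $u^\eps=\P-v_1^\eps$ is \emph{small}, controlled by two quantities your scheme cannot access: the tangential components of $\nu^\eps$ (of size $(\eps R)^\alpha$ by the $C^{1,\alpha}$ regularity of $\pa\Omega$) and the H\"older modulus of $\pa_{x_n}\bar v_1^\eps$ evaluated at height $|x_n|\lesssim\eps^\alpha R^{1+\alpha}$ where $\pa\Omega^\eps$ deviates from the hyperplane. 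Feeding these bounds (and the corresponding $C^\eta$ seminorm of $g^\eps$, obtained by interpolation with the Calder\'on--Zygmund bound $\|\na\bar v_1^\eps\|_{C^\sigma}\le C$) into the boundary gradient estimate for the Neumann problem yields exactly the error $C(\tfrac1R+\eps^{\alpha'}R^{1+\alpha'})$. Without the reflection your decomposition cannot produce this smallness, so the proof as planned does not close.
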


\begin{proof}[Proof of Proposition \ref{prop:Gammaconvergenceneuman}]
The statement (i) follows from Proposition \ref{prop:4}.


To prove (ii), we recall that
\begin{align*}
\eps^{-1} \int_\Omega  \chi_E (1-\P)\, dx
&  =  -\eps \int_E \Delta \P \, dx  \\
& = - \eps \int_{\pa^* E \cap\Omega} \na \P \cdot \nu_E(x)\, d\H^{n-1}(x) 
\end{align*}
so that  Lemma \eqref{lem:Pgrad} implies
$$
\eps^{-1} \int_\Omega  \chi_E (1-\P)\, dx
\leq
\left( \frac 1 2 + C\left(\frac 1 R + \eps^{\alpha'} R^{1+\alpha'} \right) \right) 
P(E,\Omega)
$$
for all $R>0$ and $\eps>0$ (with a constant $C$ independent of $E$).
For any  $0< \eta < 1-\frac t 2$, we choose $R$ such that $C/R<\frac 1 2\eta/t$ and then $\eps_0$ such that $C \eps^{\alpha'} R^{1+\alpha'}<\frac 1 2\eta/t$ for all $\eps<\eps_0$, . We then have
$$ \G_\eps(E)\geq  \left( 1-\frac t 2 -\eta\right) P(E,\Omega) \qquad \forall \eps<\eps_0$$ 
for all set $E$ with $P(E,\Omega)<\infty$.
The lower semincontinuity of the perimeter implies
$$\liminf_{\eps\to 0} \G_\eps(E_\eps) \geq  \left( 1-\frac t 2 -\eta \right) P(E,\Omega).$$
Since this holds for all $\eta>0$, the result follows.
\end{proof}

\begin{proof}[Proof of Lemma \ref{lem:Pgrad}]
We fix $R>0$ throughout the proof.
First, we prove that \eqref{eq:phigrad} holds for all $x_0$ such that $B_{R\eps}(x_0)\subset \Omega$.
For that, we consider the functions $v^\eps = K_\eps * \chi_E$ and $u^\eps=\P-v^\eps$.
Lemma \ref{lem:half} implies that $|\eps \na v^\eps(x_0)  | \leq 1/2$.
The rescaled function $\bar u^\eps = u^\eps(x_0+\eps x)$ solves
$ \bar u^\eps - \Delta \bar u^\eps=0 $ in $B_R(0)$ and satisfies $\| \bar u^\eps\| \leq 2$. We deduce that
$ |\na \bar u (0)|  \leq  \frac C R$ and therefore
$$ |\eps \na \P(x_0)| \leq |\eps \na v^\eps(x_0)  | +\frac C R \leq \frac 1 2 +\frac C R.$$

Next, we take $y_0\in \pa\Omega$ and we are going to show that \eqref{eq:phigrad} holds in $B_{R\eps} (y_0)$. Together with the argument above, this implies the lemma. This part of the proof is more delicate and must make use of the Neumann condition and of the regularity of $\pa\Omega$.
Without loss of generality, we assume that $y_0=0$ and $\nu(y_0) = -e_n$. In what follows, we use the notation $x=(x',x_n)$ with $x'\in \R^{n-1}$ and $x_n\in \R$.
We define $F(x)= \chi_E (x',x_n)+\chi_E (x',-x_n)$ the even extension of $\chi_E$ and denote 
$ v^\eps_1(x) = K_\eps * F$, which satisfies $|\eps \na v^\eps_1(x_0)  | \leq 1/2$ by Lemma \ref{lem:half}.
Note that $v^\eps_1$ solves the equation $v^\eps_1 - \eps^2 \Delta v^\eps_1 = \chi_E$ in $\{x_n>0\}$ and satisfies the Neumann boundary condition on $\{ x_n=0\}$.
The idea of the proof is to use the $C^{1,\alpha}$ regularity of $\Omega$ to say that in $B_{R\eps}$, $\Omega$ is close to the half space $\{x_n>0\}$ and that the function $u^\eps = \P - v^\eps_1$ (and its gradient) is small.

The rescaled function $\bar u^\eps(x)=u^\eps(y_0+\eps x) $ solves
\begin{equation}\label{eq:bun}
\begin{cases}
\bar u^\eps - \Delta \bar u^\eps = 0 & \mbox{ in } \Omega^\eps \\
\na \bar u^\eps \cdot \nu^\eps = g^\eps(x) & \mbox{ on } \pa\Omega^\eps
\end{cases}
\end{equation}
where
$$ g^\eps(x) = -\na \bar v_1^\eps(x)\cdot \nu^\eps (x),$$
with $\bar v_1^\eps(x)=v_1^{\eps}(y_0+\eps x)$ and $\Omega^\eps = \{ x\, ;\, y_0+\eps x\in \Omega\}$.

Gradient estimates for Neumann boundary value problems (see \cite{LADYZHENSKAYA,Koenig}) give, for $\eta>0$:
\begin{align}
\sup_{\Omega^\eps \cap B_{R}} |\na \bar u^\eps| 
& \leq \frac 1 R  \| \bar u^\eps\|_{L^\infty(\Omega^\eps \cap B_{2R})}
+ C  \| g^\eps\|_{L^\infty (\pa\Omega^\eps \cap B_{2R}) }+ R^\eta [ g^\eps]_{C^\eta (\pa\Omega^\eps \cap B_{2R}) }.
\label{eq:buepsb}
\end{align}
We have
$$\| \bar u^\eps\|_{L^\infty(\Omega^\eps \cap B_{2R})} \leq \|u^\eps\|_{L^\infty(\Omega )}\leq\|\P \|_{L^\infty(\Omega) }+\|v_1^\eps\|_{L^\infty(\Omega )}\leq 2,$$
and we can thus conclude thanks to the following Lemma:
\begin{lemma}\label{lem:geps}
For all $\sigma<1$ there exists $C>0$ such that 
$$
 \| g^\eps\|_{L^\infty (\pa\Omega^\eps \cap B_{3R}) } \leq C\left[ (\eps R)^\alpha + (\eps^\alpha R^{1+\alpha})^\sigma \right]
$$
and
$$
\| g^\eps\|_{C^\eta (\pa\Omega^\eps \cap B_{2R}) }
\leq C\left[ (\eps R)^\alpha + (\eps^\alpha R^{1+\alpha})^\sigma \right]^{1-\eta/\sigma}.
$$
\end{lemma}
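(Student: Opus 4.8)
\emph{Plan of the proof.} The strategy is to combine two facts. First, by the $C^{1,\alpha}$ regularity of $\pa\Omega$, at scale $R$ near $y_0$ the rescaled domain $\Omega^\eps$ is extremely flat: $\pa\Omega^\eps$ is a graph over the tangent plane $\{x_n=0\}$ with very small slope. Second, $\bar v_1^\eps$ is an \emph{even} function of $x_n$ — it is the convolution of the radial kernel $K_\eps$ with $F$, the reflection of $\chi_E$ across $\{x_n=0\}$ — so $\pa_{x_n}\bar v_1^\eps(x',0)=0$. Writing $g^\eps=-\na\bar v_1^\eps\cdot\nu^\eps$, smallness of $g^\eps$ will then come from two sources: $\nu^\eps$ is nearly $e_n$ on $\pa\Omega^\eps\cap B_{3R}$, and the one genuinely normal component $\pa_{x_n}\bar v_1^\eps$, which vanishes on $\{x_n=0\}$, is only ever evaluated on $\pa\Omega^\eps$, which lies in a thin slab about that plane.

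The first step is bookkeeping on the geometry (throughout one may assume $R\ge 1$ and $\eps$ small relative to $R$, the only regime used in the proof of Lemma \ref{lem:Pgrad}). Normalizing $y_0=0$, $\nu(y_0)=-e_n$, near $0$ one has $\pa\Omega=\{x_n=\gamma(x')\}$ with $\gamma\in C^{1,\alpha}$, $\gamma(0)=0$, $\na'\gamma(0)=0$, so $|\gamma(x')|\le C_\Omega|x'|^{1+\alpha}$ and $|\na'\gamma(x')|\le C_\Omega|x'|^{\alpha}$; after the $\eps$-rescaling, $\pa\Omega^\eps$ is near $0$ the graph of $\gamma^\eps(x')=\eps^{-1}\gamma(\eps x')$, and on $\{|x'|\le 3R\}$,
\[
\|\gamma^\eps\|_{L^\infty}\le C\eps^{\alpha}R^{1+\alpha},\qquad \|\na'\gamma^\eps\|_{L^\infty}\le C(\eps R)^{\alpha},\qquad [\na'\gamma^\eps]_{C^{\alpha}}\le C\eps^{\alpha};
\]
in particular $|\nu^\eps-e_n|\le C(\eps R)^{\alpha}$ on $\pa\Omega^\eps\cap B_{3R}$ and $\nu^\eps$ is there H\"older-$\alpha$ with seminorm $\le C\eps^{\alpha}$. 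On the analytic side, $\bar v_1^\eps$ is bounded and solves $\bar v_1^\eps-\Delta\bar v_1^\eps=F(y_0+\eps\,\cdot)\in L^\infty$, so interior elliptic estimates applied on unit balls give, for every $\beta\in(0,1)$, a constant $C_\beta$ independent of $\eps$ and $R$ with $\|\na\bar v_1^\eps\|_{C^{\beta}(B_{3R})}\le C_\beta$.

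The second step is the two estimates themselves. Split $g^\eps=-\pa_{x_n}\bar v_1^\eps-\na\bar v_1^\eps\cdot(\nu^\eps-e_n)$ along $\pa\Omega^\eps$. On $\pa\Omega^\eps\cap B_{3R}$, subtracting the vanishing value $\pa_{x_n}\bar v_1^\eps(x',0)$ bounds the first term by $[\na\bar v_1^\eps]_{C^{\sigma}}\,\|\gamma^\eps\|_{L^\infty}^{\sigma}\le C(\eps^{\alpha}R^{1+\alpha})^{\sigma}$ (using $\sigma<1$), and the second by $\|\na\bar v_1^\eps\|_{L^\infty}|\nu^\eps-e_n|\le C(\eps R)^{\alpha}$; hence $\|g^\eps\|_{L^\infty(\pa\Omega^\eps\cap B_{3R})}\le C\Lambda$ with $\Lambda:=(\eps R)^{\alpha}+(\eps^{\alpha}R^{1+\alpha})^{\sigma}$. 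For the $C^{\eta}$ estimate I would take $0<\eta<\min(\alpha,\sigma)$ — the only range of $\eta$ needed in \eqref{eq:buepsb} — and interpolate each piece of the splitting by the elementary inequality $[f]_{C^{\eta}}\le 2\|f\|_{L^\infty}^{1-\eta/\theta}[f]_{C^{\theta}}^{\eta/\theta}$, with $\theta=\sigma$ for the first piece and $\theta=\alpha$ for the second. The restriction of $\pa_{x_n}\bar v_1^\eps$ to the graph has $L^\infty$-norm $\le C(\eps^{\alpha}R^{1+\alpha})^{\sigma}$ and $C^{\sigma}$-seminorm $\le C$ (the graph being Lipschitz with small constant), so its $C^{\eta}$-norm on $\pa\Omega^\eps\cap B_{2R}$ is $\le C(\eps^{\alpha}R^{1+\alpha})^{\sigma-\eta}\le C\Lambda^{1-\eta/\sigma}$; and $\na\bar v_1^\eps\cdot(\nu^\eps-e_n)$ has $L^\infty$- and $C^{\alpha}$-norms on $\pa\Omega^\eps\cap B_{2R}$ both $\le C\min\{(\eps R)^{\alpha},1\}$ (the latter by the product rule together with the bounds above), so its $C^{\eta}$-norm is $\le C\min\{(\eps R)^{\alpha},1\}\le C\Lambda^{1-\eta/\sigma}$; the leftover regime $\Lambda\ge 1$, which never occurs in the application, is covered by the crude universal bound $\|g^\eps\|_{C^{\eta}}\le C$. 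Summing the two pieces gives the stated $C^{\eta}$ estimate.

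\emph{Main obstacle.} The $C^{\eta}$ estimate is the delicate part. One must weave together the $C^{1,\alpha}$ regularity of $\pa\Omega$ (which controls $\nu^\eps$ and $\gamma^\eps$), the interior $C^{1,\beta}$ bound for $\bar v_1^\eps$ valid for every $\beta<1$ (which is what lets $\sigma$ be taken close to $1$), and the identity $\pa_{x_n}\bar v_1^\eps(x',0)=0$, into a single H\"older interpolation in which the powers of \emph{both} $R$ and $\eps$ must be tracked so that they recombine exactly into $\Lambda^{1-\eta/\sigma}$ — in particular the flatness bound $\|\gamma^\eps\|_{L^\infty}\le C\eps^{\alpha}R^{1+\alpha}$ is precisely what makes the normal-derivative piece as small as $(\eps^{\alpha}R^{1+\alpha})^{\sigma}$.
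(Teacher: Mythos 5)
Your proof is correct and follows essentially the same route as the paper's: the uniform $C^\sigma$ bound on $\na \bar v_1^\eps$ from elliptic (Calder\'on--Zygmund) estimates, the vanishing of $\pa_{x_n}\bar v_1^\eps$ on $\{x_n=0\}$ combined with the $O(\eps^\alpha R^{1+\alpha})$ flatness of the rescaled boundary, the $O((\eps R)^\alpha)$ closeness of $\nu^\eps$ to $e_n$, and H\"older interpolation between $L^\infty$ and a higher H\"older norm for the $C^\eta$ bound. The only real difference is that you interpolate each piece of the decomposition separately with its own exponent ($\sigma$ for the normal-derivative term, $\alpha$ for the $\nu^\eps-e_n$ term), whereas the paper interpolates $g^\eps$ globally between $L^\infty$ and $C^\sigma$; your version is if anything slightly more careful, since the paper's global $\|g^\eps\|_{C^\sigma}\leq C$ tacitly requires $\nu^\eps$ to be $C^\sigma$ while the hypothesis only gives $C^\alpha$.
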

Lemma \ref{lem:geps} together with \eqref{eq:buepsb}   yields
$$ \sup_{\Omega^\eps \cap B_{R}} |\na \bar u^\eps| \leq C\left(\frac 1 R + \eps^{\alpha'} R^{1+\alpha'} \right)
$$
for some $\alpha'=\alpha (\sigma-\eta) <\alpha$.
Since $|\na v_1^\eps|\leq 1/2$, we deduce:
$$ 
\sup_{\Omega \cap B_{R\eps}} |\eps \na \P | \leq\frac 1 2 +  C\left(\frac 1 R + \eps^{\alpha'} R^{1+\alpha'} \right)
$$
which complete the proof \eqref{eq:phigrad}.
\end{proof}

\begin{proof}[Proof of Lemma \ref{lem:geps}]
We recall that $ g^\eps(x) = -\na \bar v_1^\eps(x)\cdot \nu^\eps (x)$ where 
$\bar v^\eps_1(x) = K * \bar F$. Calder\'on-Zygmund's estimates gives $\|  \bar v^\eps_1(x) \|_{W^{2,p}(B_{3R})} \leq C$ for all $p<\infty$ 
and so $\|\na \bar v^\eps_1(x)\|_{C^\sigma(B_{3R})} \leq C $.
We note that
$$g^\eps(x) = -\na_{x'} \bar v_1^\eps(x)\cdot \nu^\eps_{x'} (x) - \pa_{x_n} \bar v_1^\eps(x)\nu_n^\eps (x)
$$
where the $C^{1,\alpha}$ regularity of $\Omega$ implies
$$ 
|  {\nu^\eps}' (x) | \leq C (\eps R)^\alpha \quad \mbox{ in } B_{\eps R}(y_0) 
$$
while the fact that $\pa_{x_n}  v^\eps_1=0$ on $\{x_n=0\}$ yields:
$$ 
|\pa_{x_n} \bar v_1^\eps(x)|\leq C|x_n|^\sigma \leq C(\eps^\alpha R^{1+\alpha} )^\sigma.
$$
The first estimate in Lemma \ref{lem:geps} follows.
The second estimate then follows from the interpolation inequality
$$
\| g^\eps\|_{C^\eta} \leq C \| g^\eps \|_{C^\sigma}^{\eta/\sigma} \| g^\eps\| _{L^\infty} ^{1-\eta/\sigma} 
\leq C \|\na \bar v^\eps_1(x)\|_{C^\sigma(B_{3R})}^{\eta/\sigma} \| g^\eps\| _{L^\infty} ^{1-\eta/\sigma} .
$$

\end{proof}

\appendix

\section{An interpolation inequality}
We recall that 
$$
P_s^L(E,\Omega) 
:= \int_{\Omega} \int_{\Omega} \frac{ \chi_E (x) \chi_{\C E}(y)}{|x-y|^{n+s}} \, dx \, dy
= \frac 1 2 \int_{\Omega} \int_{\Omega} \frac{ |\chi_E (x)- \chi_{E}(y)|}{|x-y|^{n+s}} \, dx \, dy.
$$
We will prove that for any function $u\in W^{1,1}(\Omega)$ we have
\begin{equation}\label{eq:uu}
\int_{\Omega} \int_{\Omega} \frac{ |u(x)- u(y)|}{|x-y|^{n+s}} \, dx \, dy
\leq 
\frac{n\omega_n 2^{1-s}}{s(1-s)} \| u\|_{L^1(\Omega)}^{1-s} \left( \int_{\mathrm {Conv}(\Omega) } |\na u|\, dx\right)^s
\end{equation}
so the result follows by a density argument.\\
We split the integral in the right hand side between $\{(x,y)\in \Omega^2\, ;\, |x-y|\geq R\}$ and $\{(x,y)\in \Omega^2\, ;\, |x-y|\leq R\}$.
For the first one, we write
\begin{align*}
 \int\int_{\{(x,y)\in \Omega^2\, ;\, |x-y|\geq R\} }   \frac{ |u (x)- u(y)|}{|x-y|^{n+s}} \, dx \, dy
&  \leq  \int\int_{\{(x,y)\in \R^{2n}\, ;\, |x-y|\geq R\} }   \frac{|u (x)|+ |u(y)|}{|x-y|^{n+s}} \, dx \, dy \\
& \leq 2n\omega_n\| u\|_{L^1(\Omega)}\, dx  \frac{R^{-s}}{s}.
\end{align*}
For the second integral, we write
\begin{align*}
&  \int\int_{\{(x,y)\in \Omega^2\, ;\, |x-y|\leq R\} }   \frac{ |u (x)- u(y)|}{|x-y|^{n+s}} \, dx \, dy\\
& 
= \int\int_{\{(x,y)\in \Omega^2\, ;\, |x-y|\leq R\} }   \frac{ 1}{|x-y|^{n+s}} \int_0^1 |\na u (tx+(1-t)y) \cdot (x-y)|\, dt \, dx \, dy\\
& 
\leq \int_0^1 \int_\Omega \int_{\{z\in \Omega_t(y)\, ;\, |z-y|\leq Rt\} }   \frac{ t^{s-1}}{|z-y|^{n-1+s}}  |\na u(z) | \, dz \, dy\, dt
\end{align*}
where we did the change of variable $ z= tx+(1-t)y$, so that $\Omega_t(y) = \{ z = tx+(1-t)y\,;\, x\in \Omega\}$. 
It is readily seen that for $t\in[0,1]$ and $y\in\Omega$, we have $\Omega_t(y)  \subset \mathrm {Conv}(\Omega) $ the convex hull of $\Omega$.
We deduce:
\begin{align*}
&  \int\int_{\{(x,y)\in \Omega^2\, ;\, |x-y|\leq R\} }   \frac{ |u (x)-u(y)|}{|x-y|^{n+s}} \, dx \, dy\\
& 
\leq \int_0^1 \int_{\mathrm {Conv}(\Omega)}  \int_{\{y\in \Omega\, ;\, |z-y|\leq Rt\} }   \frac{ t^{s-1}}{|z-y|^{n-1+s}}  |\na u(z)| \, dy \, dz\, dt\\
& 
\leq \int_{\mathrm {Conv}(\Omega)}|\na u(z)|\, dz  \int_0^1  \int_{\{|z-y|\leq Rt\} }   \frac{ t^{s-1}}{|z-y|^{n-1+s}} \, dy \, dt\\
& 
\leq 
n\omega_n \frac{R^{1-s}}{1-s}
\int_{\mathrm {Conv}(\Omega)}|\na u(z)|\, dz.
\end{align*}
We deduce 
$$ 
\int_{\Omega} \int_{\Omega} \frac{ |u(x)- u(y)|}{|x-y|^{n+s}} \, dx \, dy
 \leq 2n\omega_n \| u\| _{L^1(\Omega)}  \frac{R^{-s}}{s}+ n\omega_n \frac{R^{1-s}}{1-s}
\int_{\mathrm {Conv}(\Omega)}|\na u (z)|\, dz
$$
and \eqref{eq:uu} follows by taking $R=\frac{2\| u\| _{L^1(\Omega)} }{\int_{\mathrm {Conv}(\Omega)}|\na u(z)|\, dz}$.

\bibliographystyle{siam}
\bibliography{perimeter}

\end{document}